\def\namedlabel#1#2{\begingroup
    #2%
    \def\@currentlabel{#2}%
    \phantomsection\label{#1}\endgroup
}
\DeclarePairedDelimiter\ev{\langle}{\rangle}
\renewcommand\[{\begin{equation}}\renewcommand\]{\end{equation}} 
\renewcommand\epsilon\varepsilon 
\renewcommand\phi\varphi 
\newenvironment{customthm}[1]
  {\innercustomthm}
  {\endinnercustomthm}
\newcommand\NNN{\mathbb{N}} 
\newcommand\NN{\mathcal{N}}
\newcommand\TT{\mathcal{T}}
\newcommand\ZZ{\mathbb{Z}} 
\newcommand\ab\allowbreak 
\newcommand\VV{\mathcal{V}}
\newcommand\Spec{\operatorname{Spec}}
\newcommand\HI{\mathbf{HI}}
\newcommand\colim{\operatorname{colim}}
\newcommand\Hom{\operatorname{Hom}}
\newcommand\Tr{\operatorname{Tr}}
\newcommand\coker{\operatorname{coker}}
\newcommand\res{\operatorname{res}}
\newcommand\cores{\operatorname{cores}}
\newcommand\Id{\operatorname{Id}}
\newcommand\MW{\mathfrak{M}^{\operatorname{MW}}}
\newcommand\DM{\operatorname{DM}}
\newcommand\CCC{\mathcal{C}}
\newcommand\DMt{\widetilde{\operatorname{DM}}}
\newcommand\Flag{\operatorname{Flag}}
\newcommand\HM{\bold{HM}}
\newcommand\pizero{\underline{\pi}_0}
\newcommand\Sm{\operatorname{Sm}_k}
\newcommand\eeta{\boldsymbol{\eta}}
\newcommand\Om{\operatorname{\Omega}}
\newcommand\Ab{\mathcal{A}b}
\newcommand\KW{\underline{\operatorname{K}}^{W}}
\newcommand\KO{\underline{\operatorname{KO}}}
\newcommand\KMW{\underline{\operatorname{K}}^{MW}}
\newcommand\EEE{\hat{\mathbb{E}}}
\newcommand\VVV{\mathbb{V}}
\newcommand\EE{\mathbb{E}}
\newcommand\MMM{\hat{\mathbb{M}}}
\newcommand\MM{\mathbb{M}}
\newcommand\AAA{\mathbb{A}}
\newcommand\Tho{\operatorname{Th}}
\newcommand\rk{\operatorname{rk}}
\providecommand{\keywords}[1]
{
  \small	
  \textbf{\textit{Keywords---}} #1
}
\providecommand{\Codes}[1]
{
  \small	
  \textbf{\textit{MSC---}} #1
}
\newcommand\kMW{\mathbf{K}^{\text{MW}}}
\newcommand\LL{\mathcal{L}}
\newcommand\un{\mathbbm{1}}
\newcommand\Gm{\mathbb{G}_m}
\newcommand\SH{\mathbf{SH}}
\newcommand\PP{\mathbb{P}}
\newcommand\codim{\operatorname{codim}}
\theoremstyle{definition} 
\newtheorem{Def}{Definition}[subsection] 
\theoremstyle{plain} 
\newtheorem{Pro}[Def]{Proposition} 
\newtheorem{Lem}[Def]{Lemma} 
\newtheorem{The}[Def]{Theorem} 
\newtheorem{Cor}[Def]{Corollary} 
\theoremstyle{remark} 
\newtheorem{Exe}[Def]{Example} 
\newtheorem{Rem}[Def]{Remark} 
\newtheorem{Par}[Def]{} 
\title{Morel homotopy modules and Milnor-Witt cycle modules} 
\author{\sc Niels FELD\footnote{Adress: Institut Fourier, 100 Rue des Mathématiques, Grenoble, France.}
\footnote{E-mail adress: <niels.feld@univ-grenoble-alpes.fr>.} 
\footnote{Webpage: https://nielsfeld.wixsite.com/website} } 
\date{2019} 
\begin{document} 

\maketitle 


\begin{abstract} We study the cohomology theory and the canonical Milnor-Witt cycle module associated to a motivic spectrum. We prove that the heart of Morel-Voevodsky stable homotopy category over a perfect field (equipped with its homotopy t-structure) is equivalent to the category of Milnor-Witt cycle modules, thus generalising Déglise's thesis. As a corollary, we recover a theorem of Ananyevskiy and Neshitov and we prove that the Milnor-Witt K-theory groups are birational invariants.
\end{abstract}

\keywords{Cycle modules, Milnor-Witt K-theory, Chow-Witt groups, A1-homotopy}

\Codes{14C17, 14C35, 11E81}
\tableofcontents

\section{Introduction}


In the fundamental paper \cite{Rost96}, Rost introduced the notion of a cycle module. The idea was to find a good axiomatization of the main properties encountered in the study of Milnor K-theory, Quillen K-theory or Galois cohomology. According to \cite{Rost96}, a cycle module $M$ over a perfect field $k$ is the data of a $\ZZ$-graded abelian group $M(E)$ for every finitely generated field extension $E/k$, equipped with restriction maps, corestriction maps, a Milnor K-theory module action and residue maps $\partial$. Moreover, these data are subject to certain compatibility relations $(r1a),\dots, (r3e), (fd)$ and $(c)$. The theory results in the construction of Gersten type complexes whose cohomology groups are called {\em Chow groups with coefficients} and can be used, for instance, to extend to the left the localization sequence of Chow groups associated with a closed embedding.
\par In order to construct the derived category of motives $\DM(k,\ZZ)$, Voevodsky introduced the so-called homotopy sheaves (with transfers) which are homotopy invariant Nisnevich sheaves with transfers. One important example is given by $\Gm$, the sheaf of global units. Voevodsky proved that any homotopy sheaf $F$ has a Gersten resolution, implying that $F$ is determined in some sense by the data of its fibers in every function fields. This statement was made more precise in Déglise's thesis: the heart of $\DM(k,\ZZ)$ with respect to its homotopy t-structure has a presentation given by the category of Rost cycle modules over $k$.
\par Morel's point of view on the heart of $\DM(k,\ZZ)$ is given by the category of oriented homotopy modules. We recall that a homotopy module is a strictly $\AAA^1$-invariant Nisnevich sheaf with an additional structure defined over the category of smooth schemes (see Definition \ref{DefHomotopyModules}); it is called {\em oriented} when the Hopf map $\eeta$ acts on it by $0$. Déglise's theorem proves that oriented homotopy modules form a subcategory of the category of homotopy modules which is equivalent to the category of Rost cycle modules. Morel's natural conjecture \cite[Remark 2.49]{Mor12} was that there is a presentation of the heart of the stable homotopy category $\SH(k)$ (or equivalently, the category of homotopy modules) in terms of some non-oriented version of cycle modules.

\subsection{Current work}
In \cite{Fel18}, we introduced the theory of Milnor-Witt cycle modules, generalising the work of Rost \cite{Rost96} on cycle modules and Schmid's thesis 	\cite{Schmid98}.	Indeed, we have studied	 general cycle complexes $C^*(X,M,\VV_X)$ and their (co)homology groups $A^*(X,M,\VV_X)$ (called {\em Chow-Witt groups with coefficients}) in a quadratic	setting over a perfect base field of any characteristic. The general coefficient systems $M$ for these complexes are called Milnor-Witt cycle modules. The main example of such a cycle module is given by Milnor-Witt K-theory (see \cite[Theorem 4.13]{Fel18}); other examples will be deduced from Theorem \ref{ThmDeg} or Theorem \ref{AdjunctionTheoremBis} (e.g. the representability of hermitian K-theory in $\SH(k)$ will lead to a MW-cycle module, associated with hermitian  K-theory). A major difference with Rost's theory is that the grading to be considered is not $\ZZ$ but the category of virtual bundles (or, equivalently, the category of virtual vector spaces), where a virtual bundle $\VV$ is, roughly speaking, the data of an integer $n$ and a line bundle $\mathcal{L}$ (see \cite[Appendix A]{Fel18}). Intuitively, Milnor-Witt cycle modules are given by (twisted) graded abelian groups equipped with extra data (restriction, corestriction, $\KMW$-action and residue maps).
\par For any scheme $X$, any virtual bundle $\VV_X$ and any Milnor-Witt cycle module $M$, we have proved that there exists a complex $C^*(X,M,\VV_X)$ equipped with pushforwards, pullbacks, a Milnor-Witt K-theory action and residue maps satisfying standard functoriality properties. A fundamental theorem is that the associated cohomology groups $A^*(X,M,\VV_X)$ satisfy the homotopy invariance property (see \cite[Theorem 9.4]{Fel18}).
\par In this paper, we prove that Milnor-Witt cycle modules are closely related to Morel's $\AAA^1$-homotopy theory: they can be realized geometrically as elements of the stable homotopy category. Precisely, we prove the following theorem.
\begin{customthm}{1}[Theorem \ref{ThmDeg}]
Let $k$ be a perfect field. The category of Milnor-Witt cycle modules is equivalent to the heart of Morel-Voevodsky stable homotopy category (equipped with the homotopy t-structure):
\begin{center}

$ \mathfrak{M}^{MW}_k \simeq {\SH(k)}^\heartsuit$.
\end{center}

\end{customthm}
In order to prove this theorem, we study the cohomology theory associated with a motivic spectrum. This notion is naturally dual to the bivariant theory developed in \cite{DJK18} and recalled in Section \ref{BivTheory} (see Theorem \ref{Duality}). A motivic spectrum $\EE$ leads to a functor $\EEE$ from the category of finitely generated fields over $k$ to the category of graded abelian groups (be careful that the grading is not $\ZZ$ but is given by the category of virtual vector spaces). We prove that the functor $\EEE$ is a Milnor-Witt cycle premodule (see \cite[Definition 3.1]{Fel18}). Indeed, most axioms are immediate consequences of the general theory \cite{DJK18}. Moreover, in Theorem \ref{RamificationGeom} we prove a ramification theorem of independent interest that can be applied to prove rule \ref{itm:R3a}. Furthermore, we check axioms \ref{itm:FD} and \ref{itm:C} so that $\EEE$ is a Milnor-Witt cycle module. These two axioms follow from the study of a spectral sequence defined in Section \ref{HMandMW} (see Theorem \ref{DifferentialsComputation}); another -- more elementary -- proof may result from an adaptation of \cite[Theorem 2.3]{Rost96} to the context of Milnor-Witt cycle modules but this method would rely heavily on the fact that the base field is perfect.
\par In Section \ref{EquivalenceOfCat}, we construct a homotopy module for any Milnor-Witt cycle module and proceed to prove that the heart of the stable homotopy category (which is known to be equivalent to the category of homotopy module) is equivalent to the category of Milnor-Witt cycle modules.
\par This result generalizes Déglise's thesis (see Theorem \ref{theseDeglise}) and answers affirmatively an old conjecture of Morel (see \cite[Remark 2.49]{Mor12}). An important corollary is the following result (which was proved independently by Ananyevskiy and Neshitov in \cite[Theorem 8.12]{Neshitov2018}):
\begin{customthm}{2}[Theorem \ref{ThmAnaNeshi}]
The heart of Morel-Voevodsky stable homotopy category is equivalent to the heart of the category of MW-motives \cite{DegFas18} (both equipped with their respective homotopy t-structures):
\begin{center}
${\SH(k)}^\heartsuit \simeq {\DMt(k)}^\heartsuit$.

\end{center}
\end{customthm}

Finally, we give an application of our theory to birational questions:
\begin{The}[Theorem \ref{BirInv}]
	Let $X$ be a proper smooth integral scheme over $k$, let $\VV_k$ a virtual vector bundle over $k$ and let $M$ be a Milnor-Witt cycle module. Then the group $A^0(X,M,-\Om_{X/k}+\VV_X)$ is a birational invariant of $X$ in the sense that, if $X\dashrightarrow Y $ is a birational map, then there is an isomorphism of abelian groups
\begin{center}
$A^0(Y,M,-\Om_{Y/k}+\VV_Y) \to A^0(X,M,-\Om_{X/k}+\VV_X)$.
\end{center}
In particular for $M=\KMW$, we obtain the  fact that the Milnor-Witt K-theory groups $\kMW_n$ are birational invariants.
\par Moreover, if $F\in \HI(k)$ be a homotopy sheaf, then $F(X)$ is a birational invariant.
\end{The}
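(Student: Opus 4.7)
The plan is to identify $A^0(X,M,-\Om_{X/k}+\VV_X)$ with a manifestly birational ``unramified'' subgroup of the field-theoretic group $M(k(X),-\Om_{k(X)/k}+\VV_{k(X)})$. First, the Gersten-type cycle complex underlying $A^*$ provides an injection
\[
A^0(X,M,-\Om_{X/k}+\VV_X)\hookrightarrow M(k(X),-\Om_{k(X)/k}+\VV_{k(X)})
\]
whose image is contained in $\bigcap_{x\in X^{(1)}}\ker(\partial_x)$, the intersection of the kernels of residues at codimension one points of $X$.

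The heart of the argument is to upgrade this containment to an equality with the full unramified subgroup
\[
M_{nr}(k(X)/k):=\bigcap_{v}\ker(\partial_v),
\]
where now $v$ ranges over \emph{all} rank one discrete valuations on $k(X)$ trivial on $k$. Given $\alpha\in A^0(X,M,-\Om_{X/k}+\VV_X)$ and such a $v$, I would apply the valuative criterion of properness to $X\to\Spec k$: the generic point inclusion $\Spec k(X)\hookrightarrow X$ extends uniquely to a $k$-morphism $\iota_v:\Spec\OO_v\to X$. Since $\OO_v$ is a DVR essentially smooth over $k$, the Nisnevich sheaf $F$ attached via Theorem \ref{ThmDeg} to the homotopy module corresponding to $M$ can be evaluated on $\Spec\OO_v$, and the one-dimensional Gersten complex identifies $F(\Spec\OO_v)$ with $\ker(\partial_v)$ inside $M(k(X),\ldots)$. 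Since $\iota_v^*\alpha$ restricts to $\alpha$ at the generic point, this forces $\partial_v(\alpha)=0$. The reverse inclusion $M_{nr}(k(X)/k)\subseteq A^0(X,M,-\Om_{X/k}+\VV_X)$ is automatic, as each $x\in X^{(1)}$ defines a rank one discrete valuation trivial on $k$.

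Birational invariance then follows formally: the subgroup $M_{nr}(k(X)/k)$ depends only on the field extension $k(X)/k$ together with $M$ and $\VV_k$, since the universal twist $-\Om_{k(X)/k}+\VV_{k(X)}$ is intrinsic to $k(X)/k$. For a birational map $X\dashrightarrow Y$, the isomorphism $k(Y)\xrightarrow{\sim}k(X)$ yields the desired isomorphism $A^0(Y,M,-\Om_{Y/k}+\VV_Y)\xrightarrow{\sim}A^0(X,M,-\Om_{X/k}+\VV_X)$. Specialising to $M=\KMW$ recovers the birational invariance of the Milnor-Witt K-theory groups $\kMW_n$; for a homotopy sheaf $F\in\HI(k)$, Theorem \ref{ThmDeg} realises $F$ as the sheaf associated to a Milnor-Witt cycle module, reducing the final claim to the case already handled. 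The main obstacle lies in the second step, passing from codimension one unramifiedness on $X$ to unramifiedness at all rank one valuations of $k(X)/k$: it relies crucially on combining the valuative criterion of properness with the sheaf-theoretic interpretation of $A^0$ afforded by the main Theorem \ref{ThmDeg}, together with careful tracking of the twist $-\Om_{X/k}$ along $\iota_v$, following the classical unramified cohomology template of Colliot-Thélène and Rost.
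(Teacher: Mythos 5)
Your approach is genuinely different from the paper's, and it is the classical Colliot-Th\'el\`ene--Rost template: you aim to identify $A^0(X,M,-\Om_{X/k}+\VV_X)$ with the \emph{full} unramified subgroup $\bigcap_v\ker\partial_v$ of $M(k(X),-\Om_{k(X)/k}+\VV_{k(X)})$ over all geometric discrete rank-one valuations of $k(X)/k$, which is intrinsic to the function field, and then deduce birational invariance as a formality. The paper does something logically weaker and more elementary. It never proves the unramified characterisation; instead it records that the localisation exact sequence gives two facts about $F^M(X)=A^0(X,M,-\Om_{X/k}+\VV_X)$ -- restriction to a Zariski open is injective, and restriction across a closed subset of codimension $\geq 2$ is an isomorphism -- and then uses the standard fact that a birational map of smooth proper schemes is defined in codimension one to build $\Phi_*$ and $(\Phi^{-1})_*$ and check they are mutually inverse via the injectivity onto a small common open. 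This is the argument of \cite[Lemma~1.3]{Voi19}. Both proofs ultimately rest on the valuative criterion of properness, but they apply it at different places: you use it to extend the generic point along every geometric DVR of $k(X)$; the paper uses it only implicitly, through the codimension-one extension of the rational map.

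The trade-off is real. Your route gives a stronger structural statement of independent interest (and is what Rost himself proves in the oriented setting), but it requires more verification than you supply: that $\OO_v$ is essentially smooth over $k$ for every geometric DVR of $k(X)/k$; that the presheaf restriction $F^M(X)\to F^M(\Spec\OO_v)$ exists for the not-necessarily-flat $\iota_v$ and commutes with the inclusion into the generic fibre; and that the twist $-\Om$ restricts compatibly, with $F^M(\Spec\OO_v)$ computing $\ker\partial_v$. The paper explicitly flags that Rost's original argument requires flat pullbacks, which are not available for MW-cycle modules, and therefore chooses the Voisin-style route precisely to avoid these issues; your proposal circumvents flatness by appealing to the presheaf structure on $\Sm_k$ supplied by Theorem \ref{HomotopyModuleFM}, which is a legitimate workaround but shifts the burden to the compatibilities just listed. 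One small correction on the last claim: for $F\in\HI(k)$ the paper does not reduce to the MW-cycle-module case via $\sigma^{\infty}$; a homotopy sheaf is not a homotopy module and $F(X)$ is not a priori of the form $A^0(X,\sigma^{\infty}F,\ast)$. The paper instead cites \cite[Cor.~6.4.6]{Mor05} to get the two localisation properties for $F$ directly and reruns the same formal argument.
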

We hope that the `quadratic' nature of these new invariants could lead to more refined theorems in the domain.
\subsection{Outline of the paper}

In Section \ref{BivTheory}, we follow \cite{DJK18} and define the bivariant theory associated to a motivic spectrum. We extend the main results for the associated cohomology theory. We study the basic properties of fundamental classes and prove a ramification formula.
\par In Section \ref{HMandMW}, we recall the theory of Milnor-Witt cycle modules developed in \cite{Fel18}. For any motivic spectrum, we then construct a Milnor-Witt cycle modules in a functorial way. 
\par The heart of the paper is Section \ref{EquivalenceOfCat} where we define a homotopy module for any Milnor-Witt cycle module and prove our main theorem: the heart of the stable homotopy category (which is known to be equivalent to the category of homotopy module according to \cite{Mor03}) is equivalent to the category of Milnor-Witt cycle modules.
\par Finally in Section \ref{Applications}, we give some corollaries of the main theorem. In particular, we show that the heart of stable homotopy category is equivalent to the heart of the category of MW-motives \cite{DegFas18}. We also prove that Milnor-Witt K-theory groups and homotopy sheaves are birational invariants of smooth projective $k$-schemes.

\subsection{Notation}\label{Conventions}
Throughout the paper, we fix a (commutative) field $k$ and we assume moreover that $k$ is perfect (of arbitrary characteristic). We consider only schemes that are noetherian and essentially of finite type\footnote{That is, isomorphic to a limit of finite type schemes with affine \'etale transition maps.} over $k$. All schemes and morphisms of schemes are defined over $k$.
\par We denote by $S=\Spec k$ the spectrum of $k$.
\par By a field $E$ over $k$, we mean {\em a $k$-finitely generated field $E$}. Since $k$ is perfect, notice that $\Spec E$ is essentially smooth over $S$.
\par Let $f:X\to S$ be a morphism of schemes and $\VV_S$ be a virtual bundle over $S$. We denote by $\VV_X$ or by $f^*\VV_S$  or by $\VV_S\times_S X$ the pullback of $\VV_S$ by $f$.
\par Let $f:X\to Y$ be a morphism of schemes. Denote by $\LL_f$ or by $\LL_{X/Y}$ the virtual vector bundle over $Y$ associated with the cotangent complex of $f$. If $p:X\to Y$ is a smooth morphism, then $\LL_p$ is (isomorphic to) $\TT_p=\Om_{X/Y}$ the space of (Kähler) differentials. If $i:Z\to X$ is a regular closed immersion, then $\LL_i$ is the normal cone $-\NN_ZX$. If $f$ is the composite $\xymatrix{ Y \ar[r]^i & \PP^n_X \ar[r]^p & X}$  with $p$ and $i$ as previously (in other words, if $f$ is lci projective), then $\LL_f$ is isomorphic to the virtual tangent bundle $i^*\TT_{\PP^n_X/X} - \NN_Y(\PP^n_X) $ (see also \cite[Section 9]{Fel18}).
\par Let $E$ be a field (over $k$) and $v$ a (discrete) valuation on $E$. We denote by $\mathcal{O}_v$ its valuation ring, by $\mathfrak{m}_v$ its maximal ideal and by $\kappa(v)$ its residue class field. We consider only valuations of geometric type, that is we assume: $k\subset \mathcal{O}_v$, the residue field $\kappa(v)$ is finitely generated over $k$ and satisfies $\operatorname{tr.deg}_k(\kappa(v))+1=\operatorname{tr.deg}_k(E)$.
\par For $E$ a field (resp. $X$ a scheme), we denote by $\ev{n}$ the virtual space $\AAA^n_E$ (resp. $\AAA^n_X$).

\section{Bivariant theory} \label{BivTheory}
\subsection{Recollection and notations}

In this subsection, we recall some results from \cite[§2]{DJK18}. Let $S$ be a base scheme. Denote by $\VVV(S)$ the Picard groupoid of virtual vector bundles on $S$ (see \cite[§4]{Deligne87} or \cite[Appendix A]{Fel18}). If $\VV_S$ is a virtual vector bundle over $S$, we denote by $\Tho_S(\VV_S)$ its associated Thom space (this is an $\otimes$-invertible motivic spectra over $S$, see \cite[Remark 2.4.15]{CD12}).

\begin{Def} \label{DefBivTheory}

Let $\EE\in \SH(S)$ be a motivic spectrum. Given a separated morphism of finite type $p:X\to S$, an integer $n\in \ZZ$ and a virtual bundle $\VV_X\in \VVV(X)$, we define the {\em bivariant theory} of $X/S$ in bidegree $(n,\VV_X)$, with coefficients in $\EE$, as the abelian group:
\begin{center}
$\EE_n(X/S,\VV_X)=[\Tho_X(\VV_X)[n],p^!(\EE)]=[p_!\Tho_X(\VV_X)[n],\EE]$.
\end{center}
The {\em cohomology theory} represented by $\EE$ is defined by the formula:
\begin{center}

$\EE^n(X,\VV_X)=\EE_{-n}(X/X,-\VV_X)=[\un_X, \EE_X\otimes \Tho_X(\VV_X)[n]]$
\end{center}
for any scheme $X$ over $S$ and any pair $(n,\VV_X)\in \ZZ\times \VVV(X)$.
\par In the special case where $\EE=\un$ is the sphere spectrum, we will use the notation
\begin{center}

$H_n(X/S,\VV_X)=\un_n(X/S,\VV_X)=[\Tho_X(\VV_X)[n],p^!(\un_S)]$
\end{center}
and we will refer to this simply as the {\em bivariant $\AAA^1$-theory}.
\par Similarly, we set $H^n(X,\VV_X)=\un^n(X,\VV_X)$ and refer to this as the {\em $\AAA^1$-cohomology}.

\end{Def}

\begin{Par} \label{BaseChangeBiv}
{\sc base change.} For any cartesian square
  \begin{center}

  $\xymatrix{
  Y \ar[r]^g \ar[d]_q \ar@{}[rd]|-\Delta  & X \ar[d]^p \\
  T \ar[r]_f & S,
  }$
  \end{center}
 one gets a map
 \begin{center}
 
 $\Delta^*:\EE_n(X/S,\VV_X)\to \EE_n(Y/T, \VV_T)$
 \end{center}
 by applying the functor $g^*:\SH(X)\to \SH(Y)$ and using the exchange transformation ${\operatorname{Ex}^{*!}:g^*p^! \to q^! f^*}$ associated with the square $\Delta$.
\end{Par}

\begin{Par}{\sc Covariance for proper morphisms.} \label{covarianceBiv}
Let $f:Y\to X$ be a proper morphism. We have a map
\begin{center}

$f_*:\EE_n(Y,\VV_Y)\to \EE_n(X,\VV_X)$

\end{center}
coming from the unit map $f_!f^!\to \Id$ and the fact that $f_!=f_*$ since $f$ is proper.
\end{Par}
\begin{Par}
{\sc Contravariance for étale morphisms.} Let $f:Y\to  X$ be an étale morphism, we have a map
\begin{center}

$f^*:\EE_n(X/S,\VV_X)\to \EE_n(Y/S,\VV_Y)$
\end{center}
obtained by applying the functor $f^!:\SH(X)\to \SH(S)$ and using the purity isomorphism $f^!=f^*$ as $f$ is étale.
\end{Par}

\begin{Par}{\sc Products.} \label{BivProduct}
Consider a multiplication map $\mu:\EE \otimes \EE' \to \EE''$ between motivic spectra. For any s-schemes $\xymatrix{ Y \ar[r]^q & X \ar[r]^p & S}$, any integers $n,m$ and any virtual vector bundles $\mathcal{W}_Y/Y$ and $\VV_X/X$, there is a multiplication map
\begin{center}

$\EE_m(Y/X,\mathcal{W}_Y)\otimes \EE'_n(X/S,\VV_X) \to \EE''_{m+n}(Y/S,\mathcal{W}_Y+\VV_Y)$.
\end{center}
\end{Par}
 
\begin{Def}

Let $X \to S$ be a separated morphism of finite type.
\begin{itemize}
\item A {\em fundamental class}\footnote{Also called {\em orientation} in \cite[Definition 2.3.2]{DJK18}.} of $f$ is an element
\begin{center}

$\eta_f \in H_0(X/S,\VV_f)$
\end{center}
for a given virtual vector bundle $\VV_f$ over $X$.
\item Let $\mathcal{C}$ be a subcategory of the category of (quasi-compact and quasi-separated) schemes. A {\em system of fundamental classes} for $\mathcal{C}$ is the data, for each morphism $f:X\to Y$ in $\mathcal{C}$, of a virtual bundle $\VV_f\in \mathbb{V}(X)$ and an orientation $\eta^{\mathcal{C}}_f\in H_0(f,\VV_f)$ such that the following relations hold:
\begin{enumerate}
\item {\em Normalisation.} If $f=\Id_S$, then $\VV_f=0$ and the orientation $\eta^{\mathcal{C}}_f\in H_0(\Id_S,0)$ is given by the identity $\Id:\un_S\to \un_S$.
\item {\em Associativity formula.} For any composable morphisms $f$ and $g$ in $\mathcal{C}$, one has an isomorphism:
\begin{center}

$\VV_{f\circ g}\simeq \VV_g+g^*\VV_f$
\end{center}
and, modulo this identification, the following relation holds:
\begin{center}

$\eta^\mathcal{C}_g.\eta^\mathcal{C}_f=\eta^\mathcal{C}_{f\circ g}$
\end{center}
\end{enumerate}
\item Suppose the category $\mathcal{C}$ admits fibred products. We say that a system of fundamental classes $(\eta^\mathcal{C}_f)_f$ is {\em stable under transverse base change} if it satisfies the following condition: for any cartesian square
\begin{center}

$\xymatrix{ 
Y \ar[r]^g \ar[d]_q \ar@{}[rd]|-{\Delta} & T \ar[d]^p \\
X \ar[r]_f & S.}$
\end{center}
such that $f$ and $g$ are in $\mathcal{C}$ and $p$ is transverse to $f$, then one has $\VV_g=q^*\VV_f$ and the following formula holds in $H_0(g,\VV_g)$: $\Delta^*(\eta^\mathcal{C}_f)=\eta^\mathcal{C}_g$.
\end{itemize}
\end{Def}
In \cite[Thoerem 3.3.2]{DJK18}, the authors prove the following theorem.
\begin{The}

There exists a unique system of fundamental classes $\eta_f\in H_0(X/S,\LL_f)$ associated with the class of quasi-projective lci morphisms $f$ such that:
\begin{enumerate}
\item For any smooth separated morphism of finite type $p$, the class $\eta_p$ agrees with the fundamental class defined in \cite[Example 2.3.9]{DJK18} thanks to the purity isomorphism.
\item For any regular closed immersion $i:Z\to X$, the class $\eta_i$ agrees with the fundamental class defined in \cite[Theorem 3.2.21]{DJK18} by deformation to the normal cone.
\end{enumerate}

\end{The}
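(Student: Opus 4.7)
The plan is to construct $\eta_f$ by factorization. Every quasi-projective lci morphism $f:X\to S$ admits a factorization $f = p \circ i$ with $i:X\hookrightarrow \PP^n_S$ a regular closed immersion and $p:\PP^n_S\to S$ the smooth structural projection; I would set $\eta_f := \eta_i \cdot \eta_p$ using the product on bivariant theory of \ref{BivProduct}, where $\eta_i$ and $\eta_p$ are provided by the two assumed special cases. Note that $\LL_f \simeq \LL_i + i^*\LL_p$, so the bigrading matches the target group $H_0(X/S,\LL_f)$.

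The main difficulty is to show this definition is independent of the chosen factorization. Given $f=p\circ i=q\circ j$ with $p:\PP^n_S\to S$ and $q:\PP^m_S\to S$, I would compare the two via the graph closed immersion $\Gamma:X\to \PP^n_S\times_S\PP^m_S$, whose compositions with the two projections recover $i$ and $j$ respectively. This reduces independence to a compatibility between the smooth fundamental classes and the regular-immersion fundamental classes under transverse base change, which is the substantive input: it follows from the deformation-to-normal-cone construction used to define $\eta_i$ in \cite[Theorem 3.2.21]{DJK18} combined with the smooth base change property of $\eta_p$ from \cite[Example 2.3.9]{DJK18}, together with the associativity formulas already available in the two pure cases.

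Once well-definedness is established, the remaining verifications are formal. Normalization holds because $\Id_S=\Id_S\circ\Id_S$ yields $\eta_{\Id_S}=\Id\cdot\Id=\Id$. For the associativity formula, given composable lci morphisms $g:Z\to X$ and $f:X\to Y$ with factorizations $g=p\circ i$ and $f=q\circ j$, one assembles a factorization of $f\circ g$ by base-changing $j$ along $p$ and composing suitably, then invokes associativity in each base case together with the compatibility lemma above. Stability under transverse base change reduces similarly to the two known cases. Uniqueness is then automatic: any system compatible with the two hypotheses must equal $\eta_i\cdot\eta_p$ on every factorization, hence agrees with the system constructed.
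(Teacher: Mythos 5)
The paper does not prove this statement: it quotes \cite[Theorem 3.3.2]{DJK18} verbatim and cites that reference for the proof. So there is no ``paper's own proof'' to compare against, and what you have written is an independent reconstruction of what the proof must look like.

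Your outline follows the standard route taken in \cite{DJK18}: factor a quasi-projective lci morphism as $f = p \circ i$ with $i$ a regular closed immersion into $\PP^n_S$ and $p$ smooth, set $\eta_f := \eta_i \cdot \eta_p$, verify independence of factorization via the graph trick, and then check normalization, associativity, base change, and uniqueness. That is the correct structure, and the uniqueness argument at the end is indeed automatic once existence is settled.

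Where the sketch is too quick is the well-definedness step, which you correctly flag as ``the substantive input'' but then dispose of in a sentence. After forming the graph $\Gamma: X \to \PP^n_S \times_S \PP^m_S$ with $\operatorname{pr}_1 \circ \Gamma = i$ and $\operatorname{pr}_2 \circ \Gamma = j$, independence of factorization reduces to showing that when a regular closed immersion is written as a (regular) closed immersion followed by a smooth projection, the fundamental classes multiply correctly: schematically, $\eta_i = \eta_\Gamma \cdot \eta_{\operatorname{pr}_1}$. The critical case is a smooth morphism $p$ together with a section $s$ (automatically a regular closed immersion), for which one must prove $\eta_s \cdot \eta_p$ is the unit class; this is the Key Lemma of \cite{DJK18} and it does \emph{not} ``follow from'' transverse base change and associativity in the two pure cases. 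It is a genuine computation carried out inside the deformation to the normal cone, comparing the purity isomorphism of $p$ with the Gysin map of $s$. Writing ``it follows from the deformation-to-normal-cone construction \dots\ combined with the smooth base change property \dots\ together with the associativity formulas'' names the right tools but conceals the fact that the reduction to that case and the verification of the Key Lemma itself are the real content of \cite[§3.3]{DJK18}. You should isolate that lemma as a named intermediate step rather than fold it into a clause.
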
 

Thanks to this system of fundamental classes, we can define Gysin morphisms as follows.
\begin{Par}{\sc Contravariance for lci morphisms.} \label{BivLci}
Let $f:Y\to X$ be an lci quasi-projective morphism $f:Y/S \to X/S$ of separated morphisms of finite type with virtual cotangent bundle $\LL_f$. For any motivic spectrum $\EE\in \SH(S)$, for any integer $n$ and any virtual bundle $\VV_X$ over $X$, there exists a Gysin morphism:
\begin{center}

$f^*:\EE_n(X/S,\VV_X)\to \EE_n(Y/S,\LL_f+\VV_Y)$ \\
$x\mapsto \eta_f.x$
\end{center}
using the product defined in \ref{BivProduct}.
These Gysin morphisms satisfy the following formulas:
\begin{enumerate}
\item {\em Functoriality:} \label{lciFunctoriality} For any suitable morphisms $f$ and $g$, one has $(fg)^*=g^*f^*$.
\item {\em Base change:} For any cartesian square
\begin{center}

$\xymatrix{ 
Y \ar[r]^g \ar[d]_q \ar@{}[rd]|-{\Delta} & T \ar[d]^p \\
X \ar[r]_f & S.}$

\end{center}
such that $f$ is quasi-projective lci and transverse to $p$, one has $f^*p_*=q_*g^*$. 
\end{enumerate}
\end{Par}

%

\begin{Par} {\sc Localization long exact sequence.} \label{LocalizationMap}
Another essential property is the following long exact sequence following from the usual localization triangle in the six functors formalism in $\SH$.
\par Indeed, let $\EE\in \SH(S)$ be a motivic spectrum. For any closed immersion $i:Z\to X$ of separated schemes over $S$, with (quasi-compact) complementary open immersion $j:U\to X$, there exists a canonical localization long exact sequence of the form:
\begin{center}

$\xymatrixcolsep{1pc}\xymatrix{
\dots \ar[r] &
\EE_n(Z/S,\VV_Z) \ar[r]^{i_*} &
\EE_n(X/S,\VV_X) \ar[r]^{j^*} &
\EE_{n}(U/S,\VV_U) \ar[r]^-{\partial_i} &
\EE_{n-1}(Z/S,\VV_Z) \ar[r] &
\dots.
}$
\end{center}
\end{Par}

We will need the following properties of localizations long exact sequences.

\begin{Pro} \label{LocalizationBiv}

Let $\EE\in \SH(S)$ be a motivic spectrum and consider the following commutative square
\begin{center}

$\xymatrix{
T \ar@{^{(}->}[r]^k  \ar@{^{(}->}[d]_q  & Y \ar@{^{(}->}[d]^p \\
Z \ar@{^{(}->}[r]_i & X 
}$

\end{center}
of closed immersions of separated schemes over $S$. For $\VV_X$ a virtual vector bundle over $X$, we have the following diagram
\begin{center}

$\xymatrixcolsep{1pc} \xymatrix{
\EE_n(T/S,\VV_T) \ar[r]^{k_*} \ar[d]_{q_*} &
\EE_n(Y/S,\VV_Y) \ar[r]^{k'^*} \ar[r] \ar[d]^{p_*} &
\EE_n(Y- T, \VV_{Y- T}) \ar[r]^{\partial_k} \ar[d]^{\tilde{p}_*} &
\EE_{n-1}(T/S,\VV_T) \ar[d]^{q_*} \\
\EE_n(Z/S,\VV_Z) \ar[r]^{i_*} \ar[d]^{q'^*} & 
\EE_n(X/S,\VV_X) \ar[r]^{i'^*} \ar[d]^{p'_*} &
\EE_n(X- Z/S,\VV_{X- Z}) \ar[d]^{\tilde{p}'^*} \ar[r]^{\partial_i} &
\EE_{n+1}(Z/S,\VV_Z) \ar[d]^{q'^*} \\
\EE_n(Z- T/S,\VV_{Z- T}) \ar[r]^{\tilde{i}_*} \ar[d]^{\partial_k} &
\EE_n(X- Y/S,\VV_{X- Y}) \ar[r]^-{\tilde{i}'^*} \ar[d]^{\partial_{{p}}} &
\EE_n(X- (Z\cup Y)/S, \VV_{X- (Z\cup Y)}) \ar@{}[rd]|-{(*)} \ar[d]^{\partial_{\tilde{p}}} \ar[r]^-{\partial_{\tilde{i}}} &
\EE_{n-1}(Z- T/S,\VV_{Z- T}) \ar[d]^{\partial_{\tilde{k}}} \\
\EE_{n-1}(T/S,\VV_T) \ar[r]_{k_*} & 
\EE_{n-1}(Y/S,\VV_Y) \ar[r]_{k'^*} &
\EE_n(Y- T/S,\VV_{Y- T}) \ar[r]_{\partial_k} &
\EE_{n-2}(T/S,\VV_T)
}$
\end{center}
with obvious maps. Each squares of this diagram is commutative except for $(*)$ which is anti-commutative.
\end{Pro}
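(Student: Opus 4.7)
The plan is to dissect the grid into its constituent squares and verify each using a distinct general principle of the six functors formalism in $\SH$. Three types of squares will commute by routine naturality or base-change arguments, while the central square $(*)$, which is the composition of two iterated boundary maps, carries a Koszul sign and is the only anti-commutative square.

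Concretely, I would sort the nine main squares into three groups. First, the squares involving only proper pushforwards (for instance $p_* \circ k_* = i_* \circ q_*$) commute by functoriality of $(-)_!$, applied to the commutative underlying diagram of closed immersions. Second, the squares combining a pushforward with an open restriction (for instance $i'^* \circ p_* = \tilde{p}_* \circ k'^*$) commute by the base-change isomorphism $j^* p_* \simeq \tilde{p}_* \tilde{j}^*$ applied to cartesian squares of the form $Y-T \to X-Z$, $Y \to X$, using $j^! = j^*$ for open immersions, as recalled in \ref{BaseChangeBiv} and \ref{covarianceBiv}. Third, the squares combining a structural map with a single boundary morphism $\partial$ commute by naturality of the localization distinguished triangle $i_* i^! \to \id \to j_* j^*$ under morphisms of triples $(Z, X, X-Z) \to (Z', X', X'-Z')$: each pushforward and each restriction in the diagram is induced by such a map of triples, and the resulting map of distinguished triangles in $\SH$ provides the desired commutation with $\partial$.

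The heart of the argument is the anti-commutativity of the central square $(*)$. Setting $U = X - (Z \cup Y)$, one must prove that
\[
\partial_{\tilde{k}} \circ \partial_{\tilde{i}} \;=\; -\, \partial_k \circ \partial_{\tilde{p}} \colon \EE_n(U/S, \VV_U) \to \EE_{n-2}(T/S, \VV_T).
\]
I would argue as follows. The pair of closed immersions $Z, Y \hookrightarrow X$ determines a nested system of distinguished triangles in $\SH(S)$ built from $p_! \un_X$ via the two cofibers $Z \to X \to X-Z$ and $Y \to X \to X-Y$. Peeling off first the $Z$-layer and then the $Y$-layer computes $\partial_{\tilde{k}} \circ \partial_{\tilde{i}}$, while peeling in the opposite order computes $\partial_k \circ \partial_{\tilde{p}}$. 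Identifying the two results requires transposing two shifts $[1]$ in the stable $\infty$-category $\SH(S)$, and this transposition introduces the Koszul sign $(-1)^{1\cdot 1} = -1$. Equivalently, one may apply the octahedral axiom to the composable closed immersions $T \hookrightarrow Y \cup Z \hookrightarrow X$, peeling off the middle term in the two possible orders and comparing the resulting diagonals.

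The main obstacle will be the sign bookkeeping in this last step: producing the precise sign $-1$ (rather than merely establishing equality up to sign) requires careful tracking of the orientation convention on distinguished triangles, as well as of the explicit isomorphism that identifies the two double-cofiber presentations of $p_!\un_U$. Once this sign is pinned down, all remaining squares reduce to standard naturality and base-change properties of $\SH$ already recorded in \cite{DJK18} and reviewed in \ref{BivLci}.
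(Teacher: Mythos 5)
The paper's own proof consists solely of a citation to \cite[Proposition 2.2.11]{DJK18}; you instead sketch a direct argument. Your decomposition of the grid into three families of routinely commutative squares (double pushforward $\Rightarrow$ functoriality of $(-)_!$; pushforward against open restriction $\Rightarrow$ base change $j^*p_* \simeq \tilde p_* \tilde j^*$; map against single boundary $\Rightarrow$ naturality of the localization triangle under morphisms of closed pairs) is the correct dissection, and it is in substance what one would find if one unpacked the cited DJK proof. Two caveats worth flagging. First, the base-change squares (e.g.\ $i'^*p_* = \tilde p_* k'^*$) only hold when the relevant open/proper square is cartesian, which requires (at least topologically) $T = Z\times_X Y$; the paper's hypothesis ``commutative square of closed immersions'' silently carries this assumption, and your argument inherits it without comment. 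Second, and more substantially, your treatment of $(*)$ stops precisely where the real content lies: the anti-commutativity of the double-boundary square is \emph{not} an immediate consequence of the octahedral axiom, and ``peeling off in two orders and transposing two shifts'' only explains why a sign \emph{could} appear, not that it is exactly $-1$. Nailing the sign requires either a careful $3\times 3$-lemma argument (à la BBD~1.1.11 or May's ``The additivity of traces'') or, in the $\infty$-categorical model of $\SH$, an explicit identification of the two presentations of the iterated cofiber. You are honest about this being the obstacle, so this is a deferred step rather than an error — but it is the heart of the proposition, and a complete proof would need to carry it out rather than gesture at Koszul signs. Net comparison: the paper buys brevity by outsourcing to DJK; your route buys self-containedness at the cost of one genuinely nontrivial sign verification left open.
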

\begin{proof}See \cite[Proposition 2.2.11]{DJK18}.
\end{proof}

\begin{Pro}[Base change for lci morphisms]\label{BaseChangelci} 
Consider a cartesian square of schemes
\begin{center}

$\xymatrix{
X' \ar[r]^{f'} \ar[d]_{g'} & Y' \ar[d]^g \\
X \ar[r]_f & Y
}$
\end{center}
with $f$ proper, and $g$ (quasi-projective) lci.
Suppose moreover that the square is {\em tor-independent}, that is for any $x\in X$, $y'\in Y'$ with $y=f(x)=g(y')$ and for any $i>0$ we have
\begin{center}

$\operatorname{Tor}^{\mathcal{O}_{Y,y}}_i(\mathcal{O}_{X,x},\mathcal{O}_{Y',y'})=0$.
\end{center}
Up to the canonical isomorphism $f^{\prime *}\LL_g \simeq \LL_{g'}$,we have:
\begin{center}
$f'_* \circ g'^* = g^*\circ f_*$.
\end{center}
\end{Pro}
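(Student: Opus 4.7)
The plan is to realise both compositions as multiplication by a fundamental class followed by a proper pushforward, and then to combine the transverse base change for fundamental classes with the projection formula in the six functors formalism. Concretely, by the definition in \ref{BivLci}, the Gysin morphism $g^*$ sends $y \in \EE_n(Y/S,\VV_Y)$ to $\eta_g \cdot y$, where $\eta_g \in H_0(Y'/Y,\LL_g)$ is the canonical fundamental class of $g$, and similarly $g'^*(x) = \eta_{g'} \cdot x$ with $\eta_{g'} \in H_0(X'/X,\LL_{g'})$; note that $g'$ is itself lci because tor-independent base change preserves lci morphisms.

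Second, the tor-independence hypothesis plays a double role: it produces the canonical isomorphism $f'^*\LL_g \simeq \LL_{g'}$ (compatibility of the cotangent complex with tor-independent pullback), and it is the correct transversality condition for the lci morphism $g$ along the proper map $f$. Applying the stability of the system of canonical fundamental classes under transverse base change to the given cartesian square $\Delta$ yields
\begin{equation*}
\Delta^*(\eta_g) = \eta_{g'} \in H_0(X'/X,\LL_{g'}),
\end{equation*}
using the identification above.

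The last ingredient is a projection formula: for $\alpha \in H_0(Y'/Y,\LL_g)$ and $x \in \EE_n(X/S,\VV_X)$,
\begin{equation*}
f'_*\bigl(\Delta^*(\alpha) \cdot x\bigr) \;=\; \alpha \cdot f_*(x)
\end{equation*}
in $\EE_n(Y'/S,\LL_g + \VV_{Y'})$. This is a direct consequence of the projection formula $f_!(A\otimes f^*B) \simeq f_!(A)\otimes B$ in $\SH$ applied to the proper morphism $f$ (so that $f_! = f_*$) together with the proper base change isomorphism $g^*f_! \simeq f'_!g'^*$; it is spelled out in \cite{DJK18}. Specialising to $\alpha = \eta_g$ and combining the three steps gives
\begin{equation*}
g^* f_*(x) = \eta_g \cdot f_*(x) = f'_*\bigl(\Delta^*(\eta_g)\cdot x\bigr) = f'_*(\eta_{g'}\cdot x) = f'_* g'^*(x),
\end{equation*}
which is the desired identity.

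The main obstacle is Step 2: verifying that tor-independence is the correct transversality condition for lci morphisms and that the canonical fundamental class (built by deformation to the normal cone in the regular immersion case and by purity in the smooth case) behaves well under such base changes. Both points are technical but already established in \cite{DJK18}, so the argument above amounts to assembling these results; no further geometric input is required.
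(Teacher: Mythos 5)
The paper gives no proof of this proposition: it simply cites \cite[Proposition 4.1.3]{DJK18} (the second occurrence of the proposition, in the cohomology setting, then points back to the bivariant statement). Your argument is a correct reconstruction of the proof that DJK18 itself gives, so it is not a genuinely different route but rather the spelled-out version of the citation. The three ingredients you assemble are exactly the right ones: the formula $g^*(\,\cdot\,) = \eta_g \cdot (\,\cdot\,)$ from the definition of Gysin morphisms; the stability of the canonical system of fundamental classes under transverse (i.e.\ tor-independent) base change, giving $\Delta^*(\eta_g) = \eta_{g'}$; and the bivariant projection formula $f'_*\bigl(\Delta^*(\alpha)\cdot x\bigr) = \alpha\cdot f_*(x)$, which DJK18 derive from the six-functor projection formula $f_!(A\otimes f^*B)\simeq f_!A\otimes B$ together with proper base change. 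Two small remarks. First, the paper quotes \cite[Theorem 3.3.2]{DJK18} only for the existence and uniqueness of the system of fundamental classes, not its stability under transverse base change; you are using the full strength of that theorem, which is legitimate but worth flagging explicitly. Second, the projection formula you invoke is itself a nontrivial lemma in \cite{DJK18} (one has to chase the exchange transformations carefully), so calling it a ``direct consequence'' is slightly optimistic, though it is indeed established there. Modulo these points of presentation, your proposal is correct and matches the intended argument.
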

\begin{proof} See \cite[Proposition 4.1.3]{DJK18}.
\end{proof}

The bivariant theory of a spectrum satisfies some $\AAA^1$-homotopy invariance property.
\begin{Pro} \label{BivA1invariance}
Let $\EE\in \SH(S)$ be a motivic spectrum. Let $X$ be an s-scheme over $S$ and let $p:V\to X$ be a vector bundle with virtual tangent bundle $\LL_p=p^*\langle V \rangle$. Then the Gysin morphism
\begin{center}

$p^*:\EE_n(X/S,\VV_X)\to \EE_n(V/S,\LL_p+\VV_V)$
\end{center}
is an isomorphism for any integer $n$ and any virtual bundle $\VV_X$ over $X$.
\end{Pro}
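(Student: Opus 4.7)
The plan is to reduce the claim to the statement that $p^* \colon \SH(X) \to \SH(V)$ is fully faithful by unwinding the definition of the Gysin map via purity, and then invoke the foundational $\AAA^1$-homotopy invariance of $\SH$.

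First I would unwind the right-hand side. Since $p$ is smooth, relative purity provides a natural isomorphism $p^!(F) \simeq p^*(F) \otimes \Tho_V(\LL_p)$ for every $F \in \SH(X)$. Writing $p_S \colon X \to S$ for the structural morphism, we have
\[
\EE_n(V/S, \LL_p + \VV_V) \;=\; [\Tho_V(\LL_p + \VV_V)[n],\, p^! p_S^!(\EE)],
\]
and by twisting away $\Tho_V(\LL_p)$ via purity this becomes
\[
[\Tho_V(\VV_V)[n],\, p^* p_S^!(\EE)] \;=\; [p^*\Tho_X(\VV_X)[n],\, p^* p_S^!(\EE)].
\]
Meanwhile $\EE_n(X/S,\VV_X)=[\Tho_X(\VV_X)[n],p_S^!(\EE)]$. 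So the two groups differ only by applying the functor $p^*$ to the mapping spectrum.

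Next I would check that under this identification the Gysin morphism is precisely the map induced by the functor $p^*$. By the construction of the fundamental class for a smooth morphism (via \cite[Example 2.3.9]{DJK18}), the element $\eta_p \in H_0(V/X,\LL_p) = [\Tho_V(\LL_p), p^!(\un_X)]$ is the purity isomorphism itself. Multiplication by $\eta_p$ in the sense of \ref{BivProduct}, together with the change of structural morphism $V \to X \to S$, therefore identifies $x \mapsto \eta_p\cdot x$ with the map $[F,G]_{\SH(X)} \to [p^*F, p^*G]_{\SH(V)}$ for $F = \Tho_X(\VV_X)[n]$ and $G = p_S^!(\EE)$, modulo the purity twist displayed above.

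It then suffices to prove that $p^* \colon \SH(X) \to \SH(V)$ is fully faithful, equivalently that the unit $\Id \to p_* p^*$ is an equivalence. This is the $\AAA^1$-homotopy invariance built into the construction of the stable motivic homotopy category: it holds for the trivial vector bundle $\AAA^n_X \to X$ by the homotopy invariance axiom, and for an arbitrary vector bundle $p \colon V \to X$ it follows via Zariski (or Nisnevich) descent applied to a local trivialisation together with the Mayer–Vietoris property of $\SH$.

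The main obstacle is the second step: one has to carefully track the construction of $\eta_p$ for a smooth morphism and verify that, after the purity-induced identification of the target, multiplication by $\eta_p$ really is the functorial map $p^*$ on the mapping spectrum. Once this compatibility is secured, everything reduces to a standard property of $\SH$. (Note that an alternative check of injectivity is immediate: the zero section $s \colon X \to V$ is a regular closed immersion with $p \circ s = \Id_X$, and functoriality of Gysin morphisms \ref{lciFunctoriality} yields $s^* \circ p^* = \Id$, so the argument really only needs surjectivity of $p^*$, which again is $\AAA^1$-invariance.)
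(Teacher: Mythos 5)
Your proof is correct and takes essentially the same route as the paper, which simply defers to \cite[Lemma 2.4.4]{DJK18}: you are unpacking that citation. The key reduction --- identify $\EE_n(V/S,\LL_p+\VV_V)$ with $[p^*\Tho_X(\VV_X)[n],\,p^*p_S^!(\EE)]$ via purity, observe that multiplication by $\eta_p$ is the map induced by the functor $p^*$, and then invoke full faithfulness of $p^*\colon\SH(X)\to\SH(V)$ (homotopy invariance plus Zariski descent along a local trivialisation) --- is exactly the mechanism behind the DJK statement, and your argument even handles a general spectrum $\EE$ directly rather than deducing it from the sphere-spectrum case as the paper's two-line proof indicates. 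The compatibility you flag as the ``main obstacle'' (that $\eta_p\cdot(-)$ really is the functorial $p^*$ after the purity twist) is genuine but is built into DJK's construction of the smooth fundamental class via \cite[Example 2.3.9]{DJK18}, so this is a write-up burden rather than a mathematical gap; your side remark that $s^*\circ p^*=\Id$ (zero section $s$, functoriality \ref{lciFunctoriality}) gives injectivity for free is a nice bonus but not needed once full faithfulness is in hand.
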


\begin{proof}
See \cite[Lemma 2.4.4]{DJK18} when $\EE$ is the sphere spectrum $\un$. The general case follows from the definitions.
\end{proof}
\begin{Def}
Keeping the notations of \ref{BivA1invariance}, we define the {\em Thom isomorphism}
\begin{center}

$\Phi_{V/X}: \EE_n(V/S,\VV_V)\to \EE_n(X/S,\VV-\langle V \rangle)$,
\end{center}
associated with $V/X$, as the inverse of the Gysin morphism $p^*:\EE_n(X/S,\VV_X-\langle E \rangle)\to \EE_n(V/S,\VV_V)$
\end{Def}
\begin{Rem} \label{BaseChangeThomIso}
The Thom isomorphism is compatible with base change and direct sums (see \cite[Remark 2.4.6]{DJK18}).
\end{Rem}

\subsection{Ramification formula}

\begin{Par} \label{RamificationFormula}

\par 
Consider a topologically cartesian square
\begin{center}

$\xymatrix{
T \ar@{^{(}->}[r]^k  \ar[d]_q \ar@{}[rd]|-{\Delta} & Y \ar[d]^p \\
Z \ar@{^{(}->}[r]_i & X 
}$
\end{center}
where $k$ and $i$ are regular closed immersions of codimension 1 and $T$ is a reduced connected scheme. We have $T=(Z\times_XY)_{\operatorname{red}}$. Denote by $\mathfrak{I}'$ the ideal of the immersion $Z\times_X Y \to Y$, by $\mathfrak{I}$ the ideal of $k$ and by $\mathfrak{J}$ the ideal of $i$. Assume moreover that the square is ramified with ramification index $e$ in the sense that there exists a nonzero natural number $e$ such that $\mathfrak{I}'=\mathfrak{I}^e$. We consider the morphism of deformation spaces ${\nu^{(e)}: D_TY\to D_ZX}$ defined as the spectrum of the composite
\begin{center}

$\xymatrix{
\bigoplus_{n\in \ZZ} \mathfrak{J}^n\cdot t^{-n} \ar[r] & 
\bigoplus_{n\in \ZZ} (\mathfrak{I}^e)^n\cdot t^{-n} \ar[r] & 
\bigoplus_{m\in \ZZ} \mathfrak{I}^m\cdot t^{-m} \\
x\cdot t^{-n} \ar@{|->}[r] & \tilde{f}(x)\cdot t^{-n} \ar@{|->}[r] & \tilde{f}(x)\cdot t^{-en} 
}$

\end{center}
where the first map is induced by the morphism $\tilde{f}:\mathfrak{J}\to \mathfrak{I}'$ defined via $f$ and where the second map takes the parameter $t$ to its power $t^e$. 
\par The map $\nu^{(e)}$ factors naturally making the following diagram commutative:
\begin{center}

$\xymatrix{
N_TY \ar@{^{(}->}[r] \ar[d] & D_TY \ar[d] & \Gm Y \ar@{=}[d] \ar@{_{(}->}[l] \\
q^*N_ZX \ar@{^{(}->}[r] \ar[d] & q^*D_ZX \ar[d] & \Gm Y \ar[d] \ar@{_{(}->}[l] \\
N_ZX \ar@{^{(}->}[r] & D_ZX & \Gm X. \ar@{_{(}->}[l]
}$
\end{center}	
Hence, we have the following commutative diagram:
\begin{center}

$\xymatrixcolsep{5pc}
\xymatrix{
\EE_{n+1}(\Gm Y/Y,*) \ar@{=}[d] \ar@{}[rd]|-{(1)} \ar[r]^-{\partial_{T/Y}} &
 \EE_n(N_TY/Y,*) \ar[d] \ar@{}[dr]|-{(3)}\ar[r]^-{\Phi_{N_TY/T}}_-{\sim} &
 \EE_n(T/Y,-\NN_TY+*) \ar[d]^-{\nu^{(e)}_*} \\
 \EE_{n+1}(\Gm Y/Y,*) \ar[r] \ar@{}[rd]|-{(2)}  &
 \EE_{n}(q^* N_ZX/Y, *) \ar[r]^-{\Phi_{q^* N_ZX/T}}_-{\sim} \ar@{}[rd]|-{(4)} &
 \EE_n(T/Y,-q^*\NN_ZX+*) \\
 \EE_{n+1}(\Gm X/X,*) \ar[u]^{\Delta^*} \ar[r]_{\partial_{Z/X}} &
 \EE_n(N_ZX/X,*) \ar[r]_{\Phi_{NZX/Z}}^-{\sim} \ar[u]^{\Delta^*} &
 \EE_n(Z/X,-\NN_ZX+*) \ar[u]^{\Delta^*}
 }$
\end{center}
where the arrows $\Delta^*$ denote the obvious maps induced by the corresponding squares. Square (1) (resp. square (2)) is commutative because of the naturality of localization long exact sequences with respect to the proper covariance (resp. base change). The map $\nu^{(e)}_*$ is defined so that the square (3) commutes. Square (4) commutes by compatibility of Thom isomorphisms with respect to base change (\ref{BaseChangeThomIso}).
\par From this, we deduce the formula 
\begin{center}

$\Delta^*(\eta_i)=\nu_*^{(e)}(\eta_k)$.
\end{center}
 We have proved the following theorem:

\end{Par}

\begin{The} \label{RamificationGeom}

Keeping the previous notations,
the following holds in $\EE_n(T/Y, N_TY)$
\begin{center}

$\Delta^*(\eta_i)=\nu_*^{(e)}(\eta_k)$.
\end{center}
\end{The}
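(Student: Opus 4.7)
The plan is to follow the blueprint of paragraph~\ref{RamificationFormula}: construct an explicit deformation-space morphism $\nu^{(e)}:D_TY\to D_ZX$ encoding the ramification index $e$, then assemble a large commutative diagram in which the two extremal rows produce the fundamental classes $\eta_k$ and $\eta_i$ by deformation to the normal cone. Chasing the canonical unit class along this diagram will give the announced equality.

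First, I would make the morphism $\nu^{(e)}$ precise. It is defined as $\Spec$ of the composite of Rees algebras written out in \ref{RamificationFormula}: the first arrow uses the ramification hypothesis $\mathfrak{I}'=\mathfrak{I}^e$ to push $\tilde{f}(\mathfrak{J}^n)$ into $(\mathfrak{I}^e)^n$, while the second performs the change of parameter $t\mapsto t^e$. One must check that this composite is well-defined as a homomorphism of $\ZZ$-graded rings, and that on the generic fibre it recovers $p:\Gm Y\to \Gm X$ while on the special fibre it factors as $N_TY \to q^*N_ZX \to N_ZX$, exactly as displayed in the triple vertical diagram of \ref{RamificationFormula}. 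This is the main computational content of the proof and the key obstacle, since everything downstream is purely formal six-functor bookkeeping.

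Next, I would consider the $3\times 3$ diagram of bivariant groups of \ref{RamificationFormula}, whose horizontal arrows are the composite of the boundary map of the localization triangle (for the deformation space $D_\bullet$ with its open/closed decomposition $\Gm \hookleftarrow D_\bullet \hookrightarrow N_\bullet$) followed by the appropriate Thom isomorphism $\Phi$. This composite, applied to a canonical unit class in $\EE_{n+1}(\Gm X/X,*)$ (resp. in $\EE_{n+1}(\Gm Y/Y,*)$), is precisely what produces $\eta_i$ (resp. $\eta_k$) under the definition of fundamental classes by deformation to the normal cone. I would then verify commutativity of the four internal squares one by one: square~(1) by naturality of the localization triangle with respect to proper push-forward (Proposition~\ref{LocalizationBiv}); square~(2) by naturality of the same triangle with respect to base change (also Proposition~\ref{LocalizationBiv}, applied to $\Delta$); square~(3) tautologically by the very construction of $\nu^{(e)}_*$, which is precisely the map that packages the Rees-algebra computation of the previous step; and square~(4) by the compatibility of Thom isomorphisms with base change (Remark~\ref{BaseChangeThomIso}).

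Finally, I would conclude by chasing the unit class $1\in\EE_{n+1}(\Gm X/X,*)$ along the two extremal paths of the diagram, from the bottom-left to the top-right corner. The bottom path yields $\Delta^*(\eta_i)$ (by the definition of $\eta_i$ via deformation to the normal cone, composed with base change), while the top path yields $\nu^{(e)}_*(\eta_k)$ (by the definition of $\eta_k$ followed by $\nu^{(e)}_*$). Commutativity of all four squares then forces the desired equality $\Delta^*(\eta_i)=\nu^{(e)}_*(\eta_k)$. The only non-formal ingredient in the whole argument is the explicit local construction of $\nu^{(e)}$ using $\mathfrak{I}'=\mathfrak{I}^e$ and the change of variable $t\mapsto t^e$; once this is in hand, the rest is a diagram chase built from standard functorialities already established in \cite{DJK18} and recalled above.
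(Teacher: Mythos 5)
Your proposal reproduces the paper's own argument from Paragraph~\ref{RamificationFormula} essentially verbatim: the same Rees-algebra construction of $\nu^{(e)}$, the same $3\times 3$ diagram of bivariant groups, the same justifications for the commutativity of squares (1)--(4), and the same diagram chase of the unit class to produce $\Delta^*(\eta_i)=\nu^{(e)}_*(\eta_k)$. The only difference is cosmetic: you make explicit the checks that $\nu^{(e)}$ is a well-defined graded ring homomorphism recovering $p$ on the generic fibre and factoring through $q^*N_ZX$ on the special fibre, which the paper presents more tersely via its triple diagram of deformation spaces.
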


\begin{Cor}
Consider the topologically cartesian square
\begin{center}
	
	$\xymatrix{
		T \ar@{^{(}->}[r]^k  \ar[d]_q \ar@{}[rd]|-{\Delta} & Y \ar[d]^p \\
		Z \ar@{^{(}->}[r]_i & X 
	}$
\end{center}
where $k$ and $i$ are regular closed immersions of codimension 1 and $T$ is a reduced connected scheme. Assume moreover that the square is ramified with ramification index $e$ as before. Then we have the following ramification formula:
\begin{center}
$p^*i_*=\nu_*^{(e)} k_*q^*$
\end{center}
\end{Cor}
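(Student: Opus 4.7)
The plan is to deduce this operator identity as a direct consequence of Theorem \ref{RamificationGeom} by evaluating both sides on an arbitrary class $x \in \EE_n(Z/S,\VV_Z)$ and invoking the product structure of Paragraph \ref{BivProduct}. The corollary implicitly places us in the situation where the Gysin morphisms $p^*$ and $q^*$ are defined, i.e.\ where $p$ (and hence its base change $q$) is lci, so that by the definition in \ref{BivLci} one has $p^*(-) = \eta_p \cdot (-)$ and $q^*(-) = \eta_q \cdot (-)$.

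First, I would rewrite both sides of the desired formula as products involving fundamental classes and proper pushforwards: the left-hand side becomes $\eta_p \cdot i_*(x)$, while the right-hand side becomes $\nu_*^{(e)}\bigl( k_*(\eta_q \cdot x)\bigr)$. Using the projection formula (i.e.\ the compatibility of the product of Paragraph \ref{BivProduct} with proper pushforward $i_*$ and $k_*$, which follows from the usual projection formula in the six functor formalism on $\SH$), together with the associativity of fundamental classes applied to the factorizations $p \circ k = i \circ q$ of the composed morphism $T \to X$, both sides can be rewritten in the common form (fundamental class) $\cdot\, x$, where the two fundamental classes live in the bivariant group $\EE_0(T/Y,-q^*\NN_ZX)$. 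A direct computation using the normalization and associativity relations for $(\eta_p, \eta_q, \eta_i, \eta_k)$ identifies these two classes as $\Delta^*(\eta_i)$ and $\nu_*^{(e)}(\eta_k)$ respectively, and Theorem \ref{RamificationGeom} asserts precisely that they are equal.

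The main obstacle will be verifying the projection formula in this bivariant setting with twists and carefully tracking the virtual bundles involved ($\LL_p$, $\LL_q$, $\NN_ZX$, $\NN_TY$) so that the two sides of the formula genuinely live in the same group. Here the associativity formula for the fundamental classes applied to $p \circ k = i \circ q$ provides the canonical identifications $\LL_q + q^*\LL_i \simeq \LL_k + k^*\LL_p$, while the compatibility of Thom isomorphisms with base change (\ref{BaseChangeThomIso}) handles the twist shifts produced by the deformation map $\nu^{(e)}$. Once the bookkeeping is in place, the conclusion is immediate from Theorem \ref{RamificationGeom}.
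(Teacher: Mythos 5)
Your high-level strategy — reduce the operator identity to Theorem~\ref{RamificationGeom} through the formalism of fundamental classes, bivariant products, and projection formulas — is the right idea, but the reduction as written has two genuine gaps.

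First, a type error in the claimed common form. You assert that ``both sides can be rewritten in the common form (fundamental class)~$\cdot\,x$, where the two fundamental classes live in the bivariant group $\EE_0(T/Y,-q^*\NN_ZX)$.'' But the bivariant product of Paragraph~\ref{BivProduct} pairs $\EE_m(Y'/X', \mathcal{W})\otimes\EE_n(X'/S,\VV)\to\EE_{m+n}(Y'/S,\cdot)$: the intermediate scheme must match. A class in $\EE_0(T/Y,\cdot)$ can only be paired with a class in $\EE_n(Y/\ast,\cdot)$, not with $x\in\EE_n(Z/\ast,\VV_Z)$; the bases $Y$ and $Z$ do not coincide, so the putative product ``(class in $\EE_0(T/Y,\cdot))\cdot x$'' is not defined. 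The rewriting you propose therefore cannot land in the group you claim, and the reduction does not typecheck as stated.

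Second, and more substantively, the associativity of fundamental classes applied to $p\circ k=i\circ q$ only produces the identity $\eta_q\cdot\eta_i=\eta_{T/X}=\eta_k\cdot\eta_p$ in $H_0(T/X,\LL_{T/X})$. This is an \emph{unconditional} statement true independently of the ramification index $e$, whereas Theorem~\ref{RamificationGeom} asserts $\Delta^*(\eta_i)=\nu_*^{(e)}(\eta_k)$ in $H_0(T/Y,-q^*\NN_ZX)$, a statement where $\Delta^*$ is the base-change operator for the (only topologically) cartesian square and where the factor $\nu_*^{(e)}$ encodes the defect of transversality. Your proposal implicitly substitutes the easy associativity identity for the theorem's content, so it never actually confronts the point of the corollary. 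What the argument must grapple with is that $T=(Z\times_XY)_{\mathrm{red}}$ differs from the honest fiber product $Z\times_XY$ by a nilpotent thickening of length controlled by $e$. The projection formula in the Fulton--MacPherson sense gives $\eta_p\cdot i_*(x)=i'_*(\Delta_0^*(\eta_p)\cdot x)$ where $i':Z\times_XY\to Y$ is the \emph{unreduced} base change, not $k_*$ applied to something on $T$; relating this to $k_*(\eta_q\cdot x)$ is exactly where the ramification correction must appear, and this step is missing. The correct path is to invoke the $\Delta^*$-based projection formula (base change for proper pushforward along the topologically cartesian square) and then feed in Theorem~\ref{RamificationGeom} to compute $\Delta^*(\eta_i)$, rather than re-deriving the relevant fundamental-class identity from associativity alone.
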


\begin{Rem}
One could say that $\nu_*^{(e)}$ represents the defect of transversality.
\end{Rem}

\subsection{Application to cohomology}

For a spectrum $\EE\in \SH(S)$, a natural number $n\in \NNN$, a morphism $p:X\to S$ of schemes and $\VV_X$ a virtual vector bundle over $X$, we define the cohomology group
\begin{center}
$\EE^n(X,\VV_X)=\Hom_{\SH(X)}(\un_X,\EE_X\otimes \Tho_X(\VV_X)[n])$

\end{center}
where $\EE_X=p^*\EE_S$.
\par It is dual to the bivariant theory $\EE_*(-,*)$ defined previously.
Indeed, we have the following theorem:

\begin{The} \label{Duality}
Let $f:X\to S$ be an essentially\footnote{In \cite{DJK18}, the authors worked only with separated $S$-schemes of finite type but we can extend in a canonical way most of the results for separated $S$-schemes \textit{essentially} of finite type (see also \cite[§2.1.1]{ADN19}).} smooth scheme and $\EE\in \SH(S)$ a motivic spectrum. Then for any integer $n$ and any virtual bundle $\VV_X$ over $X$, there is a canonical isomorphism
\begin{center}

$\EE^n(X,\VV_X)\simeq \EE_{-n}(X/S,\LL_f-\VV_X)$
\end{center}
which is contravariantly natural in $X$ with respect to étale morphisms.
\begin{Exe} \label{MainExampleCoh} A crucial example follows from the work of Morel: if $\EE$ is the unit sphere $\un$ and $X=\Spec E$ is the spectrum of a field, then the group $H^n(X,\ev{n})$ is isomorphic to the Milnor-Witt theory $\kMW_n(E)$.

\end{Exe}

\end{The}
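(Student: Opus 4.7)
The plan is to deduce the isomorphism directly from the relative purity theorem for smooth morphisms. For a smooth separated morphism $f: X \to S$ of finite type, relative purity provides a canonical equivalence $f^!(\EE_S) \simeq f^*(\EE_S) \otimes \Tho_X(\LL_f)$ in $\SH(X)$, which is stable under étale base change. Substituting this into the defining formula
$$\EE_{-n}(X/S, \LL_f - \VV_X) = [\Tho_X(\LL_f - \VV_X)[-n], \, f^!(\EE_S)]$$
yields the group
$$[\Tho_X(\LL_f - \VV_X)[-n], \, \EE_X \otimes \Tho_X(\LL_f)].$$
Since Thom spaces of virtual bundles are $\otimes$-invertible and $\Tho_X$ is a symmetric monoidal functor from the Picard groupoid $\VVV(X)$ to $\SH(X)$, I would tensor both entries by $\Tho_X(\VV_X - \LL_f)[n]$. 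The left-hand argument then becomes $\un_X$, while the right-hand argument becomes $\EE_X \otimes \Tho_X(\VV_X)[n]$ after cancelling $\Tho_X(\LL_f) \otimes \Tho_X(-\LL_f) \simeq \un_X$. The resulting hom group is precisely $\EE^n(X, \VV_X)$, giving the desired isomorphism.

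To handle the essentially smooth case, I would invoke the convention recalled in the footnote and write $X$ as a cofiltered limit of smooth $S$-schemes of finite type with affine étale transition maps. Continuity of $\SH$, combined with the compatibility of the purity isomorphism with étale pullback, allows both $\EE^n(X, \VV_X)$ and $\EE_{-n}(X/S, \LL_f - \VV_X)$ to be expressed as colimits of the corresponding groups for the smooth approximations, thus transferring the isomorphism to the limit. Naturality in $X$ under an étale morphism $g: Y \to X$ comes for free: on the cohomology side one pulls back along $g^*: \SH(X) \to \SH(Y)$, and on the bivariant side one uses the base change map $\Delta^*$ of \ref{BaseChangeBiv}. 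The two sides agree because the relative purity isomorphism is itself stable under étale base change (equivalently, because $g^! \simeq g^*$ for $g$ étale).

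The main technical obstacle I anticipate is the careful bookkeeping of canonical isomorphisms between Thom spaces of virtual bundles, in particular the identification $\Tho_X(\LL_f - \VV_X) \otimes \Tho_X(-\LL_f) \simeq \Tho_X(-\VV_X)$. This requires using that $\Tho_X$ factors through the Picard groupoid structure on $\VVV(X)$, which supplies the coherence data ensuring the isomorphism is truly canonical rather than merely existing up to non-unique choice. Once this coherence is in place, Example \ref{MainExampleCoh} follows immediately, since for $X = \Spec E$ over $S = \Spec k$ the virtual cotangent complex $\LL_f$ contributes no extra twist compatible with Morel's identification $H^n(\Spec E, \ev{n}) \simeq \kMW_n(E)$.
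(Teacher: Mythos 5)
Your argument is the standard purity-based proof, and it matches the route the paper implicitly invokes by deferring to \cite{DJK18}: relative purity $f^!\simeq f^*\otimes\Tho_X(\LL_f)$ for smooth $f$, then cancellation of Thom spaces in the Picard groupoid to turn $[\Tho_X(\LL_f-\VV_X)[-n],\,f^!\EE_S]$ into $[\un_X,\,\EE_X\otimes\Tho_X(\VV_X)[n]]$, and finally a pro-smooth limit argument with continuity of $\SH$ to extend to essentially smooth $X$, with étale naturality coming from $g^!\simeq g^*$. The paper itself does not spell out a proof, so this is the expected reconstruction.

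One small caveat on your closing remark about Example \ref{MainExampleCoh}: it is not the case that $\LL_f$ ``contributes no extra twist'' for $X=\Spec E$ over $S=\Spec k$ — in general $\LL_{E/k}\simeq\Om_{E/k}$ has rank $\operatorname{tr.deg}_k(E)$ and is a genuine twist. What saves the day is that the example is not a corollary of the duality isomorphism at all; it is an independent recall of Morel's computation of $[\un_X,\un_X\otimes\Tho_X(\ev{n})[n]]$ as $\kMW_n(E)$, which lives entirely on the cohomology side. So the example is correct but for a different reason than you give, and you do not need to say anything about $\LL_f$ to justify it.
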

In the following, we give the usual properties of the bivariant cohomology theory. The proofs follow directly by duality and the preceding results. In practice (since our base field $k$ is perfect) we work mainly with essentially smooth schemes, hence we could also apply Theorem \ref{Duality}.

%
%
%
%
%


%
 %
%

\begin{Par} {\sc Contravariance.} \label{contravarianceGeom} Let $f:Y\to X$ be a morphism of schemes and $\VV_X$ be a virtual bundle over $X$. There exists a pullback map
\begin{center}

$f^*:\EE^n(X,\VV_X)\to \EE^n(Y,\VV_Y)$.
\end{center}

\end{Par}

\begin{Par}{\sc Covariance.} \label{covarianceGeom}
Let $f:Y\to X$ be an lci projective map. There exists a Gysin morphism
\begin{center}

$f_*:\EE^n(Y,\LL_f+\VV_Y)\to \EE^n(X,\VV_X)$.

\end{center} As always, the definition follows from general considerations using the six functors formalism. For instance, assume $f$ smooth. By adjunction, the set $\Hom_{\SH(Y)}(\un_Y,\EE_Y\otimes \Tho (\LL_f))$ is in bijection with $\Hom_{\SH(X)}(\un_X,f_*(\EE_Y\otimes \Tho (\LL_f)))$. The purity isomorphism and the fact that $f_*=f_!$ (since $f$ is proper) lead a bijection with the set $\Hom_{\SH(X)}(\un_X,f_!f^!\EE_X)$. Any element of this set can be composed with the counit map $f_!f^!\to \Id$ so that we obtain an element in $\Hom_{\SH(X)}(\un_X,\EE_X)$.
\\ More generally, the group $\EE^r(Y,\LL_f+\VV_Y)$ is isomorphic to $[f^*(\un_X),f^*(\EE_X\otimes \Tho (\VV_X)[r])\otimes \Tho (\LL_f)]_Y$ which is, by adjunction, isomorphic to $[\un_X, f_*(f^*(\EE_X\otimes \Tho (\VV_X)[r])\otimes \Tho (\LL_f))]_X$. We can then compose with the trace map $\Tr_f:f_*\Sigma^{\LL_f}f^*\to \Id$ defined in \cite[§2.5.3]{DJK18} in order to obtain a map whose target is the group $\EE^r(X,\VV_X)$.
\end{Par}
\begin{Par} {\sc Milnor-Witt action.} \label{MWactionGeom}
Any motivic spectrum $\EE$ equipped with a unit isomorphism $\un \otimes \EE \to \EE$ defines an action by composition on the left
\begin{center}

$\gamma:H^m(X,\mathcal{W}_X)\otimes \EE^n(X,\VV_X)\to \EE^{m+n}(X,\mathcal{W}_X+\VV_X)$.
\end{center}
\end{Par}

\begin{Par} {\sc Localization long exact sequence.} \label{LocalizationMap}
Another essential property is the following long exact sequence deduced from the usual localization triangle of the six functors formalism in $\SH$.
\par Indeed, let $\EE\in \SH(S)$ be a motivic spectrum. For any closed immersion $i:Z\to X$ of separated schemes over $S$, with (quasi-compact) complementary open immersion $j:U\to X$, there exists a canonical localization long exact sequence of the form:
\begin{center}

$\xymatrixcolsep{1pc}\xymatrix{
\dots \ar[r] &
\EE^n(Z,\LL_Z+\VV_Z) \ar[r]^-{i_*} &
\EE^n(X,\LL_X+\VV_X) \ar[r]^-{j^*} &
\EE^{n}(U,\LL_U+\VV_U) \ar[r]^-{\partial_i} &
\EE^{n+1}(Z,\LL_Z+\VV_Z) \ar[r] &
\dots 
}$
\end{center}
where the residue map $\partial_i$ can be defined as the following composition
\begin{center}
$\xymatrix{
\EE^n(X-Z,\VV_{X-Z})\ar@{=}[r] \ar[d]^{\partial_i} & 
\EE_{-n}(X-Z/X-Z,-\VV_{X-Z}) \ar[r]^{\sim} &
 \EE_{-n}(X-Z/X,-\VV_{X-Z}) \ar[d]^{\partial} \\
\EE^{n+1}(Z,\LL_i+\VV_Z) &
\EE_{-n-1}(Z/Z,-\LL_i-\VV_Z) \ar@{=}[l] \ar[r]^{-\times \eta_i}_{\sim} &
\EE_{-n-1}(Z/X,-\VV_Z). 
}$
\end{center}
\end{Par}

\begin{Pro} \label{LocalizationGeom} 

Let $\EE\in \SH(S)$ be a motivic spectrum and consider the following commutative square
\begin{center}

$\xymatrix{
T \ar@{^{(}->}[r]^k  \ar@{^{(}->}[d]_q  & Y \ar@{^{(}->}[d]^p \\
Z \ar@{^{(}->}[r]_i & X 
}$

\end{center}
of closed immersions of separated schemes over $S$. For $\VV_X$ a virtual vector bundle over $X$, we have the following diagram
\begin{center}

$\xymatrixcolsep{1pc} \xymatrix{
\EE^n(T,*) \ar[r]^{k_*} \ar[d]_{q_*} &
\EE^n(Y,*) \ar[r]^{k'^*} \ar[r] \ar[d]^{p_*} &
\EE^n(Y- T, *) \ar[r]^{\partial_k} \ar[d]^{\tilde{p}_*} &
\EE^{n+1}(T,*) \ar[d]^{q_*} \\
\EE^n(Z,*) \ar[r]^{i_*} \ar[d]^{q'^*} & 
\EE^n(X,*) \ar[r]^{i'^*} \ar[d]^{p'_*} &
\EE^n(X- Z,*) \ar[d]^{\tilde{p}'^*} \ar[r]^{\partial_i} &
\EE^{n+1}(Z,*) \ar[d]^{q'^*} \\
\EE^n(Z- T,*) \ar[r]^{\tilde{i}_*} \ar[d]^{\partial_k} &
\EE^n(X- Y,*) \ar[r]^-{\tilde{i}'^*} \ar[d]^{\partial_{{p}}} &
\EE^n(X- (Z\cup Y), *) \ar@{}[rd]|-{(*)} \ar[d]^{\partial_{\tilde{p}}} \ar[r]^-{\partial_{\tilde{i}}} &
\EE^{n+1}(Z- T,*) \ar[d]^{\partial_{\tilde{k}}} \\
\EE^{n+1}(T,*) \ar[r]_{k_*} & 
\EE^{n+1}(Y,*) \ar[r]_{k'^*} &
\EE^{n+1}(Y- T,*) \ar[r]_{\partial_k} &
\EE^{n+2}(T,*)
}$
\end{center}
with obvious maps. Each square of this diagram is commutative except for $(*)$ which is anti-commutative.
\end{Pro}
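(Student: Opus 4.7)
The plan is to deduce this diagram-level statement from its bivariant counterpart, Proposition \ref{LocalizationBiv}, by dualization, exactly as the cohomological localization long exact sequence in \ref{LocalizationMap} was itself defined from the bivariant one. Indeed, by construction each residue map $\partial_i$ in cohomology is the composition of the bivariant residue with the fundamental-class isomorphism $-\times\eta_i$ of \ref{LocalizationMap}, and each cohomological pushforward of \ref{covarianceGeom} is the bivariant pushforward conjugated by the purity/trace identifications; hence the twelve squares to be analyzed here are, up to insertion of these fixed isomorphisms at every corner, the twelve squares of Proposition \ref{LocalizationBiv}.

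First I would verify that the Thom/fundamental-class identifications are compatible on every side of every square. This is where the associativity formula for systems of fundamental classes and the compatibility of the Thom isomorphism with base change (Remark \ref{BaseChangeThomIso}) enter: for a closed immersion $i\colon Z\hookrightarrow X$ one has $\eta_{i\circ q}=\eta_q\cdot\eta_i$ and similarly $\eta_{k\circ p}=\eta_p\cdot\eta_k$, which together with the commutativity of the outer square $i\circ q = p\circ k$ force the Thom twists at matching corners of the $4\times 4$ diagram to agree up to canonical isomorphism. Once this bookkeeping is in place, the commutativity or anti-commutativity of each square in cohomology becomes equivalent to the corresponding one in the bivariant diagram.

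Square-by-square, the four purely pushforward squares commute by functoriality of the proper pushforward $(i\circ q)_*=(p\circ k)_*$ in the six functors formalism. The six mixed squares containing exactly one residue map commute by naturality of the localization triangle with respect to proper covariance (same argument as for square~(1) in \ref{RamificationFormula}) and with respect to étale/open restriction (same as square~(2) there). The only subtle square is the central one $(*)$, in which two distinct residues $\partial_{\tilde{p}}$ and $\partial_{\tilde{i}}$ are composed in opposite orders: its anti-commutativity is the octahedral/connecting-map sign associated with the two natural ways of ordering the pair of codimension-one closed strata inside $X-(Z\cap Y)$, equivalently the standard sign arising from the iterated deformation-to-the-normal-cone spaces $D_T D_Z X$ and $D_T D_Y X$. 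This is precisely the sign recorded already in Proposition \ref{LocalizationBiv} and it survives duality unchanged, which yields the claim.

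The main obstacle I anticipate is purely notational: keeping track of the virtual twists at each of the sixteen corners of the diagram (all the twists $\LL_?+\VV_?$, and the canonical isomorphisms $\LL_{i\circ q}\simeq \LL_q+q^*\LL_i$ etc.) so that the commutative pentagons witnessing the compatibility of fundamental classes line up at every joint. There is no additional geometric input beyond the bivariant version; once the bookkeeping is clean, the proof reduces mechanically to invoking Proposition \ref{LocalizationBiv}.
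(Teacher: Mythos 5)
Your proposal is correct and takes essentially the same approach as the paper, which simply states that Proposition \ref{LocalizationGeom} follows from Proposition \ref{LocalizationBiv} via the duality that defines the cohomological residue and pushforward in terms of their bivariant counterparts and the Thom/fundamental-class identifications. The square-by-square commentary at the end re-derives facts already packaged into Proposition \ref{LocalizationBiv} and is not strictly needed, but it does not change the route.
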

\begin{proof}
This follows from Proposition \ref{LocalizationBiv}.
\end{proof}

\begin{Pro} \label{R3eGeom}

Let $\EE\in \SH(S)$ be a motivic spectrum. Let $i:Z\to X$ a closed immersion with complementary open immersion $j:U\to X$. Let $x\in H^m(Y,\mathcal{V}_Y)$. Then the following diagram is commutative:
\par 

\begin{center}

$\xymatrixcolsep{1.2pc}\xymatrix{
\EE^{n}(Z,\LL_i+\mathcal{W}_Z) \ar[r]^{i_*} \ar[d]^{ \epsilon \circ \gamma_{i^*(x)}} &
\EE^n(X,\mathcal{W}_X) \ar[r]^{j^*} \ar[d]^{\gamma_{x}} &
\EE^n(U,\mathcal{W}_U) \ar[r]^{\partial_{Z,X}} \ar[d]^{\gamma_{j^*(x)}} &
\EE^{n-1}(Z,\LL_i+\mathcal{W}_Z) \ar[d]^{\epsilon \circ \gamma_{i^*(x)}} \\
\EE^{n+m}(Z,\LL_i+\VV_Z+\mathcal{W}_Z)  \ar[r]^{i_*}&
\EE^{n+m}(X,\VV_X+\mathcal{W}_X)  \ar[r]^{j^*}&
\EE^{n+m}(U,\VV_U+\mathcal{W}_U)  \ar[r]^-{\partial_{Z,X}} &
\EE^{n+m-1}(X,\LL_i+\VV_Z+\mathcal{W}_Z)
}$

\end{center}
where $\gamma_?$ is the multiplication map defined in \ref{MWactionGeom} (see also \ref{BivProduct}) and where $\epsilon$ is the isomorphism induced by the switch isomorphism $\LL_i + \VV_Z\simeq \VV_Z+\LL_i$.
\end{Pro}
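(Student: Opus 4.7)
The plan is to treat the three squares separately. The first two (those involving $i_*$ and $j^*$) should follow from general formal properties of the six functor formalism (projection formula, multiplicativity of pullback), while the third square (involving the residue $\partial_{Z,X}$) carries the real content and is where the sign $\epsilon$ originates.

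First I would handle the middle square. Since $j$ is an open immersion, the pullback $j^*$ is symmetric monoidal; in particular $j^*(x\cdot y)=j^*(x)\cdot j^*(y)$ for any pairing, which gives the commutativity $\gamma_{j^*(x)}\circ j^*=j^*\circ \gamma_x$ with no sign. Next, for the square with $i_*$, I would invoke the projection formula along the proper (closed) morphism $i$. Concretely, applying the multiplication $\mu:\un\otimes \EE\to \EE$ together with the compatibility $i_!(a\otimes i^*b)\simeq (i_!a)\otimes b$ yields $i_*(\gamma_{i^*(x)}(y))=\gamma_x(i_*(y))$ at the level of the bivariant/cohomological groups of Section~\ref{BivTheory}; this is exactly \ref{BivProduct} applied to the composition $Z\hookrightarrow X\to S$, together with the fact that pullback along $i$ is monoidal.

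For the right square I would first reduce to a statement about the bivariant theory via the duality of Theorem~\ref{Duality}, then use the description of $\partial_i$ displayed in \ref{LocalizationMap} as the composition of (a) the connecting map $\partial$ of the localization triangle $i_*i^!\to\Id\to j_*j^*$ and (b) cup product with the fundamental class $\eta_i\in H_0(Z/X,\LL_i)$. The key input is that $\partial$ is natural with respect to the action of any element $x\in \EE^m(X,\VV_X)$ restricted accordingly to $U$ and $Z$; this is a direct consequence of the fact that the localization triangle lives in $\SH(X)$ and is therefore compatible with tensoring by any object of $\SH(X)$. Combining this with the associativity of the product in \ref{BivProduct} gives the equality $\partial_{Z,X}\circ \gamma_x=\gamma_{i^*(x)}\circ \eta_i\cdot(-)\circ (\text{connecting})$ on the open part, and what remains is to swap the factors $\eta_i$ and $i^*(x)$ in the virtual-bundle grading. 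This swap is exactly where the sign $\epsilon$ appears: moving $\LL_i$ past $\VV_Z$ in the symmetric monoidal graded Picard groupoid $\VVV(Z)$ introduces the switch isomorphism, and unit-class computations (via Example~\ref{MainExampleCoh} and Morel's identification with $\kMW_*$) identify this switch with multiplication by $\epsilon=-\langle-1\rangle$ in the cohomology group. This produces precisely the factor $\epsilon$ in front of $\gamma_{i^*(x)}$ in the outer columns.

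The main obstacle is the bookkeeping in this last step: one must keep track of the order in which virtual bundles are concatenated, since the grading is no longer by $\ZZ$ but by the groupoid $\VVV$, and the relevant switch isomorphisms contribute $\epsilon$'s rather than plain signs. A clean way to organise this is to write $\partial_i$ as the composite displayed in \ref{LocalizationMap} and verify the compatibility of each arrow separately with the product $\gamma_x$: the connecting map $\partial$ is $\EE$-linear by construction of the six functors formalism (it comes from an exact triangle in $\SH(X)$), the Thom/Gysin identifications $\EE^n(X-Z,\VV)\simeq \EE_{-n}(X-Z/X,-\VV)$ are monoidal up to the swap, and the multiplication by $\eta_i$ commutes with $\gamma_x$ up to $\epsilon$ by the graded-commutativity of the product in \ref{BivProduct}. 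Assembling these four partial commutativities yields the desired diagram with the sign $\epsilon$ appearing exactly where stated.
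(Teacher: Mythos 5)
The paper's proof of Proposition~\ref{R3eGeom} is a one-line citation to \cite[Proposition 2.2.12]{DJK18}, so your sketch is in effect a reconstruction of the argument hidden behind that reference. Your overall architecture is right: the middle square follows from monoidality of $j^*$, the left square from the projection formula along the proper morphism $i$, and the right square from the decomposition of $\partial_{Z,X}$ as (connecting map of the localization triangle) $\circ$ (inverse of cup with $\eta_i$), together with naturality of each piece under the $\EE$-action. This is indeed the content of the DJK proposition, and since that proposition is formulated for the bivariant theory, your step of reducing to the bivariant side via Theorem~\ref{Duality} is exactly the right move.

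Two bookkeeping points deserve a closer look. First, the switch $\epsilon$ appears in \emph{both} outer columns of the diagram, not only on the right, because the Gysin pushforward $i_*$ of \ref{covarianceGeom} is defined with the twist $\LL_i$ placed in front: applying $\gamma_{i^*(x)}$ to an element of $\EE^n(Z,\LL_i+\mathcal{W}_Z)$ lands in $\EE^{n+m}(Z,\VV_Z+\LL_i+\mathcal{W}_Z)$, and one must swap $\VV_Z$ and $\LL_i$ before $i_*$ can be applied. Your stated form of the projection formula, $i_*(\gamma_{i^*(x)}(y))=\gamma_x(i_*(y))$, suppresses this reordering; either the $\epsilon$ should be made explicit there, or the projection formula must be phrased so that the twist reordering is absorbed into the identification of source and target groups. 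Second, the claim that the switch ``is'' multiplication by $\epsilon=-\langle-1\rangle$ overreads the statement: the $\epsilon$ in the proposition is defined to be the isomorphism on cohomology induced by the symmetry $\LL_i+\VV_Z\simeq\VV_Z+\LL_i$ in $\VVV(Z)$. Its expression as a unit in $\kMW$ depends on the ranks of $\LL_i$ and $\VV_Z$ (a power of $-\langle-1\rangle$ rather than $-\langle-1\rangle$ itself); the identification you invoke is what one uses later, in the proof of rule~\ref{itm:R3e}, when $\LL_i$ and the acting class both have rank $\pm 1$. Neither issue affects the validity of the overall strategy, but making them explicit is precisely the bookkeeping you flag as ``the main obstacle,'' and it is worth doing carefully since the grading groupoid is not $\ZZ$.
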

\begin{proof}
This follows from \cite[Proposition 2.2.12]{DJK18}.
\end{proof}

\begin{Pro}[Base change for lci morphisms]\label{BaseChangelci} 
Consider a cartesian square of schemes
\begin{center}

$\xymatrix{
X' \ar[r]^{f'} \ar[d]_{g'} & Y' \ar[d]^g \\
X \ar[r]_f & Y
}$
\end{center}
with $f$ proper, and $g$ (quasi-projective) lci.
Suppose moreover that the square is {\em tor-independent}, that is for any $x\in X$, $y'\in Y'$ with $y=f(x)=g(y')$ and for any $i>0$ we have
\begin{center}

$\operatorname{Tor}^{\mathcal{O}_{Y,y}}_i(\mathcal{O}_{X,x},\mathcal{O}_{Y',y'})=0$.
\end{center}
Up to the canonical isomorphism $f^{'*}\LL_g \simeq \LL_{g'}$, we have:
\begin{center}
$f'_* \circ g'^* = g^*\circ f_*$.
\end{center}
\end{Pro}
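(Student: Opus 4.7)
The plan is to reduce this cohomological base-change formula to its bivariant avatar (namely \cite[Proposition 4.1.3]{DJK18}, recalled earlier in this section). Because $k$ is perfect and we work with schemes essentially of finite type over $k$, Theorem \ref{Duality} provides natural isomorphisms $\EE^{n}(X,\VV_X)\simeq \EE_{-n}(X/S,\LL_{X/S}-\VV_X)$ on essentially smooth schemes; once every piece of data in the square has been translated along this identification, the desired equality becomes the bivariant one.

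First I would translate each arrow. Under duality, the cohomological Gysin pullback $g^{*}$ along the lci morphism $g$ corresponds, up to the canonical splitting $\LL_{Y'/S}\simeq \LL_g+g^{*}\LL_{Y/S}$, to the bivariant Gysin of \ref{BivLci}, defined as left multiplication by the fundamental class $\eta_g\in H_0(Y'/Y,\LL_g)$; similarly for $g'^{*}$ using the splitting $\LL_{X'/S}\simeq \LL_{g'}+g'^{*}\LL_{X/S}$. The pushforward $f_{*}$ along the proper lci morphism $f$ corresponds under duality to the proper covariance of \ref{covarianceBiv}, built from the counit $f_{!}f^{!}\to \Id$ and the identity $f_{!}=f_{*}$. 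These dictionary entries are essentially formal consequences of the construction of the fundamental classes and of the trace map in \cite{DJK18}.

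The tor-independence hypothesis then plays two roles: it is precisely what forces the canonical map $f'^{*}\LL_g\to \LL_{g'}$ to be an isomorphism, and it makes $f$ transverse to $g$ in the sense of \cite{DJK18}. By the stability of the system of fundamental classes under transverse base change (recalled just before Theorem \ref{RamificationGeom}), we obtain the crucial identity $\Delta^{*}(\eta_g)=\eta_{g'}$ in $H_0(X'/Y',\LL_{g'})$. Combining this with associativity of the bivariant product \ref{BivProduct} and the bivariant proper base change yields the equality $f'_{*}\circ g'^{*}=g^{*}\circ f_{*}$ in the bivariant theory; transferring back via duality gives the cohomological statement.

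The main obstacle, I expect, is the careful bookkeeping of the virtual bundles and the verification that the duality isomorphism of Theorem \ref{Duality} is natural with respect to all three operations (Gysin pullback, proper pushforward and the associated trace) at once. Once these naturality statements are recorded, the argument reduces to a formal combination of \cite[Proposition 4.1.3]{DJK18} together with the transverse base-change identity $\Delta^{*}(\eta_g)=\eta_{g'}$ for fundamental classes.
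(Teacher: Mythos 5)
Your proposal follows the same route as the paper: dualize the cohomological statement to its bivariant counterpart via Theorem \ref{Duality} and then invoke \cite[Proposition 4.1.3]{DJK18}, which is precisely what the paper's one-line proof does (the paper simply points back to the bivariant base-change proposition). The only small redundancy is that you both cite the bivariant result and re-sketch its proof from the transverse stability of fundamental classes $\Delta^{*}(\eta_g)=\eta_{g'}$; the citation alone suffices, since that stability is an ingredient of the cited proof rather than something additional to supply.
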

\begin{proof}
See Proposition \ref{BaseChangeBiv}.
\end{proof}

We will need the following proposition:
\begin{Pro} \label{Prop.1.36}

Let $\nu:Z\to X$ a 
closed immersion of smooth schemes. Consider the canonical
 decomposition $Z=\sqcup_{i\in I} Z_i$ and $X=\sqcup_{j\in J} X_j$
  into connected components. Denote by $\hat{Z}_j=Z\times_X X_j$.
   For any $i\in I$, let $j\in J$ be the unique element such that $Z_i\subset X_j$ and denote by $\nu^j_i:Z_i\to Z_j$ the induced immersion. Consider the complement $Z_i'$ such that $\hat{Z}_i=Z_i\sqcup Z'_i$. The following diagram is commutative:
   \begin{center}
   
   $\xymatrixcolsep{3pc}\xymatrix{
   \EE^{n-1}(X-Z,*) \ar[r]^-{\partial_{X,Z}} \ar[d]^{\simeq} & 
   \EE^{n}(Z,*) \ar[r]^{\nu_*} \ar[d]^{\simeq} & 
   \EE^n(X,*) \ar[d]^{\simeq} \\
   \bigoplus_{j\in J} \EE^{n-1}(X_j-\hat{Z}_j,*) \ar[r]^-{(\partial_{ij})_{i\in I,j\in J}} &
   \bigoplus_{i\in I} \EE^n(Z_i,*) \ar[r]^{(\nu_{ij})_{i\in I,j\in J}} &
   \bigoplus_{j\in J} \EE^n(X_j,*)
   }
   $
   \end{center}
   where the vertical maps are the canonical isomorphisms and where, for any $(i,j)\in I\times J$, if $Z_i\subset X_j$, then $v_{ij}=(\nu^j_i)_*$ and $\partial_{ij}=\partial_{X_j-Z_j',Z_i}$ ; otherwise $v_{ij}=0$ and $\partial_{ij}=0$.
\end{Pro}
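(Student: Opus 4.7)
The plan is to split the statement into two independent compatibilities, first decomposing over the connected components of $X$, and then, with $X$ replaced by a single component $X_j$, decomposing over the connected components of $Z$ inside $X_j$. The substantive step will be the second one, where the key tool is Proposition \ref{LocalizationGeom}.

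Since each $X_j\hookrightarrow X$ is simultaneously open and closed, the canonical vertical isomorphisms of the diagram come from additivity of $\EE^*$ on disjoint unions: $\EE^n(X,*)=\bigoplus_j \EE^n(X_j,*)$, $\EE^n(X-Z,*)=\bigoplus_j \EE^n(X_j-\hat Z_j,*)$, and $\EE^n(Z,*)=\bigoplus_i \EE^n(Z_i,*)$. The pushforward $\nu_*$ commutes with these splittings because $\nu=\sqcup_i \nu_i$ decomposes as a disjoint union, each $\nu_i$ factoring through the unique $X_{j(i)}$ containing $Z_i$, and proper pushforward is additive under coproducts. For the residue, compatibility with pullback along the open immersions $X_j\hookrightarrow X$ and $Z_i\hookrightarrow Z$ is a direct consequence of the naturality of the localization long exact sequence (itself a special case of Proposition \ref{LocalizationGeom}). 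This reduces the proposition to the case where $X=X_j$ is connected, in which the pushforward decomposition follows at once from $\nu^j=\sqcup_i \nu_i^j$.

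It remains to treat the residue when $X=X_j$. The claim is that for each $i$ with $Z_i\subset X_j$ the composite $\mathrm{pr}_i\circ \partial_{X_j,\hat Z_j}$ equals $\partial_{X_j-Z_i',Z_i}$, where $Z_i'=\hat Z_j-Z_i$ is the union of the remaining components. For this, I apply Proposition \ref{LocalizationGeom} to the commutative square of closed immersions
\begin{center}
$\xymatrix{
Z_i' \ar@{=}[r] \ar@{^{(}->}[d] & Z_i' \ar@{^{(}->}[d] \\
\hat Z_j \ar@{^{(}->}[r] & X_j.
}$
\end{center}
With the notation of that proposition ($T=Y=Z_i'$, $Z=\hat Z_j$, $X=X_j$) one computes $Z-T=Z_i$, $X-Y=X_j-Z_i'$ and $X-(Z\cup Y)=X_j-\hat Z_j$, so the third row of the big diagram is exactly the localization long exact sequence for the closed immersion $Z_i\hookrightarrow X_j-Z_i'$, whose connecting morphism is $\partial_{X_j-Z_i',Z_i}$. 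The vertical arrow from row two to row three on the $\EE^{n+1}(Z,*)$ column is the pullback along the open immersion $Z_i\hookrightarrow\hat Z_j$, which under the direct sum decomposition of $\EE^n(\hat Z_j,*)$ is precisely $\mathrm{pr}_i$, while the corresponding vertical on the $\EE^n(X-Z,*)$ column is the identity on $\EE^n(X_j-\hat Z_j,*)$ since $Z\cup Y=\hat Z_j$. The commutativity asserted by Proposition \ref{LocalizationGeom} thus reads $\mathrm{pr}_i\circ \partial_{X_j,\hat Z_j}=\partial_{X_j-Z_i',Z_i}$, which is the desired formula.

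The only genuine obstacle is the bookkeeping of the various complements, the relabelling of indices, and making sure that the open–closed decompositions identify as claimed; once the square above is set up correctly, Proposition \ref{LocalizationGeom} does all the work, and compatibility with the virtual bundle twists is automatic since the whole argument is componentwise.
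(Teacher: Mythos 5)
Your proof is correct, and since the paper dispenses with the argument by writing only ``Straightforward,'' there is no official version to compare it against; your write-up is a reasonable and careful expansion of what the author regarded as routine. The key observation---applying Proposition~\ref{LocalizationGeom} to the square with $T = Y = Z_i'$, $Z=\hat Z_j$, $X=X_j$, so that $Z-T=Z_i$ and $X-(Z\cup Y)=X_j-\hat Z_j$, and reading off commutativity of the square formed by $\partial_i$ and $\partial_{\tilde i}$ with the vertical restrictions---is exactly the right way to capture ``restricting the residue to a single component.'' Two small points worth noting: the vertical $\tilde p'^*$ is the identity precisely because $Z_i'\subset\hat Z_j$ forces $Z\cup Y=\hat Z_j$, which you do flag; and one should observe that $Z_i'\hookrightarrow\hat Z_j$ and $Z_i'\hookrightarrow X_j$ are indeed closed immersions (being clopen in a closed subscheme), so the hypotheses of Proposition~\ref{LocalizationGeom} are met. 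You also implicitly correct two typos in the statement ($\hat Z_i$ should read $\hat Z_j$, and $Z_j'$ should read $Z_i'$), and your interpretation is the intended one.
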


\begin{proof}
Straightforward.
\end{proof}

\section{From homotopy modules to Milnor-Witt cycle modules}
\label{HMandMW}

\subsection{Recollection on Milnor-Witt cycle modules}

We denote by $\mathfrak{F}_k$ the category whose objects are the couple $(E, \mathcal{V}_E)$ where $E$ is a field over $k$ and $\mathcal{V}_E\in \mathfrak{V}(E)$ is a virtual vector space (of finite dimension over $F$). A morphism $(E,\mathcal{V}_E)\to (F, \mathcal{V}_F)$ is the data of a morphism $E\to F$ of fields over $k$ and an isomorphism $\mathcal{V}_E \otimes_E F \simeq \mathcal{V}_F$ of virtual $F$-vector spaces.
\par A morphism $(E,\mathcal{V}_E)\to (F, \mathcal{V}_F)$ in $\mathfrak{F}_k$ is said to be finite (resp. separable) if the field extension $F/E$ is finite (resp. separable).
\par We recall that a Milnor-Witt cycle modules $M$ over $k$ is a functor from $\mathfrak{F}_k$ to the category $\Ab$ of abelian groups  equipped with data 
\begin{description}
\item \ref{itm:D1} (restriction maps) Let $\phi : (E,\mathcal{V}_E)\to (F, \mathcal{V}_F)$ be a morphism in $\mathfrak{F}_k$. The functor $M$ gives a morphism $\phi_*:M(E,\mathcal{V}_E) \to M(F,\mathcal{V}_F)$,
\item  \ref{itm:D2} (corestriction maps) Let $\phi : (E,\mathcal{V}_E)\to (F, \mathcal{V}_F)$ be a morphism in $\mathfrak{F}_k$ where the morphism $E\to F$ is {\em finite}. There is a morphism  $\phi^*:M(F,\Om_{F/k}+\mathcal{V}_F) \to M(E,\Om_{E/k}+\mathcal{V}_E)$,
\item \ref{itm:D3} (Milnor-Witt K-theory action) Let $(E,\mathcal{V}_E)$ and $(E,\mathcal{W}_E)$ be two objects of $\mathfrak{F}_k$. For any element $x$  of $\KMW(E,\mathcal{W}_E)$, there is a morphism 
\begin{center}
$\gamma_x : M(E,\mathcal{V}_E)\to M(E,\mathcal{W}_E+\mathcal{V}_E)$
\end{center}
so that the functor $M(E,-):\mathfrak{V}(E)\to \Ab$ is a left module over the lax monoidal functor $\KMW(E,-):\mathfrak{V}(E)\to \Ab$ (see \cite[Definition 39]{Yetter03} or \cite[Definition 3.1]{Fel18}),
\item \ref{itm:D4} (residue maps) Let $E$ be a field over $k$, let $v$ be a valuation on $E$ and let $\mathcal{V}$ be a virtual projective {$\mathcal{O}_v$-module} of finite type. Denote by $\mathcal{V}_E=\VV \otimes_{\mathcal{O}_v} E$ and $\VV_{\kappa(v)}=\VV \otimes_{\mathcal{O}_v} \kappa(v)$. There is a morphism
\begin{center}
$\partial_v : M(E,\VV_E) \to M(\kappa(v), - \NN_v+\VV_{\kappa(v)}),$
\end{center}
\end{description}   and satisfying rules 
\begin{description}
\item \ref{itm:R1a} (functoriality of restriction maps),
\item \ref{itm:R1b} (functoriality of corestriction maps),
\item \ref{itm:R1c} (base change property),
\item \ref{itm:R2a}, \ref{itm:R2b}, \ref{itm:R2c} (projection formulae),
\item \ref{itm:R3a} (ramification formula),
\item \ref{itm:R3b}, \ref{itm:R3c}, \ref{itm:R3d}, \ref{itm:R3e} (compatibility between residue maps and the first three data),
\item \ref{itm:R4a} (compatibility with orientations).
\end{description}

Moreover, a Milnor-Witt cycle module $M$ satisfies axioms \ref{itm:FD} (finite support of divisors) and \ref{itm:C} (closedness) that enable us to define a complex $(C_p(X,M,\VV_X),d_p)_{p\in \ZZ}$ for any scheme $X$ and virtual bundle $\VV_X$ over $X$ where
\begin{center}

$C_p(X,M,\VV_X)=\bigoplus_{x\in X_{(p)}}
 M(\kappa(x),\Omega_{\kappa(x)/k}+\VV_x)$
\end{center}
and the differential $d_p=(\partial^x_y)_{(x,y) \in X_{(p)}\times  X_{(p-1)}}	$ is defined as follows (see \cite[Section 4]{Fel18}).

\begin{Par} 
First, for $x$ a point of $X$, denote by 
\begin{center}
$M(x, \VV_X)=M(\kappa(x), \Om_{\kappa(x)/k}+\VV_x)$.
\end{center}
\par If $X$ is normal, then for any $x\in X^{(1)}$ the local ring of $X$ at $x$ is a valuation ring so that \ref{itm:D4} gives us a map $\partial_x: M(\xi, \VV_X) \to M(x, \VV_X)$ where $\xi$ is the generic point of $X$.

\label{2.0.1}

 If $X$ is any scheme, let $x,y$ be any points in $X$. We define a map
\begin{center}
$\partial^x_y:M(x,\VV_X) \to M(y,\VV_X)$
\end{center}
as follows. Let $Z=\overline{ \{x\}}$. If $y\not \in Z$, then put $\partial^x_y=0$. If $y\in Z$, let $\tilde{Z}\to Z$ be the normalization and put
\begin{center}
$\partial^x_y=\displaystyle \sum_{z|y} \cores_{\kappa(z)/\kappa(y)}\circ \, \partial_z$
\end{center}
with $z$ running through the finitely many points of $\tilde{Z}$ lying over $y$.

\end{Par}
\begin{Par} \label{FiveBasicMapsArticle2}
The complex $(C_p(X,M,\VV_X),d)_{p\geq 0}$ is called the {\em Milnor-Witt complex of cycles on $X$ with coefficients in $M$} and we denote by $A_p(X,M,\VV_X)$ the associated homology groups (called {\em Chow-Witt groups with coefficients in $M$}). We can define five basic maps on the complex level (see \cite[Section 4]{Fel18}):
\begin{description}
\item[Pushforward] Let $f:X\to Y$ be a $k$-morphism of schemes, let $\VV_Y$ be a virtual bundle over the scheme $Y$. The data \ref{itm:D2} induces a map
\begin{center}

$f_*:C_p(X,M,\VV_X)\to C_p(Y,M, \VV_Y)$.
\end{center}
\item[Pullback] Let $g:X\to Y$ be an essentially smooth morphism of schemes. Let $\VV_Y$ a virtual bundle over $Y$. Suppose $X$ connected (if $X$ is not connected, take the sum over each connected component) and denote by $s$ the relative dimension of $g$. The data \ref{itm:D1} induces a map
\begin{center}
$g^*:C_p(Y,M,\VV_Y) \to C_{p+s}(X,M,- \LL_{X/Y}+\VV_X)$.
\end{center}
 
 \item[Multiplication with units] Let $X$ be a scheme of finite type over $k$ with a virtual bundle $\VV_X$. Let $a_1,\dots, a_n$ be global units in $\mathcal{O}_X^*$. The data \ref{itm:D3} induces a map	
 \begin{center}
 $[a_1,\dots, a_n]:C_p(X,M,\VV_X) \to C_p(X,M,\ev{n}+\VV_X)$.
 \end{center}
 
 \item[Multiplication with $\eeta$]
 Let $X$ be a scheme of finite type over $k$ with a virtual bundle $\VV_X$. The Hopf map $\eeta$ and the data \ref{itm:D3} induces a map
 \begin{center}
 
 $\eeta:C_p(X,M,\VV_X)\to C_p(X,M,-\AAA^1_X+\VV_X)$.
 \end{center}
 
 \item[\namedlabel{itm:Bmaps}{Boundary map}]     
 Let $X$ be a scheme of finite type over $k$ with a virtual bundle $\VV_X$, let $i:Z\to X$ be a closed immersion and let $j:U=X\setminus Z \to X$ be the inclusion of the open complement. The data \ref{itm:D4} induces (as in \ref{2.0.1}) a map
 \begin{center}

 $\partial=\partial^U_Z:C_p(U,M,\VV_U) \to C_{p-1}(Z,M,\VV_Z)$.
 \end{center}

\end{description} 

 These maps satisfy the usual compatibility properties (see \cite[Section 5]{Fel18}). In particular, they induce maps $f_*,g^*, [u], \eeta, \partial^U_Z$ on the homology groups $A_*(X,M,*)$.
 
 \end{Par}

 \par We end this subsection with a lemma illustrating the importance of the rule \ref{itm:R4a}. This will be useful in Section \ref{EquivalenceOfCat}.
 \begin{Lem} \label{LemTrivialization}
  Let $M$ be a Milnor-Witt cycle module over $k$. For any field $E/k$ and any virtual vector bundle $\mathcal{V}_E$ over $E$, we have a canonical isomorphism
 \begin{center}
 
 $M(E,\VV_E)\simeq M(E,\langle n \rangle)\otimes_{\ZZ[E^{\times}]}\ZZ[\det(\VV_E)^{\times}]$
 \end{center}
 where $n$ is the rank of $\VV_E$.
  \end{Lem}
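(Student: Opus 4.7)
The plan is to exploit the fact that, over a field $E$, every virtual vector bundle $\VV_E$ of rank $n$ becomes isomorphic to $\langle n\rangle$ once we choose a trivialization of its determinant line, and that the ambiguity in this choice is precisely controlled by the orientation rule \ref{itm:R4a}.

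First I would fix a nonzero element $\ell\in \det(\VV_E)^{\times}$, which is equivalent to choosing an isomorphism $\phi_\ell: \langle n\rangle \xrightarrow{\sim} \VV_E$ in the Picard groupoid $\VVV(E)$ (since any two virtual bundles over a field with the same rank and trivialized determinant are canonically isomorphic). Applying the functoriality \ref{itm:D1}, this gives a restriction isomorphism $(\phi_\ell)_*: M(E,\langle n\rangle)\xrightarrow{\sim} M(E,\VV_E)$. I would then define
\[
\psi: M(E,\langle n\rangle)\otimes_{\ZZ[E^{\times}]}\ZZ[\det(\VV_E)^{\times}] \longrightarrow M(E,\VV_E),\qquad x\otimes \ell \longmapsto (\phi_\ell)_*(x).
\]

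The key step is checking that $\psi$ is well-defined with respect to the balanced tensor product. If $u\in E^{\times}$, then $\phi_{u\ell}$ differs from $\phi_\ell$ by the automorphism of $\langle n\rangle$ scaling the determinant by $u$. Rule \ref{itm:R4a} (compatibility with orientations) asserts precisely that such an automorphism acts on $M(E,\langle n\rangle)$ via the $\KMW$-action of $\langle u\rangle\in \KMW_0(E)$. Therefore $(\phi_{u\ell})_*(x) = (\phi_\ell)_*(\langle u\rangle\cdot x)$, which matches the identification $x\otimes u\ell = \langle u\rangle x\otimes \ell$ in the tensor product (recall that the $\ZZ[E^\times]$-module structure on $M(E,\langle n\rangle)$ is by the $\KMW_0$-action $u\mapsto \langle u\rangle$, while on $\ZZ[\det(\VV_E)^{\times}]$ it is the tautological torsor action). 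This verifies that $\psi$ descends to the balanced tensor product.

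Finally, to see that $\psi$ is an isomorphism, I would construct an explicit inverse: after fixing one reference trivialization $\ell_0$, the map $y\mapsto (\phi_{\ell_0})_*^{-1}(y)\otimes \ell_0$ gives a two-sided inverse, and the same rule \ref{itm:R4a} shows that this inverse is independent of the choice of $\ell_0$, so it is canonical. The main obstacle is purely bookkeeping: one must keep straight the two different $\ZZ[E^{\times}]$-module structures (the $\KMW_0$-action on $M$ versus the scaling torsor action on $\det(\VV_E)^\times$) and ensure that the sign/unit convention in \ref{itm:R4a} produces $\langle u\rangle$ (not $\langle u^{-1}\rangle$ or $\epsilon\langle u\rangle$) so that the balancing relation is exactly what the tensor product encodes.
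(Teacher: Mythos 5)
Your proof is correct and follows the same route as the paper: fix a trivialization of $\det(\VV_E)$ to produce the isomorphism via \ref{itm:D1}, then invoke \ref{itm:R4a} to show independence of the choice. You simply spell out the balancing-relation computation and the explicit inverse that the paper leaves implicit ("in an obvious way", "one can check"), and you flag — rightly — the convention issue that $\Delta(\Theta)=\langle\det\Theta\rangle$ must match the torsor action on $\det(\VV_E)^{\times}$ for the tensor relation to hold.
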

  \begin{proof}
  
  Any element $u\in \det(\VV_E)^{\times}$ defines an isomorphism
 \begin{center}
 
 $\Theta_u:M(E,\VV_E)\simeq M(E,\ev{n})\otimes_{\ZZ[E^{\times}]}\ZZ[\det(\VV_E)^{\times}]$
 \end{center}
 in a obvious way thanks to \ref{itm:D1}. One can check that this map does not depend on the choice of $u$ according to rule \ref{itm:R4a}.
  
  \end{proof}
  Note that this lemma is true for Milnor-Witt K-theory $\KMW$ by definition.

\subsection{Cycle premodule structure} \label{MWassociated}

Let $\EE\in \SH(S)$ be a motivic ring spectrum. For any field $E$ and any virtual vector space $\VV_E$ of rank $r$ over $E$, we put
\begin{center}

$\EEE(E,\VV_E)=\EE^{-r}(X,\VV_{X})=\Hom_{\SH(X)}(\un_X,\EE_X\otimes \Tho_X(\VV_X)[-r])$,
\end{center}
where $X=\Spec E$ (recall Definition \ref{DefBivTheory}).
We prove that this defines a functor $\EEE:\mathfrak{F}_k\to \Ab$ which is a Milnor-Witt cycle module. Indeed we have the following data:

\begin{description}
\item[\namedlabel{itm:D1}{(D1)}] Let $\phi : (E,\mathcal{V}_E)\to (F, \mathcal{V}_F)$ be a morphism in $\mathfrak{F}_k$. The cohomology theory $\EE^*(-,*)$ being contravariant (see \ref{contravarianceGeom}), we obtain a map $\phi_*:\EEE(E,\mathcal{V}_E) \to \EEE(F,\mathcal{V}_F)$.
\item [\namedlabel{itm:D2}{(D2)}] Let $\phi : (E,\mathcal{V}_E)\to (F, \mathcal{V}_F)$ be a morphism in $\mathfrak{F}_k$ where the morphism $E\to F$ is {\em finite}. The (twisted) covariance described in \ref{covarianceGeom} leads to a morphism  $\phi^*:\EEE(F,\Om_{F/k}+\mathcal{V}_F) \to \EEE(E,\Om_{E/k}+\mathcal{V}_E)$.
\item [\namedlabel{itm:D3}{(D3)}] Let $(E,\mathcal{V}_E)$ and $(E,\mathcal{W}_E)$ be two objects of $\mathfrak{F}_k$. For any element $x$  of $\KMW(E,\mathcal{W}_E)$, there is a morphism 
\begin{center}
$\gamma_x : \EEE(E,\mathcal{V}_E)\to \EEE(E,\mathcal{W}_E+\mathcal{V}_E)$
\end{center}
given by composition on the left by $x$ (as in \ref{MWactionGeom}) since we can identify $\KMW(E,\mathcal{W}_E)$ with $\hat{\un}(E,\mathcal{W}_E)$ (see Example \ref{MainExampleCoh}). We can check that the functor $\EEE(E,-):\mathfrak{V}(E)\to \Ab$ is then a left module over the lax monoidal functor $\KMW(E,-):\mathfrak{V}(E)\to \Ab$ (see \cite{Yetter03} Definition 39).

\item [\namedlabel{itm:D4}{(D4)}] Let $E$ be a field over $k$, let $v$ be a valuation on $E$ and let $\mathcal{V}$ be a virtual projective {$\mathcal{O}_v$-module} of finite type. As before, denote by $\mathcal{V}_E=\VV \otimes_{\mathcal{O}_v} E$ and $\VV_{\kappa(v)}=\VV \otimes_{\mathcal{O}_v} \kappa(v)$. There is a morphism
\begin{center}
$\partial_v : \EEE(E,\VV_E) \to \EEE(\kappa(v), - \NN_v+\VV_{\kappa(v)})$
\end{center}
given by the long exact sequence \ref{LocalizationMap} where the closed immersion is
\begin{center}

$\xymatrix{
\Spec \kappa(v) \ar@{^{(}->}[r] & \Spec \mathcal{O}_v
}$.
\end{center}

\end{description}

It is clear that the data \ref{itm:D1} and \ref{itm:D2} are functorial so that the two following rules hold:
\begin{description}\item [\namedlabel{itm:R1a}{(R1a)}] Let $\phi$ and $\psi$ be two composable morphisms in $\mathfrak{F}_k$. One has
\begin{center}
 $(\psi\circ \phi)_*=\psi_*\circ \phi_*$.
\end{center}
\item [\namedlabel{itm:R1b}{(R1b)}]  Let $\phi$ and $\psi$ be two composable finite morphisms in $\mathfrak{F}_k$. One has
\begin{center}
 $(\psi\circ \phi)^*=\phi^*\circ \psi^*$.
\end{center}

\end{description}

The base change theorem \ref{BaseChangelci} leads to the following rule \ref{itm:R1c}:
\begin{description}

\item [\namedlabel{itm:R1c}{(R1c)}] Consider $\phi:(E,\VV_E)\to (F,\VV_F)$ and $\psi:(E,\VV_E)\to (L,\VV_L)$ with $\phi$ finite and $\psi$ separable. Let $R$ be the ring $F\otimes_E L$. For each $p\in \Spec R$, let $\phi_p:(L,\VV_L)\to (R/p, \VV_{R/p})$ and $\psi_p:(F,\VV_F)\to (R/p, \VV_{R/p})$ be the morphisms induced by $\phi$ and $\psi$. One has
\begin{center}
$\psi_*\circ \phi^*=\displaystyle \sum_{p\in \Spec R} (\phi_p)^*\circ (\psi_p)_*$.
\end{center}

\end{description}

The general formalism of Fulton-McPherson gives the usual projection formulas (see also \cite[1.2.8]{DegBiv17}):
\begin{description}
\item [\namedlabel{itm:R2}{(R2)}] Let $\phi : (E,\VV_E)\to (F,\VV_F)$ be a morphism in $\mathfrak{F}_k$, let $x$ be in $\KMW (E,\mathcal{W}_E)$ and $y$ be in $\KMW (F,\Om_{F/k}+\mathcal{W'}_F)$ where $(E,\mathcal{W}_E)$ and $(F,\mathcal{W}'_F)$ are two objects of $\mathfrak{F}_k$.
\item [\namedlabel{itm:R2a}{(R2a)}] We have $\phi_* \circ \gamma_x= \gamma_{\phi_*(x)}\circ \phi_*$.
\item [\namedlabel{itm:R2b}{(R2b)}] Suppose $\phi$ finite. We have $\phi^*\circ \gamma_{\phi_*(x)}=\gamma_x \circ \phi^*$.
\item [\namedlabel{itm:R2c}{(R2c)}] Suppose $\phi$ finite. We have $\phi^*\circ \gamma_y \circ \phi_*= \gamma_{\phi^*(y)}$.

\end{description}

We now prove the remaining rules.

\begin{description}
\item	[\namedlabel{itm:R3a}{(R3a)}]
 
Let $E\to F$ be a field extension and $w$ be a valuation on $F$ which restricts to a non trivial valuation $v$ on $E$ with ramification index $e$. Let $\VV$ be a virtual {$\mathcal{O}_v$-module} so that we have a morphism ${\phi:(E,\VV_E)\to (F,\VV_F)}$ which induces a morphism
\begin{center}
 ${\overline{\phi}:(\kappa(v),-\NN_v+\VV_{\kappa(v)})\to ( \kappa(w),-\mathcal{N}_w+\VV_{\kappa(w)}})$. 
 
 \end{center}
 We have 
 
 \begin{center}
 
$\partial_w \circ \phi_*=\gamma_{e_\epsilon} \circ \overline{\phi}_* \circ \partial_v$. 
 \end{center}	 
\end{description}
\begin{proof} Consider the commutative diagram
\begin{center}

$\xymatrix{
\mathcal{O}_v \ar[r] \ar[d] & 
\mathcal{O}_w \ar[d] \ar@/^2pc/[ddr] & {} \\
   \kappa(v) \ar[r] \ar@/_2pc/[drr] &
   \kappa(v)\otimes_{\mathcal{O}_v} \mathcal{O}_w= \mathcal{O}_w/\mathfrak{m}_w^e \ar[rd]  & {} \\
                   &                   & \kappa(w).                 }$
\end{center}
which yields a topologically cartesian square
\begin{center}

$\xymatrix{
T \ar@{^{(}->}[r]^k  \ar[d]_q \ar@{}[rd]|-{\Delta} & Y \ar[d]^p \\
Z \ar@{^{(}->}[r]_i & X 
}$
\end{center}
where $k$ and $i$ are regular closed immersions of codimension 1 and $T=\Spec \kappa(w)$. Keeping the notations used in \ref{RamificationFormula}, the map $\nu^{(e)}$ induces a morphism of Thom spaces 
\begin{center}

$\Tho (N_TY)\to \Tho(q^*N_ZX)$
\end{center}
which corresponds after delooping to the quadratic form $e_\epsilon$ in $\kMW(\kappa(w))$ (see also \cite{Caz08}). We conclude thanks to Theorem \ref{RamificationGeom}.
\end{proof}

\begin{description}
\item [\namedlabel{itm:R3b}{(R3b)}]
Let $\phi: E\to F$ be a finite morphism of fields, let $v$ be a valuation over $E$ and let $\VV$ be a virtual vector bundle over $\mathcal{O}_v$. For each extension $w$ of $v$, we denote by $\phi_w:\kappa(v)\to \kappa(w)$ the map induced by $\phi$. We have
\begin{center}

$\partial_v\circ \phi^*=\sum_w \phi^*_w\circ \partial_w$.
\end{center}

\end{description}

\begin{proof}

There exists a semilocal ring $A$ over $\mathcal{O}_v$ such that the set of maximal ideals consists of the ideals $\mathfrak{m}_w$ where $w$ is an extension of $v$. 
\par Denote by $T=\oplus_{w|v} \Spec \kappa(w)$, $Y=\Spec A$, $Z=\Spec \kappa(v)$ and $X=\Spec \mathcal{O}_v$ so that we have the following commutative diagram formed by two cartesian squares:
\begin{center}

$\xymatrix{
T \ar@{^{(}->}[r]^k \ar[d]_g & Y \ar[d]^f & Y-T \ar@{_{(}->}[l]_{k'} \ar[d]^h \\
Z \ar@{^{(}->}[r]_i & X & X-Z \ar@{_{(}->}[l]^{i'}
}$
\end{center}
where $k,i$ are the canonical closed immersions with complementary open immersions $k',i'$ respectively and where $f,g,h$ are the canonical maps.
\par According to Proposition \ref{LocalizationGeom}, this leads to the following commutative diagram:
\begin{center}

$
\xymatrix{
\EE^r(T,\LL_T+\VV_T) \ar[r]^-{k_*} \ar[d]_{g_*} &
\EE^r(Y,\LL_Y+\VV_Y) \ar[r]^-{k'} \ar[d]^{f_*} &
\EE^r(Y-T,\LL_{Y-T}+\VV_{Y-T}) \ar[r]^-{\partial} \ar[d]^{h_*} \ar@{}[dr]|-{(*)} &
\EE^{r+1}(T,\LL_T+\VV_T) \ar[d]^{g_*} \\
\EE^r(Z,\LL_Z+\VV_Z) \ar[r]^-{i_*}  &
\EE^r(X,\LL_X+\VV_X) \ar[r]^-{i'}  &
\EE^r(X-Z,\LL_{X-Z}+\VV_{X-Z}) \ar[r]^-{\partial}  &
\EE^{r+1}(Z,\LL_Z+\VV_Z)  
}$
\end{center}
where $r$ is the rank of $\VV_X$. The rule \ref{itm:R3b} follows from the commutativity of the square $(*)$.

\end{proof}

\begin{description}
\item [\namedlabel{itm:R3c}{(R3c)}] Let $\phi : (E,\VV_E)\to (F,\VV_F)$ be a morphism in $\mathfrak{F}_k$ and let $w$ a valuation on $F$ which restricts to the trivial valuation on $E$. Then 
\begin{center}
$\partial_w \circ \phi_* =0$.
\end{center}

\end{description}

\begin{proof}
Consider the closed inclusion $i:Z\to X$ and its open complementary map $j:U\to X$ where $X=\Spec \mathcal{O}_v$, $Z=\Spec \kappa(w)$ and $U=\Spec F$. According to the long exact sequence \ref{LocalizationMap}, the composite
\begin{center}
$\xymatrix{
\EE^r(X,\VV_X) \ar[r]^{j^*} & 
\EE^r(U,\VV_U) \ar[r]^-{\partial_{Z,X}} & 
\EE^{r+1}(Z,\LL_{Z/X}+\VV_Z)
}$

\end{center}
is zero. The result follows from the fact that the map $\Spec E \to \Spec F$ factors through $j$ since $w$ restricts to the trivial valuation on $E$.

\end{proof}

\begin{description}

\item [\namedlabel{itm:R3d}{(R3d)}] Let $\phi$ and $w$ be as in \ref{itm:R3c}, and let $\overline{\phi}:(E,\VV_E)\to (\kappa(w),\VV_{\kappa(w)})$ be the induced morphism. For any prime $\pi$ of $v$, we have
\begin{center}
$\partial_w \circ \gamma_{[-\pi]}\circ \phi_*= \overline{\phi}_*$.
\end{center}

\end{description}
\begin{proof}

Denote by $Z=\Spec \kappa(w)$, $U=\Spec F$, $X=\Spec \mathcal{O}_w$ and $Y=\Spec E$ and consider the induced maps as defined in the following commutative diagram:
\begin{center}

$\xymatrix{
Z \ar@{^{(}->}[r]^i \ar[rd]_{\bar{f}} & 
X \ar[d]|-{\tilde{f}} &
U \ar@{_{(}->}[l]_j \ar[ld]^{{f}} \\
{} &
Y. &
{}
}$
\end{center}
We want to prove that the following diagram is commutative:

\begin{center}

$\xymatrix{
\EE^{-r}(Y,\VV_Y) \ar[d]_{\bar{f}} \ar[r]^{f^*}&
 \EE^{-r-1}(U,\VV_U) \ar[d]^{\gamma_{[-\pi]}} \\
\EE^{-r}(Z,\VV_Z)  &
\EE^{-r}(U,\AAA^1_U+\VV_U)    \ar[l]^-{\partial} 
}$

\end{center}
where we use the isomorphism $\NN_ZX\simeq \AAA^1_Z$ defined by the choice of prime $\pi$. We can split this diagram into the following one:
\begin{center}
$\xymatrix{
{} &&
\EE^{-r}(Y,\VV_Y) \ar[lld]_{\tilde{f}^*} \ar[d]^{h^*} \ar[rrd]^{f^*} &&
{} \\
\EE^{-r}(X,\VV_X)  \ar[dd]_{i^*} \ar[rr]^{q^*}  & \ar@{}_/-2pt/{(1)}[dd] &
\EE^{-r}(D_ZX - N_ZX,\VV) 
\ar@{}[rrd]|-{(2)}  \ar[d]|-{\gamma_{[-\pi]}} \ar[rr]^{d^*}
 &&
\EE^{-r}(U,\VV_U) \ar[d]^{\gamma_{[-\pi]}} \\
{} &&
\EE^{-r-1}(D_ZX-N_ZX,\AAA^1_U+\VV) 
\ar@{}[rrd]|-{(3)}  \ar[rr]^{d^*} \ar[d]^\partial &&
\EE^{-r-1}(U,\AAA^1_U+\VV_U) \ar[d]^\partial \\
\EE^{-r}(Z,\VV_Z)  &&
\EE^{-r}(N_ZX,\VV) \ar[ll]_{\simeq}^{p^*} \ar[rr]_{p^*}^{\simeq} &&
\EE^{-r}(Z,\VV)
}$
\end{center}
where the morphisms $d:U\to (D_ZX - N_ZX)$, $q:D_ZX - N_ZX\simeq \Gm \times X \to X$ and $p:N_ZX\to Z$ are the canonical maps used in the deformation to the normal cone.
\\ We can check that the square (1), (2) and (3) are commutative (same proof as \cite[Proposition 2.6.5]{Deg05}), hence the whole diagram is commutative by functoriality of the pullback maps.

\end{proof}

\begin{description}
\item [\namedlabel{itm:R3e}{(R3e)}] Let $E$ be a field over $k$, $v$ be a valuation on $E$ and $u$ be a unit of $v$. Then
\begin{center}
$\partial_v \circ \gamma_{[u]}=\gamma_{\epsilon[\overline{u}]} \circ \partial_v$ and
\\ $\partial_v \circ \gamma_{\eeta} =\gamma_{\eeta} \circ \partial_v$.
\end{center}

		\end{description}
		\begin{proof}
		This follows from Proposition \ref{R3eGeom} since $\epsilon \eeta = \eeta$ (where $\epsilon=-\ev{-1}$).

		\end{proof}
 
	\begin{description}
	\item [\namedlabel{itm:R4a}{(R4a)}] Let $(E,\VV_E)\in \mathfrak{F}_k$ and let $\Theta$ be an automorphism of $\VV$. Denote by $\Delta$ the canonical map from the group of automorphism of $\VV_E$ to the group $\kMW(E,0)$. Then
	 \begin{center}
	 
	 $\Theta_*=\gamma_{\Delta(\Theta)}:\EEE(E,\VV_E)\to \EEE(E,\VV_E)$.
	 \end{center}

	 \end{description}
	 
	 \begin{proof}
	 
	 One reduce to the case where $\EE=\un$. In this case, $\EEE$ is (isomorphic to) the Milnor-Witt K-theory $\KMW$ (see Example \ref{MainExampleCoh}), hence the result.
	 \end{proof}
	We have proved that $\EEE$ is a Milnor-Witt cycle premodule. In the following subsection, we prove that it satisfies axioms \ref{itm:FD} and \ref{itm:C}.
\subsection{Cycle module structure}
Put $S=\Spec(k)$. Let $f:X\to S$ be a scheme and $\VV_X$ be a virtual vector bundle over $X$. Let $\EE_S\in \SH(S)$ be a motivic spectrum. Recall that we denote by $\EE_X$ the spectrum $f^*(\EE_S)$. The purpose of this subsection is to prove that the Milnor-Witt cycle premodule $\EEE$ is in fact a cycle module. Roughly speaking, this means that the graded group $C^*(X,\EEE,*)$ forms a complex.
\par Consider a flag $\mathfrak{Z}=(Z_p)_{p\in \ZZ}$ over $X$, that is a sequence a closed subschemes of $X$ such that
\begin{center}

$\varnothing \subset Z_1 \subset Z_2 \subset \dots \subset Z_n \subset X $
\end{center}
where $\dim Z_p \leq p$.
\par For an integer $p\in \ZZ$, put $U_p=X-Z_p$ and $T_p=Z_p-Z_{p-1}$. Consider the canonical maps $j_p:U_p \subset U_{p-1}$ and $i_p:T_p\subset U_{p-1}$. 
\par For $p,q\in \ZZ$, denote by
\begin{center}

$E^{1,\mathfrak{Z}}_{p,q}=\EE^{q-p}(T_p,\LL_{T_p}+\VV_{T_p})
=[\mathbbm{1}_{T_p},\EE_{T_p}\otimes \Tho_{T_p}(\LL_{T_p}+\VV_{T_p})[q-p]]_{T_p} $
\end{center}
and
\begin{center}

$D^{1,\mathfrak{Z}}_{p,q}=\EE^{q-p-1}(U_p,\LL_{U_p}+\VV_{U_p})=
[\un_{U_p}, \EE_{U_p}\otimes \Tho_{U_p}(\LL_{U_p}+\VV_{U_p})[q-p-1]]_{U_p}.$
\end{center}
According to \ref{LocalizationMap}, we have a long exact sequence 

\begin{center}

$\xymatrix{
 \dots \ar[r] & D^{1,\mathfrak{Z}}_{p-1,q+1} \ar[r]^-{j_p^*} & D^{1,\mathfrak{Z}}_{p,q} \ar[r]^{\partial_p} & E^{1,\mathfrak{Z}}_{p,q}\ar[r]^{i_{p,*}} & D^{1,\mathfrak{Z}}_{p-1,q} \ar[r] & \dots
 }$
\end{center}
so that $(D^{1,\mathfrak{Z}}_{p,q},E^{1,\mathfrak{Z}}_{p,q})_{p,q\in \ZZ}$ is an exact couple. By the general theory (see \cite{McCleary00}, Chapter 3), this defines a spectral sequence. In particular, we have canonical differential maps $d$ which are well-defined and satisfying $d\circ d=0$. Moreover, we can prove that this spectral sequence converges to ${\EE^{p+q}(X,\LL_{X/S}+\VV_X)}$ (because the $E_{p,q}^1$-term is bounded) but we do not need this fact.
\par For $p,q\in \ZZ$, denote by
\begin{center}
$D_{p,q}^{1,X}=\colim_{\mathfrak{Z}\in \Flag(X)^{op}} D_{p,q}^{1,\mathfrak{Z}}$,\\
$E_{p,q}^{1,X}=\displaystyle \operatorname{colim}_{\mathfrak{Z}\in \Flag(X)^{op}} E^{1,\mathfrak{Z}}_{p,q}$
\end{center}
where the colimit is taken over the flags $\mathfrak{Z}$ of $X$. Since the colimit is filtered, we get an exact couple and a spectral sequence.

We need to compute this spectral sequence. This is done in the following.
\begin{The} \label{SpectralSequenceComputationBis}

For $p,q\in \ZZ$, we have a canonical isomorphism
\begin{center}

$E_{p,q}^{1,X}\simeq \displaystyle \bigoplus_{x\in X_{(p)}}[\un_{\kappa(x)},\EE_{\kappa(x)}\otimes \Tho_{\kappa(x)}(\LL_{\kappa(x)}+\VV_{\kappa(x)})[q-p]]_{\kappa(x)}$.
\end{center}
In particular, if $r$ is the rank of $\VV_X$, then
\begin{center}

$E_{p,-r}^{1,X}\simeq C_p(X,\EEE,\VV_X)$.
\end{center}
\end{The}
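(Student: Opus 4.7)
The plan is to identify the $E^1$-page of this niveau-type spectral sequence with the Milnor-Witt cycle complex, following the strategy of Bloch-Ogus and Rost adapted to the quadratic setting. The key geometric observation is that refining a flag $\mathfrak{Z}$ amounts to two independent operations: enlarging $Z_p$ introduces more generic points of dimension $p$ in $T_p$, while enlarging $Z_{p-1}$ shrinks $T_p$ near these generic points. In the colimit over $\Flag(X)^{op}$, these conspire to produce a direct sum indexed by $X_{(p)}$ of cohomology groups of the residue fields.

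First, for a fixed flag $\mathfrak{Z}$, I would apply Proposition \ref{Prop.1.36} to decompose $\EE^{q-p}(T_p,\LL_{T_p}+\VV_{T_p})$ as a direct sum over the connected components of $T_p$. Components whose generic point has dimension strictly less than $p$ are absorbed in the colimit, since one can always refine the flag by adjoining such a component to $Z_{p-1}$, thereby killing it in $T_p$ for the refined flag. Second, I would argue by cofinality that for each $x\in X_{(p)}$ the subsystem of flags such that $\overline{\{x\}}\subseteq Z_p$ and $\overline{\{x\}}\not\subseteq Z_{p-1}$ is cofinal in $\Flag(X)^{op}$. For such a flag, the connected component of $T_p$ containing $x$ is an open subscheme $U_x \subset \overline{\{x\}}$, and further enlarging $Z_{p-1}$ by adjoining $(p-1)$-dimensional closed subsets of $\overline{\{x\}}$ avoiding $x$ causes $U_x$ to shrink; in the limit one obtains $\colim_{U_x} U_x = \Spec \kappa(x)$.

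Third, by continuity of the six-functor formalism for schemes essentially of finite type over $k$ (see the conventions of Section \ref{Conventions}), the cohomology theory commutes with such filtered colimits along open immersions, yielding
\[
\colim_{U_x} \EE^{q-p}(U_x,\LL_{U_x}+\VV_{U_x}) \simeq \EE^{q-p}(\Spec\kappa(x),\LL_{\kappa(x)}+\VV_{\kappa(x)}).
\]
Combining the three steps gives the first isomorphism. For the second assertion, if $r=\rk\VV_X$ and $x\in X_{(p)}$, then $\LL_{\kappa(x)}+\VV_{\kappa(x)}$ has rank $p+r$, and $\LL_{\kappa(x)/k}\simeq \Om_{\kappa(x)/k}$ since $\Spec\kappa(x)$ is essentially smooth over $k$; by the definition of $\EEE$, each summand is exactly $\EEE(\kappa(x),\Om_{\kappa(x)/k}+\VV_x)$, so the direct sum is $C_p(X,\EEE,\VV_X)$.

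The main obstacle is the cofinality argument in the second step: the transition maps in $\Flag(X)^{op}$ do not send $T_p$ directly to $T'_p$ in any naive way, and one must check that a well-chosen cofinal subsystem of flags (for instance, those indexed by finite sets of points together with a choice of closed $(p-1)$-dimensional neighborhood in each $\overline{\{x\}}$) produces a direct system whose colimit decomposes as claimed. Once this geometric bookkeeping is in place, the remainder is a formal application of continuity together with the decomposition provided by Proposition \ref{Prop.1.36}.
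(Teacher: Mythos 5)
Your approach is the standard coniveau/niveau spectral-sequence identification (Bloch--Ogus, Rost, D\'eglise), and this is indeed what the paper does: the paper's proof is a one-line delegation to \cite[Theorem 8.2]{Fel18}, and the argument there is exactly the colimit-over-flags computation you sketch. So the strategy matches; you have simply expanded the reference into a skeleton of the actual argument. A few remarks.

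First, Proposition \ref{Prop.1.36} is not quite the right citation for the decomposition of $\EE^{q-p}(T_p,\cdot)$ into a direct sum over connected components of $T_p$. That decomposition is just additivity of cohomology for finite disjoint unions, needing no special proposition. Proposition \ref{Prop.1.36} is the stronger compatibility statement that the \emph{residue maps and pushforwards} in the localization sequence respect such decompositions, and it is needed in the next theorem (\ref{DifferentialsComputation}) to identify the differentials, but not here for the $E^1$-term itself.

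Second, the cofinality bookkeeping you flag as the ``main obstacle'' is genuinely the heart of the argument, and the way you have set it up is slightly more delicate than it needs to be. Rather than arguing that a single cofinal subsystem simultaneously isolates \emph{all} $p$-dimensional points, the cleaner route (which is the one in the cited proof) is to construct, for each fixed $x\in X_{(p)}$, a natural map
\[
\colim_{\substack{U\ni x \\ U\text{ open in }\overline{\{x\}}}} \EE^{q-p}\bigl(U,\LL_U+\VV_U\bigr) \longrightarrow E^{1,X}_{p,q},
\]
check that these maps assemble into a map from the direct sum, and then prove it is an isomorphism by exhibiting an inverse on each ``nice enough'' flag (one where the irreducible components of $T_p$ are pairwise disjoint, which can always be arranged by enlarging $Z_{p-1}$). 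This avoids having to describe one giant cofinal subsystem.

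Third, you invoke continuity of the cohomology theory along filtered limits of schemes (``by continuity of the six-functor formalism\ldots''). This is true in $\SH$ and is what makes the colimit equal $\EE^{q-p}(\Spec\kappa(x),\cdot)$, but it is worth being explicit that this step requires the conventions of \ref{Conventions} (schemes essentially of finite type over $k$ with affine \'etale transition maps) precisely so that $\Spec\kappa(x)$ appears as a pro-object of smooth $X$-schemes to which continuity applies. Also note that the individual $T_p$ need not be smooth, so $\LL_{T_p}$ only makes sense as a virtual bundle after shrinking to a smooth open locus; this is swallowed by the colimit because $k$ is perfect and so the generic points are smooth, but it deserves a word.

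The rank computation for the second assertion is correct: $\Om_{\kappa(x)/k}$ has rank $p$ over $\kappa(x)$ because $\kappa(x)/k$ is separable of transcendence degree $p$, and $\LL_{\kappa(x)/k}\simeq\Om_{\kappa(x)/k}$ by essential smoothness, which matches the definition of $\EEE$ and of $C_p(X,\EEE,\VV_X)$.
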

\begin{proof}
The proof is the same as \cite[Theorem 8.2]{Fel18}.
\end{proof}

\begin{The} \label{DifferentialsComputation}
Assume $X$ is a smooth scheme. Keeping the previous notations, the following diagram is commutative:
\begin{center}

$\xymatrix{
E^{1,X}_{p,q} \ar[r]^{d_{p,q}} \ar[d] & E^{1,X}_{p-1,q} \ar[d] \\
\EE^{q-p}(\Spec(\kappa(y),\LL_y+\VV_y) \ar[r]^{\partial^x_y} & \EE^{q-p+1}(\Spec \kappa(x),\LL_x+\VV_x)
}$
\end{center}
where $d_{p,q}$ is the differential canonically associated to the spectral sequence and where the vertical maps are the canonical projections associated to isomorphism of Theorem \ref{SpectralSequenceComputationBis}.
\end{The}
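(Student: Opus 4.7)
The plan is to compute, for every pair $(x,y) \in X_{(p)} \times X_{(p-1)}$, the $(x,y)$-matrix coefficient of $d_{p,q}$ and match it with the map $\partial^x_y$ of \ref{2.0.1}. First I would localize: by naturality of the localization long exact sequence (Proposition~\ref{LocalizationGeom}) and compatibility of the $E^1$-page with filtered colimits over flags, one may pass to the essentially smooth local scheme $\Spec \mathcal{O}_{X,y}$ and refine the flag so that $Z_{p-1} = \overline{\{y\}}$ and $Z_p$ contains the closure $W := \overline{\{x\}}$. Under the identification of Theorem~\ref{SpectralSequenceComputationBis}, the $(x,y)$-entry of $d_{p,q}$ then unfolds as the composite of the canonical restriction to the cohomology of $X - Z_{p-1}$ with the residue $\partial_{Z_{p-1}/X}$ from paragraph~\ref{LocalizationMap}.

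If $y \notin W$, shrinking further around $y$ separates $\overline{\{x\}}$ from $\overline{\{y\}}$, and Proposition~\ref{LocalizationGeom} forces the composite to vanish, which matches $\partial^x_y = 0$. I would then focus on the case $y \in W$. Let $p \colon \tilde W \to W$ be the normalization and $z_1,\dots,z_r$ the (finitely many) points above $y$; each $\mathcal{O}_{\tilde W, z_i}$ is a DVR with residue field $\kappa(z_i)$, and each closed immersion $k_i \colon \Spec \kappa(z_i) \hookrightarrow \tilde W$ is regular. The key step is to apply Theorem~\ref{RamificationGeom}, together with Proposition~\ref{Prop.1.36} to separate the contributions of the distinct $z_i$'s, to the topologically cartesian square obtained by base-changing the immersion $i \colon \Spec \kappa(y) \hookrightarrow W$ along $p$. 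This rewrites the residue along $i$, which is \emph{a priori} taken along a non-regular immersion (since $W$ need not be normal at $y$), as a sum of the regular-immersion residues $\partial_{z_i}$ pushed forward along the finite morphism $\tilde W \to W$. Since the finite pushforward along $\Spec \kappa(z_i) \to \Spec \kappa(y)$ is by construction $\cores_{\kappa(z_i)/\kappa(y)}$, one obtains the total contribution $\sum_i \cores_{\kappa(z_i)/\kappa(y)} \circ \partial_{z_i} = \partial^x_y$.

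The main obstacle lies precisely in this last step: the immersion $\{y\} \hookrightarrow W$ is in general not regular because the closure $W$ need not be normal at $y$, so one cannot describe the residue directly via a fundamental class, and the naive base-change identity is not available. Theorem~\ref{RamificationGeom} is designed to handle exactly this defect of transversality, and the ramification factors $\nu_*^{(e_i)}$ it produces are absorbed into the quadratic corestrictions thanks to rule~\ref{itm:R4a} and a careful tracking of the Thom isomorphisms and the twists by the cotangent complex. Once the localization reduction is in place, everything else is routine bookkeeping on indices and virtual bundles.
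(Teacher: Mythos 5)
Your outline of the first step (localize, compute the $(x,y)$-entry as a composite of the pushforward $i_{x*}$ and the residue $\partial_{Z_{p-1}/X}$, handle the case $y \notin \overline{\{x\}}$ by shrinking and Proposition~\ref{LocalizationGeom}) matches the paper. But the key step in the case $y \in \overline{\{x\}}$ has a genuine gap. You identify the relevant residue as being taken along the non-regular immersion $\Spec\kappa(y) \hookrightarrow W := \overline{\{x\}}$ and propose to invoke Theorem~\ref{RamificationGeom}. This is incorrect on two counts. First, the residue governing $d_{p,q}$ is $\partial_{Z_{p-1}/X}$, taken along $Z_{p-1} \hookrightarrow X$ inside the \emph{smooth} ambient scheme $X$; after shrinking $X$ away from the singular locus, $Y := Z_{p-1}$ is itself smooth, so this \emph{is} a regular codimension-one immersion --- there is no "defect of transversality" of the type the ramification formula addresses. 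Second, even if one wanted to take a residue inside $W$, Theorem~\ref{RamificationGeom} requires \emph{both} closed immersions $i$ and $k$ in the square to be regular of codimension one; $\Spec\kappa(y) \hookrightarrow W$ fails this precisely in the non-normal case you are trying to treat, so the hypothesis of the theorem is not met.

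The paper's proof is different: after normalizing $Z = \overline{\{x\}}$ to $\tilde{Z}$ and shrinking $X$ so that $\tilde{Z}$ and the reduced inverse image $\tilde{Y}_y$ of $Y_y$ are smooth, one relates the residue on $X$ (along $Y_y \hookrightarrow X - Y'_y$) to the residue on $\tilde{Z}$ (along $\tilde{Y}_y \hookrightarrow \tilde{Z} - \tilde{Y}'_y$, now a smooth pair of codimension one) via naturality of the localization sequence under the proper Gysin pushforward along $\tilde{Z} \to X$ (an lci map of smooth schemes), and then uses Proposition~\ref{Prop.1.36} to decompose over the finitely many points $z \in \tilde{Z}$ above $y$. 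No ramification index $e$ and no factor $\nu^{(e)}_*$ enters --- consistent with the definition $\partial^x_y = \sum_{z|y} \cores_{\kappa(z)/\kappa(y)} \circ \partial_z$, which involves ordinary corestrictions. Your closing claim that "the ramification factors $\nu_*^{(e_i)}$ are absorbed into the quadratic corestrictions thanks to rule~\ref{itm:R4a}" is unsubstantiated: rule~\ref{itm:R4a} governs determinants of automorphisms of virtual bundles and provides no mechanism for cancelling a factor of $e_\epsilon$. The correct fix is to drop the ramification theorem entirely and argue via proper Gysin pushforward from the normalization, as the paper does.
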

\begin{proof} (see also \cite[Proposition 1.15]{Deg12})

By definition, $d_{p,q}$ is the colimit of arrows
\begin{center}
$\xymatrix{
\EE^{q-p}(Z-Y,\LL_{Z-Y}+\VV_{Z-Y}) \ar[r]^-{i_{p*}} & \EE^{q-p}(X-Y,\LL_{X-Y}+\VV_{X-Y}) \ar[r]^-{\partial_{p-1}} & \EE^{q-p+1}(Y-W,\LL_{Y_W}+\VV_{Y_W})
}$
\end{center}
where $W\subset Y\subset Z$ are large enough closed subschemes with $\dim_X(Z)=p$, $\dim_X(Y)=p-1$ and $\dim_X(W)\leq p-2$. In the following, we consider $W,Y,Z$ as above. For simplification, we replace $X$ by $X-W$ so that we can remove any subset of $X$ if its dimension is $\leq p-2$.
\par Enlarging $Y$, we may assume that $Y$ contains $Z_{sing}$ the singular locus of $Z$. Since the singular locus of $Y$ has dimension strictly lesser than $p-1$, we may assume that $Y$ is smooth. In short, we study the composite:
\begin{center}

$\xymatrix{
\EE^{q-p}(Z-Y,\LL_{Z-Y}+\VV_{Z-Y}) \ar[r]^-{i_{Y*}} & \EE^{q-p}(X-Y,\LL_{X-Y}+\VV_{X-Y}) \ar[r]^-{\partial_p} & \EE^{q-p+1}(Y,\LL_Y+\VV_Y)
}$
\end{center} 
where $i_Y:Z-Y\to X-Y$ is the restriction of the canonical closed immersion $Z\to X$.
\par We denote by $Y_y$ (resp. $Z_x$) the irreducible component of $Y$ (resp. $Z$) containing $y$ (resp. $x$). We may decompose $Y$ as $Y=Y_y\sqcup  Y_y'$ with $Y_y'=Y\setminus Y_y$ since $Y$ is smooth. Denote by $\hat{Y}_x=Y\times_Z Z_x$ so that $Z_x-\hat{Y}_x$ is a connected component of the (smooth) scheme $Z-Y$. Denote by $i_x:Z_x-\hat{Y}_x \to  X-Y$ the canonical inclusion. According to Proposition \ref{Prop.1.36}, we get the following commutative diagram
\begin{center}

$\xymatrixcolsep{2pc}\xymatrix{
\EE^{q-p}(Z-Y,\LL_{Z-Y}+\VV_{Z-Y}) \ar[r]^-{i_{Y*}} \ar@{->>}[d] &
\EE^{q-p}(X-Y,\LL_{X-Y}+\VV_{X-Y}) \ar[r]^-{\partial_{X,Y}} \ar@{=}[d]& \EE^{q-p+1}(Y,\LL_Y+\VV_Y) \ar@{->>}[d] \\
\EE^{q-p}(Z_x-Y_x,\LL_{Z_x-Y_x}+\VV_{Z_x-Y_x}) \ar[r]^-{i_{x*}} 
\ar@/_2pc/[rr]_-{\partial^{Z,x}_{Y,y}} &
\EE^{q-p}(X-Y,\LL_{X-Y}+\VV_{X-Y}) \ar[r]^-{\partial_{X-Y'_y,Y_y}} & \EE^{q-p+1}(Y_y,\LL_{Y_y}+\VV_{Y_y})
}$

\end{center}
where the vertical maps are the canonical projections. The theorem is equivalent to proving that the differential $\partial^x_y$ defined in \ref{2.0.1} is the colimit of the maps $\partial^{Z,x}_{Y,y}$ defined in the above diagram.
\par Assume that $y$ is not a specialization of $x$, that is $y\not \in \overline{\{x\}}$. Then $\dim_X(Y_x\cap Z_x)\leq p-2$ hence (reducing $X$ to $X-(Y_x\cap Z_x)$) we may assume that $Y_x\cap Z_x=\varnothing$. Thus $Y_x\cap (Z_x- \hat{Y}_x)=\varnothing$ and we get the following cartesian square of closed immersions
\begin{center}

$\xymatrix{
\varnothing \ar[r] \ar[d] & Y_y \ar[d] \\
Z_x-\hat{Y}_x \ar[r] & X-Y_y' 
}$
\end{center}
which spawns the equality $\partial_{X-Y_y',Y_y} \circ i_{x*}$ by naturality of the residue maps (see Proposition \ref{LocalizationGeom}). This proves the proposition in this case.
\par Assume that $y$ is a specialization of $x$ so that $Y_y\subset Z_x$ and $Y_y\subset \hat{Y}_x$. For simplification, we assume that $Z=Z_x$, that is $Z$ is irreducible with generic point $x$. Consider the normalization $f:\tilde{Z}\to Z$ of $Z$. The singular locus $\tilde{Z}_{sing}$ is of codimension greater than $1$ in $\tilde{Z}$ hence $f(\tilde{Z}_{sing})$ is of dimension strictly lesser than $p-1$ in $X$ and (reducing $X$) we may assume that $\tilde{Z}$ is smooth.
\par Denote by $\tilde{Y}$ (resp. $\tilde{Y}_y$, $\tilde{Y}_y'$) the reduced inverse image of $Y$ (resp. $Y_y$, $Y'_y$) along $f$. Reducing $X$ again, we may assume that $\tilde{Y}_y$ is smooth and $\tilde{Y}_y\cap \tilde{Y}_y'=\varnothing$. We can also assume that every connected component of $\tilde{Y}_y$ dominates $Y_y$ (by reducing $X$, we can remove the non-dominant connected components). From this, we see that the map $g_y:\tilde{Y}_y\to Y_y$ induced by $f$ is finite and equidimensional. Consider the following topologically cartesian square:
\begin{center}

$\xymatrix{
\tilde{Y}_y \ar[r]^-{\tilde{\sigma}} \ar[d]^-{g_y} &
\tilde{Z}-\tilde{Y}'_y \ar[d] \\
Y_y \ar[r]^-{\sigma} & X-Y'_y
}$

\end{center}
where $\sigma$ and $\tilde{\sigma}$ are the canonical closed immersions and the right vertical map is induced by the composite 
\begin{center}

$\xymatrix{
\tilde{Z} \ar[r]^f & Z \ar[r]^i & X.
}$
\end{center}
By taking complements of $\tilde{\sigma}$ and $\sigma$, it induces the map
\begin{center}

$\xymatrix{
\tilde{Z}-\tilde{Y} \ar[r]^h & Z-Y \ar[r]^i & X-Y.
}$
\end{center}

By naturality of residues with respect to Gysin morphisms and by functoriality of the Gysin morphisms, we get the commutative diagram
\begin{center}

$\xymatrixcolsep{2pc}\xymatrix{
\EE^{q-p}(Z-Y,\LL_{Z-Y}+\VV_{Z-Y})  \ar[r]^-{i_{x*}} & \EE^{q-p}(X-Z,\LL_{X-Y}+\VV_{X-Y}) \ar[r]^-{\partial_{X-Y_y',Y_y}} & \EE^{q-p+1}(Y_y,\LL_{Y_y}+\VV_{Y_y}) \ar[d] \\
\EE^{q-p}(\tilde{Z}-\tilde{Y},\LL_{\tilde{Z}-\tilde{Y}}+\VV_{\tilde{Z}-\tilde{Y}}) \ar[u]^{h_*} \ar[r]_-{\partial_{\tilde{Z}-\tilde{Y}_y',\tilde{Y}_y}} &
\EE^{q-p}(\tilde{Y}_y,\LL_{\tilde{Y}_y}+\VV_{\tilde{Y}_y}) \ar[r]_-{g_{y*}} &
\EE^{q-p+1}(Y_y,\LL_{Y_y}+\VV_{Y_y}).
}$
\end{center}
For any $t\in f^{-1}(y)$, there exists a unique connected component $\tilde{Y}_t$ in the (smooth) scheme $\tilde{Y}_y$ so that $\tilde{Y}_y=\sqcup_{t\in f^{-1}(y)} \tilde{Y}_t$. Note that $\tilde{Y}_t$ is also a connected component of $\tilde{Y}$. Denote by $\tilde{Z}_t=\tilde{Z}-(\tilde{Y}-\tilde{Y}_t)$ ; this is an open subscheme of $\tilde{Z}$ containing $\tilde{Y}_t$ and $\tilde{Z}_t-\tilde{Y}_t=\tilde{Z}-\tilde{Y}$. According the Proposition \ref{Prop.1.36}, we have the following commutative diagram

\begin{center}
$\xymatrixcolsep{2pc} \xymatrix{
\EE^{q-p}(\tilde{Z}-\tilde{Y},\LL_{\tilde{Z}-\tilde{Y}}+\VV_{\tilde{Z}-\tilde{Y}}) \ar@{=}[d] \ar[r]^-{\partial_{\tilde{Z}-\tilde{Y}_y',\tilde{Y}_y}} &
\EE^{q-p+1}(\tilde{Y}_y,\LL_{\tilde{Y}_y}+\VV_{\tilde{Y}_y}) \ar[r]^-{g_{y*}} \ar[d]^{\simeq} &
\EE^{q-p+1}(Y_y,\LL_{Y_y}+\VV_{Y_y}) \ar@{=}[d] \\
\EE^{q-p}(\tilde{Z}-\tilde{Y},\LL_{\tilde{Z}-\tilde{Y}}+\VV_{\tilde{Z}-\tilde{Y}}) \ar[r]_-{\sum_t \partial_{\tilde{Z}_t,\tilde{Y}_t}} & 
\bigoplus_{t\in f^{-1}(y)} \EE^{q-p+1}(\tilde{Y}_t,\LL_{\tilde{Y}_t}+\VV_{\tilde{Y}_t}) 
\ar[r]_-{\sum_t g_{t*}} & 
\EE^{q-p+1}(Y_y,\LL_{Y_y}+\VV_{Y_y})
}$
\end{center}
where the middle vertical map is the canonical isomorphism.
\par We can now identify $\partial^x_y$ with the formal colimit of $\tilde{\partial}^{Z,x}_{Y,y}$ for $Y,W$. In view of \ref{2.0.1}, this is justified because:
\begin{itemize}
\item  $h$ is birational and $\tilde{Z}-\tilde{Y}$ is smooth with function field $\kappa(x)$.
\item The closed pair $(\tilde{Z}_t,\tilde{Y}_t)$ is smooth of codimension 1 and the local ring of $\mathcal{O}_{\tilde{Z}_t,\tilde{Y}_t}$ is isomorphic (through $h$) to the valuation ring $\mathcal{O}_{v_t}$ corresponding to the valuation $v_t$ on $\kappa(x)$ considered in \ref{2.0.1}.
\end{itemize}
\end{proof}
From Theorem \ref{SpectralSequenceComputationBis} and Theorem \ref{DifferentialsComputation}, we deduce that the differentials coincide so that $C_*(X,\EEE,\VV_X)$ is a complex when the scheme $X$ is smooth. We use this to prove that the premodule $\EEE$ is a Milnor-Witt cycle module:
\begin{description}
\item [\namedlabel{itm:FD}{(FD)}]  {\sc Finite support of divisors.}
	Let $X$ be a normal scheme, $\VV_X$ be a virtual vector bundle over $X$ and $\rho$ be an element of $M(\xi_X,\VV_X)$. Then $\partial_x(\rho)=0$ for all but finitely many $x\in X^{(1)}$.

\end{description}
\begin{proof}
 We can assume without loss of generality that $X$ is affine of finite ype. Then there exist a virtual vector bundle $\VV_{\AAA^r_k}$ over $\AAA^r_k$ and a closed immersion $i:X\to \AAA^r_k$ for some $r\geq 0$ which induces an inclusion
\begin{center}

$C^*(X,\EEE,\VV_X)\subset C^*(\AAA^r_k,\EEE, \VV_{\AAA^r_k})$
\end{center}
compatible with the differentials thanks to the previous theorem. Hence \ref{itm:FD} holds.

\end{proof}
\begin{description}
\item [\namedlabel{itm:C}{(C)}] {\sc Closedness.} Let $X$ be integral and local of dimension 2 and $\VV_X$ be a virtual bundle over $X$. Then
\begin{center}
$0=\displaystyle \sum_{x\in X^{(1)}} \partial^x_{x_0} \circ \partial^{\xi}_x: M(\xi_X,\VV_X)\to M(x_0, \VV_X)$
\end{center}
where $\xi$ is the generic point and $x_0$ the closed point of $X$.

\end{description}
\begin{proof}

 According to (FD), the differentials $d$ of $C^*(X,\EEE,\VV_X)$ are well-defined. We want to prove that $d\circ d=0$. Again, we can assume $X$ to be affine of finite type over $k$.  Then there exist a virtual vector bundle $\VV_{\AAA^r_k}$ over $\AAA^r_k$ and a closed immersion $i:X\to \AAA^r_k$ for some $r\geq 0$ which induces an inclusion
\begin{center}

$C^*(X,\EEE,\VV_X)\subset C^*(\AAA^r_k,\EEE, \VV_{\AAA^r_k})$
\end{center}
compatible with the differentials. Hence \ref{itm:C} holds.

\end{proof}
Since our constructions are natural in the motivic spectrum $\EE$, we can conclude:
\begin{The} \label{FunctorSHtoMW}
Consider $S=\Spec k$ the spectrum of a perfect field. The map $\EE \mapsto \EEE$ defines a functor from the category $\SH(S)$ of motivic spectrum to the category $\MW_k$ of Milnor-Witt cycle modules over $k$.
\end{The}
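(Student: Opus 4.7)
The plan is to leverage the work already done in the preceding two subsections. We verified that for a fixed motivic spectrum $\EE$, the groups $\EEE(E,\VV_E)=\EE^{-r}(\Spec E, \VV_E)$, together with the data \ref{itm:D1}--\ref{itm:D4}, satisfy all of the axioms \ref{itm:R1a}--\ref{itm:R4a} and the closedness/finite support properties \ref{itm:FD} and \ref{itm:C} required of a Milnor-Witt cycle module. Hence only two things remain to be established: that the construction makes sense for an arbitrary motivic spectrum (rather than a ring spectrum), and that the assignment $\EE\mapsto \EEE$ is functorial in $\EE$.

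For the first point, observe that the only place where a multiplicative structure on $\EE$ was tacitly used is in the Milnor-Witt action \ref{itm:D3}. However, any $\EE\in \SH(S)$ is canonically a module over the sphere spectrum $\un$ via the unit isomorphism $\un\otimes \EE\to \EE$, and elements of $\KMW(E,\mathcal{W}_E)$ are by Example \ref{MainExampleCoh} nothing but elements of $\hat{\un}(E,\mathcal{W}_E)$. Left composition with such elements as in \ref{MWactionGeom} then still defines the required map $\gamma_x$, and the verification of the module axioms (and the compatibility rules \ref{itm:R2a}, \ref{itm:R2b}, \ref{itm:R2c}, \ref{itm:R3e}, \ref{itm:R4a}) is unchanged, since these only used naturality of the pairing and not any ring structure on $\EE$. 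All other data and rules are built out of pure six-functor formalism.

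For functoriality, given a morphism $\phi:\EE\to \EE'$ in $\SH(S)$, I would define the induced morphism $\phi_*:\EEE \to \EEE'$ as post-composition with the pulled-back map $p^*(\phi)\otimes \id$ over $X=\Spec E$. The verification that $\phi_*$ commutes with \ref{itm:D1} and \ref{itm:D2} is naturality of pullback, proper pushforward, and the purity/trace morphisms used in \ref{contravarianceGeom} and \ref{covarianceGeom}; compatibility with \ref{itm:D3} is immediate since left composition with a fixed $\KMW$-class commutes with post-composition by $\phi$; and compatibility with \ref{itm:D4} follows from naturality of the localization triangle \ref{LocalizationMap} in the coefficient spectrum. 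Finally, $(\psi\circ \phi)_*=\psi_*\circ \phi_*$ and $(\id_\EE)_*=\id_{\EEE}$ are tautologies from the definition via post-composition in the additive category $\SH(X)$.

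I do not anticipate a genuine obstacle at this stage; the difficult verifications were concentrated in the Milnor-Witt cycle module axioms, in particular the ramification rule \ref{itm:R3a} (Theorem \ref{RamificationGeom}) and the closedness property, which rested on the spectral sequence of Theorem \ref{DifferentialsComputation}. The content of Theorem \ref{FunctorSHtoMW} beyond those is formal functoriality, arising from the naturality in $\EE$ of the various building blocks of the six-functor formalism.
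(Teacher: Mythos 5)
Your proposal is correct and follows the same logic as the paper, which simply states "Since our constructions are natural in the motivic spectrum $\EE$, we can conclude" and leaves the rest implicit; you have fleshed out exactly what that naturality amounts to (post-composition with $p^*(\phi)\otimes\id$, compatibility with \ref{itm:D1}--\ref{itm:D4} via naturality of the six-functor constructions and the localization triangle). You also correctly catch and resolve a small imprecision in the paper: Subsection \ref{MWassociated} opens with "Let $\EE$ be a motivic \emph{ring} spectrum," yet the theorem asserts a functor on all of $\SH(S)$; as you observe, only the unitor $\un\otimes\EE\to\EE$ from \ref{MWactionGeom} is used for the $\KMW$-action, so no ring structure is needed.
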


\section{An equivalence of categories} \label{EquivalenceOfCat}
The purpose of this section is to prove the following theorem.

\begin{The}\label{ThmDeg}
Let $k$ be a perfect field. The functor of Theorem \ref{FunctorSHtoMW} induces an equivalence between the category of Milnor-Witt cycle modules and the heart of Morel-Voevodsky stable homotopy category (equipped with the homotopy t-structure):
\begin{center}

$ \mathfrak{M}^{MW}_k \simeq {\operatorname{SH}(k)}^\heartsuit$.
\end{center}

\end{The}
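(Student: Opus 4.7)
The plan is to factor the functor from Theorem \ref{FunctorSHtoMW} through Morel's equivalence $\SH(k)^{\heartsuit}\simeq \HI_{\ast}(k)$ between the heart and the category of homotopy modules, and to produce a quasi-inverse directly. Concretely, I would construct a functor $\Phi\colon\mathfrak{M}^{MW}_k\to\HI_{\ast}(k)$ which sends a Milnor-Witt cycle module $M$ to the homotopy module $\Phi(M)$ whose evaluation on a smooth $X\in\Sm$ with line bundle $\LL$ is
\[
\Phi(M)_{n}(X,\LL)\;=\;A^{0}(X,M,\langle n\rangle+\LL-\Om_{X/k}).
\]
The Chow-Witt complex formalism of \cite{Fel18} (in particular pullbacks, the $\kMW$-action and the homotopy invariance theorem \cite[Theorem 9.4]{Fel18}) provides the structure of a strictly $\AAA^{1}$-invariant Nisnevich sheaf, while the $\kMW$-module structure yields the contraction isomorphisms identifying $\Phi(M)_{n+1}(-\wedge\Gm)$ with $\Phi(M)_{n}$. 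Lemma \ref{LemTrivialization} together with rule \ref{itm:R4a} ensures that this is well-defined on isomorphism classes of virtual bundles.

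Next I would prove that the composition $\mathfrak{M}^{MW}_k\xrightarrow{\Phi}\HI_{\ast}(k)\simeq\SH(k)^{\heartsuit}\xrightarrow{\hat{(-)}}\mathfrak{M}^{MW}_k$ is naturally isomorphic to the identity. For a field $E/k$ and virtual space $\VV_{E}$ of rank $r$, the MW cycle module associated to the spectrum $\EE_{M}$ representing $\Phi(M)$ is computed at $(E,\VV_{E})$ by $\EE_{M}^{-r}(\Spec E,\VV_{E})$. Using Theorem \ref{Duality}, Theorem \ref{SpectralSequenceComputationBis} and Theorem \ref{DifferentialsComputation} applied to a smooth model of $\Spec E$, this group degenerates to $A^{0}(\Spec E, M, \VV_{E})=M(E,\VV_{E})$, and the five basic operations \ref{itm:D1}-\ref{itm:D4} correspond under this identification to those constructed in Section \ref{MWassociated}. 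This must be checked structure by structure, but each verification is a direct consequence of the compatibilities already established: contravariance and covariance come from \ref{contravarianceGeom}-\ref{covarianceGeom}, the $\kMW$-action from \ref{MWactionGeom} (together with the identification $\kMW=\hat{\un}$ of Example \ref{MainExampleCoh}), and the residue from \ref{LocalizationMap}.

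For the reverse composition $\SH(k)^{\heartsuit}\to\mathfrak{M}^{MW}_k\xrightarrow{\Phi}\HI_{\ast}(k)\simeq\SH(k)^{\heartsuit}$, I would argue that for a homotopy module $F$ corresponding to a spectrum $\EE$, the Gersten resolution of $F$ over a smooth scheme $X$ coincides, up to a shift of twists, with the complex $C^{\ast}(X,\EEE,-\Om_{X/k})$. Indeed the spectral sequence constructed in Section \ref{HMandMW} converges to $\EE^{\ast}(X,-)$; its $E_{1}$-page is precisely this complex by Theorem \ref{SpectralSequenceComputationBis}, and its differentials are the cycle-module differentials by Theorem \ref{DifferentialsComputation}. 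For the heart, strict $\AAA^{1}$-invariance plus the Morel-Gersten principle collapse the spectral sequence on the $0$-line, giving $\Phi(\EEE)(X)\simeq F(X)$ compatibly with all the homotopy module structure.

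The main obstacle will be the second step: proving that $\Phi(M)$ is honestly a homotopy module and not merely a presheaf on $\Sm$. This requires that the presheaf $X\mapsto A^{0}(X,M,*)$ already defines a Nisnevich sheaf with the correct transfers and contractions, i.e., that the twisted Chow-Witt group in degree $0$ satisfies Nisnevich descent and the $\Gm$-loop identification. Descent and strict $\AAA^{1}$-invariance follow from the general machinery of \cite{Fel18}, but identifying the contractions with the $\kMW$-action and verifying that they compose into a genuine homotopy module structure is the delicate point; here the key input is \cite{Mor03} which identifies homotopy modules with $\kMW$-modules in $\HI(k)$, combined with rule \ref{itm:R4a} to handle the line-bundle twists coherently.
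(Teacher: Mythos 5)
Your proposal follows essentially the same overall strategy as the paper: construct a functor $M\mapsto F^M$ from Milnor--Witt cycle modules to homotopy modules, use Morel's equivalence $\SH(k)^{\heartsuit}\simeq\HM(k)$, and verify both round-trip compositions are naturally isomorphic to the identity. The first step (showing $F^M_n(X)=A^0(X,M,-\Om_{X/k}+\ev{n})$ is a homotopy module), and the first composition (evaluating $\MMM$ on function fields and checking data \ref{itm:D1}--\ref{itm:D4} structure by structure), match the paper's argument closely, including the role of Lemma \ref{LemTrivialization} and rule \ref{itm:R4a} for handling twists. The one genuine divergence is your treatment of the second composition: you propose to identify the Gersten/Rost--Schmid resolution of a homotopy module with the complex $C^*(X,\EEE,*)$ via the spectral sequence of Section \ref{HMandMW}, and invoke a degeneration argument. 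The paper instead constructs a direct comparison morphism $\beta\colon\MM^{-n}(X,\ev{n})\to A^0(X,\MMM,-\Om_{X/k}+\ev{n})$ by restriction to the generic point, checks it commutes with smooth pullbacks, Gysin morphisms (via deformation to the normal cone) and desuspension maps, and concludes it is an isomorphism because both sides agree on spectra of fields. Your route requires knowing the spectral sequence collapses to the zero line for spectra in the heart (a Gersten-type statement) and that the resulting isomorphism is compatible with the homotopy-module structure maps, which is not automatic and would still need the same explicit comparisons with pullbacks and contractions that the paper carries out for $\beta$; the paper's direct construction of $\beta$ avoids having to establish the degeneration as a separate input. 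Both routes are viable, but the paper's is more economical in that it sidesteps convergence issues and gets the compatibility of structures for free from the definition of $\beta$.
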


\subsection{Associated homotopy module}
We recall some facts about the heart of the stable homotopy category (see \cite[§5.2]{Mor03} or \cite[§1]{Deg10}).

\begin{Def} \label{DefDesuspension}
	Let $M$ be an abelian Nisnevich sheaf on $\Sm$. We denote by $M_{-1}(X)$ the kernel of the morphism $M(X\times \Gm)\to M(X)$ induced by the unit section of $\Gm$. We say that $M$ is strictly homotopy invariant if the Nisnevich cohomology sheaf $H^*_{ \operatorname{Nis}}(-,M)$ is homotopy invariant.
\end{Def}
\begin{Def} \label{DefHomotopyModules}
	A {\em homotopy module} is a pair $(M_*,\omega_*)$ where $M_*$ is a $\ZZ$-graded abelian Nisnevich sheaf on $\Sm$ which is strictly homotopy invariant and $\omega_n:M_{n-1}\to (M_{n})_{-1}$ is an isomorphism (called {\em desuspension map}). A morphism of homotopy modules is a homogeneous natural transformation of $\ZZ$-graded sheaves compatible with the given isomorphisms. We denote by $\HM(k)$ the category of homotopy modules over $k$.
\end{Def}
\begin{Par}\label{MorelEquiv}
	For any spectrum $\EE$, the spectrum $\pizero(\EE)$ has a canonical structure of a homotopy module. Moreover, the functor $\pizero:\EE \mapsto \pizero(\EE)$	induces an equivalence of categories between the heart of $\SH(k)$ for the homotopy t-structure and the category $\HM(k)$ (see \cite{Mor03}). We denote its inverse by 
	 \begin{center}
	 	$H:\HM(k)\to \SH(k)^{\heartsuit}$.
	 \end{center}
\end{Par}

 \par We continue with two lemmas of independent interest.

\begin{Lem}

Let $g:Y\to X$ be a smooth morphism of schemes of finite type over $k$ of constant fiber dimension 1, let $\sigma$ be a section of $g$, let $\VV_X$ be a virtual vector bundle over $X$ and let $t\in \mathcal{O}_Y$ be a global parameter defining the subscheme $\sigma(X)$.
\par Then $\sigma_*:C_*(X,M,\VV_X)\to C_*(Y,M,\VV_Y)$ is zero on homology. 
\end{Lem}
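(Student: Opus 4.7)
The plan is to prove the vanishing of $\sigma_*$ on homology by constructing an explicit chain-level null-homotopy. Concretely, I aim to build a degree-one map
\[
 h \colon C_p(X,M,\VV_X) \to C_{p+1}(Y,M,\VV_Y)
\]
satisfying $d(h(z)) = \sigma_*(z)$ for every cycle $z$. The idea is to combine the smooth pullback $g^*$ with multiplication by the global function $t$: for $x \in X_{(p)}$ and $a_x$ in the $x$-component of $C_p(X,M,\VV_X)$, I set
\[
 h(a_x) \;:=\; \gamma_{[-t]}\bigl(g^*(a_x)\bigr)
\]
placed at the generic point $\eta_x$ of $g^{-1}(\overline{\{x\}})$, a point of $Y_{(p+1)}$ (with the obvious summation if this preimage has several irreducible components). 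The section $dt \in \Omega_{Y/X}$ provides, along each fiber, a trivialization $\Omega_{Y/X} \simeq \langle 1 \rangle$ that identifies the twist $-\LL_g$ produced by $g^*$ with the inverse of the twist $+\langle 1 \rangle$ produced by $\gamma_{[-t]}$, so that $h(a_x)$ lands in the correct component of $C_{p+1}(Y,M,\VV_Y)$.

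To verify that $d(h(z)) = \sigma_*(z)$ for a cycle $z$, I will decompose $d(h(z))$ into its $y'$-components for $y' \in Y_{(p)}$ and treat three cases. When $y' = \sigma(x)$, the residue on $\kappa(\eta_x)$ is the $t$-adic valuation, which is trivial on $\kappa(x)$ and has $t$ as uniformizer; rule \ref{itm:R3d} then gives $\partial_t \gamma_{[-t]} g^*(a_x) = a_x$, matching $\sigma_*(z)$ at $\sigma(x)$ on the nose. When $y'$ is a codimension-one point of a fiber $g^{-1}(x)$ distinct from $\sigma(x)$, the associated valuation is trivial on $\kappa(x)$ and $t$ is a unit at it, so rules \ref{itm:R3c} and \ref{itm:R3e} combine to force the contribution to vanish. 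When $g(y') \in X_{(p-1)}$, the valuation at $y'$ restricts non-trivially to a valuation on $\kappa(x)$, $t$ remains a unit at $y'$, and rules \ref{itm:R3a} and \ref{itm:R3e}, combined with the normalization procedure of \ref{2.0.1}, let me rewrite the total contribution (summed over all generic points $\eta_x$ of fibers containing $y'$) as a term of the form $h(d(z))_{y'}$, which vanishes because $z$ is a cycle.

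The main obstacle lies in this last case. One must match, after passing to the normalization of each closure $\overline{\{\eta_x\}}$, the residue contributions coming from different generic points sitting over the same base point $g(y')$ with the terms defining the differential on $C_*(X,M,\VV_X)$. This requires careful bookkeeping of the ramification correction $e_\epsilon$ from rule \ref{itm:R3a}, the unit corrections produced by rule \ref{itm:R3e} for $\gamma_{[-t]}$, the corestriction maps appearing when the residue field extensions are not trivial, and the compatibility of the $dt$-trivialization of $\Omega_{Y/X}$ with the twists appearing in the smooth pullback $g^*$. Modulo this bookkeeping, the lemma reduces to a direct application of the Milnor-Witt cycle premodule rules \ref{itm:R3a}--\ref{itm:R3e}.
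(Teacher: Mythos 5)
Your strategy coincides with the paper's in its essential move: exhibit $\sigma_*(z)$ as a boundary, with the explicit preimage being (up to sign and the twist identification) $[t]\cdot g^*(z)$ viewed as a chain on $Y$ supported away from $\sigma(X)$. The difference lies entirely in the verification. The paper's proof is a two-line citation: it invokes \cite[Lemma 5.5]{Fel18}, which already records the chain-level identity $\sigma_* = \sigma_*\circ\partial\circ[t]\circ\tilde{g}^*$ (where $\tilde{g}$ is the restriction of $g$ to $U = Y\setminus\sigma(X)$), and then uses the block structure of the differential on $C_*(Y,M,*)$ relative to the decomposition $Y = \sigma(X)\sqcup U$ to rewrite $\sigma_*\circ\partial$ as $d\circ j_*$ on cycles (the discrepancy term $j_*\circ d_U$ kills $[t]\tilde{g}^*(z)$ when $z$ is a cycle because $t$ is a unit on $U$, so multiplication by $[t]$ commutes with $d_U$ up to $\epsilon$). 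You instead propose to re-derive the corresponding chain-homotopy identity $d\circ h\pm h\circ d = \sigma_*$ directly from the axioms by a three-case residue computation. Your case split is exhaustive and the choice of rules is right: \ref{itm:R3d} handles the residue at $\sigma(x)$, \ref{itm:R3c} together with \ref{itm:R3e} kills the other codimension-one points of the fiber, and \ref{itm:R3a}, \ref{itm:R3e} and \ref{itm:R1c} control the case where the residue point dominates a lower-dimensional point of $X$. But that third case --- normalization of $g^{-1}(\overline{\{x\}})$, identification of the valuations $v_t$ of \ref{2.0.1}, ramification indices, and the corestriction terms appearing when residue field extensions are nontrivial --- is exactly the content of the lemma the paper cites, and you leave it as ``bookkeeping.'' So your route is correct in spirit but amounts to re-proving \cite[Lemma 5.5]{Fel18} rather than using it, which the paper's approach avoids. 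One small caveat in your set-up: the hypothesis only guarantees $V(t) = \sigma(X)$ scheme-theoretically, so $dt$ is guaranteed to generate $\Omega_{Y/X}$ along $\sigma(X)$ but not necessarily globally; at the generic points $\eta_x$ where $h$ actually takes values this is harmless, but it should be said rather than asserted as a global trivialization.
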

\begin{proof}
Consider the open subscheme $j:U=Y\setminus \sigma(X)\to Y$ and let $\tilde{g}=g\circ j$ the restriction of $g$. Let $\partial$ be the boundary map associated to $\sigma$. According to \cite[Lemma 5.5]{Fel18}, we have $\sigma_*=\sigma_*\circ \partial \circ [t] \circ \tilde{g}^*=d\circ j_*\circ [t] \circ \tilde{g}^*$.

\end{proof}
With the same proof, we have a slightly more general result:
\begin{Lem} \label{LemmIntersNulle}
Let $g:Y\to X$, $\sigma:X\to Y$ and $\VV_X$ as previously. Let $i:Z\to X$ be a closed immersion and consider $\bar{Z}=g^{-1}(Z)$ the pullback
 along $g$. The induced map $\bar{\sigma}:Z\to \bar{Z}$ is such that the pushforward
 $\bar{\sigma}_*:C_*(Z,M,\VV_Z)\to C_*(\bar{Z},M,\VV_{\bar{Z}})$ is zero on homology.
\end{Lem}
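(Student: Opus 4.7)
The plan is to reduce Lemma \ref{LemmIntersNulle} to the situation of the preceding lemma by base change, and then repeat verbatim the computation of the previous proof in the pulled-back situation.

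First I would form the cartesian square
\[
\xymatrix{
\bar Z \ar[r]^{\bar\imath} \ar[d]_{\bar g} & Y \ar[d]^g \\
Z \ar[r]_i & X
}
\]
and observe that, because $g$ is smooth of constant fiber dimension $1$, the base change $\bar g:\bar Z\to Z$ is also smooth of constant fiber dimension $1$. The section $\sigma$ pulls back to a section $\bar\sigma:Z\to \bar Z$ of $\bar g$, and the global parameter $t\in\OO_Y$ pulls back along $\bar\imath$ to a global section $\bar t=\bar\imath^*t\in \OO_{\bar Z}$ which, the square being cartesian, cuts out precisely the closed subscheme $\bar\sigma(Z)\subset \bar Z$.

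Next I would apply \cite[Lemma 5.5]{Fel18} exactly as in the preceding proof, but with $(g,\sigma,t)$ replaced by $(\bar g,\bar\sigma,\bar t)$. Letting $\bar\jmath:\bar U=\bar Z\setminus\bar\sigma(Z)\to \bar Z$ be the open complement and $\tilde{\bar g}=\bar g\circ \bar\jmath$, and writing $\bar\partial$ for the boundary associated to $\bar\sigma$, this gives the identity
\[
\bar\sigma_* \;=\; \bar\sigma_*\circ \bar\partial\circ [\bar t]\circ \tilde{\bar g}^{\,*} \;=\; d\circ \bar\jmath_*\circ [\bar t]\circ \tilde{\bar g}^{\,*}
\]
at the level of cycle complexes, where $d$ denotes the differential of $C_*(\bar Z,M,\VV_{\bar Z})$. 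Hence $\bar\sigma_*$ takes values in the image of the differential, and therefore vanishes on homology.

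The only real content is checking that pulling back the data $(g,\sigma,t)$ along $i$ preserves the hypotheses of the preceding lemma, which is immediate from the cartesian square. No obstacle should arise beyond being careful with the twists $\VV_Z$ and $\VV_{\bar Z}=\bar g^*\VV_Z$, which match up automatically thanks to the compatibility of pullbacks and pushforwards with respect to virtual bundles established in \cite[Section 5]{Fel18}.
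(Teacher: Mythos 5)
Your proposal is correct and is precisely the argument the paper intends: the paper proves the preceding lemma via the identity $\sigma_* = d\circ j_*\circ[t]\circ\tilde g^*$ from \cite[Lemma~5.5]{Fel18} and then asserts Lemma~\ref{LemmIntersNulle} ``with the same proof,'' i.e.\ by base-changing the triple $(g,\sigma,t)$ along $i$ and rerunning that computation. You carry this out explicitly, and the only verification needed — that $\bar g$ is smooth of relative dimension~$1$, that $\bar\sigma$ is a section, and that $\bar t=\bar\imath^*t$ cuts out $\bar\sigma(Z)$ scheme-theoretically (which holds since $\bar\sigma(Z)=\sigma(X)\times_Y\bar Z$) — is exactly what you check.
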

\begin{Rem}
This result may be compared to \cite[Corollary 3.5]{FaselSrin08}.
\end{Rem}

Fix $M$ a Milnor-Witt cycle module over $k$. We associate to $M$ a homotopy module $F^M$, that is a homotopy invariant Nisnevich sheaf of $\ZZ$-graded abelian groups equipped with desuspension isomorphisms. Indeed, let $X$ be a smooth scheme over $S$. For any integer $n$, we put
\begin{center}
$F^M_n(X)=A^0(X,M,-\Om_{X/k}+\ev{n})$.

\end{center}
This defines a presheaf $F^M=(F^M_n)_{n\in \ZZ}$ of graded abelian groups satisfying the homotopy invariance property (see \cite[Theorem 8.3]{Fel18}).
\par Denote by $s_1:X=\{1\}\times X\to \Gm X$ the induced closed immersion. We have $(F^M_n)_{-1}(X)=\ker s^*_1$. By homotopy invariance, we have also $F^M_{n}(\AAA^1_X)\simeq F^M_n(X)$ hence $(F^M_n)_{-1}(X)=\coker j^*$ where $j$ is the open immersion $\Gm X\to \AAA^1_X$.
 \par As usual, we get the following long exact sequence:
 \begin{center}
 
 $\xymatrix{
 0 \ar[r] & F^M_{n}(X) \ar[r]^{j^*} &  F^M_{n}(\Gm X) \ar[r]^{\partial} & F^M_{n-1}(X) \ar[r]^-{i_*} & A^1(\AAA^1_X,M,-\Om_{\AAA^1_X/k}+\ev{n}).
 }$
 \end{center}
 Thus we see that $\partial$ induces a map ${(F^M_n)_{-1}(X)\to F^M_{n-1}(X)}$ which is an isomorphism because $i_*$ is zero (according to Lemma \ref{LemmIntersNulle}).
 \par We prove that $F^M$ is a Nisnevich sheaf. We start with the complex $C_*(-,M,*)$ and consider a Nisnevich square
 \begin{center}
 
 $\xymatrix{
 U_V \ar[r]^j \ar[d] & V \ar[d]^p \\
 U \ar[r]^i & X
 }$
 \end{center}
 where $i$ is open and $p$ étale. Denote by $Z=(X-U)_{\operatorname{red}}$ so that we have the decomposition 
 \begin{center}
 
 $C_*(X,M,*)=C_*(U,M,*)\oplus C_*(Z,M,*)$ 
 \end{center}
 and 
 \begin{center}
 
 $C_*(V,M,*)=C_*(U_V,M,*)\oplus C_*(Z_V,M,*)$. 
 \end{center}
 By assumption the induced map $p:Z_V\to Z$ is an isomorphism, hence the canonical map ${p_*:C_*(Z_V,M,*)\to C_*(Z,M,*)}$ is an isomorphism. Hence we can see that the image of the Nisnevich square by $C_*(-,M,*)$ is cocartesian. This proves that $C_*(-,M,*)$ is a Nisnevich sheaf and so is $F^M_*$.
 \par We have proved the following theorem.
 \begin{The} \label{HomotopyModuleFM}
 
 Let $M$ be a Milnor-Witt cycle module over $k$. The graded presheaf $F^M$ of abelian groups, defined by
 
 \begin{center}
 $F^M_n(X)=A^0(X,M,-\Om_{X/k}+\ev{n})$
 
 \end{center}
 for any smooth scheme $X/S$ and any integer $n$, is a homotopy module. 
 \end{The}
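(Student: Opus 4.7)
The plan is to verify the three conditions in the definition of a homotopy module in turn: that $F^M$ is a Nisnevich sheaf of $\ZZ$-graded abelian groups, that it is strictly $\AAA^1$-invariant, and that the canonical boundary maps $\omega_n\colon F^M_{n-1}\to (F^M_n)_{-1}$ are isomorphisms.

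First I would establish the Nisnevich sheaf property by working at the level of the cycle complex $C_*(-,M,*)$. For any Nisnevich distinguished square with closed complement $Z=(X\setminus U)_{\operatorname{red}}$ and $Z_V=p^{-1}(Z)$, the filtration by codimension of support gives direct sum decompositions $C_*(X,M,*)=C_*(U,M,*)\oplus C_*(Z,M,*)$ and similarly for $V$; since $p:Z_V\to Z$ is an isomorphism, the induced square of complexes is cocartesian, hence so is the square obtained by passing to the degree-zero cohomology $A^0$. Strict $\AAA^1$-invariance is then inherited from \cite[Theorem 8.3]{Fel18}, applied uniformly in every twist, because the relevant Nisnevich cohomology of $F^M_n$ is computed through the Gersten-type complex $C_*(-,M,-\Om_{X/k}+\ev{n})$, which is itself homotopy invariant.

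For the desuspension, the unit section $s_1$ of $\Gm$ factors through $\AAA^1$, so homotopy invariance $F^M_n(\AAA^1_X)\cong F^M_n(X)$ identifies $(F^M_n)_{-1}(X)$ with $\coker\bigl(j^*:F^M_n(\AAA^1_X)\to F^M_n(\Gm X)\bigr)$. The localization long exact sequence for the zero section $i:X\hookrightarrow \AAA^1_X$ and its open complement $j:\Gm X\to \AAA^1_X$ yields a residue $\partial:F^M_n(\Gm X)\to F^M_{n-1}(X)$ whose image is controlled by the pushforward $i_*:F^M_{n-1}(X)\to A^1(\AAA^1_X,M,-\Om_{\AAA^1_X/k}+\ev{n})$. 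Applying Lemma \ref{LemmIntersNulle} to the projection $\AAA^1_X\to X$ and its zero section shows that $i_*$ vanishes on homology, so $\partial$ descends to the desired isomorphism $\omega_n:F^M_{n-1}(X)\xrightarrow{\sim}(F^M_n)_{-1}(X)$.

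The main subtlety lies in the bookkeeping of virtual vector bundles along the boundary. The identity $-\Om_{\AAA^1_X/k}=-\Om_{X/k}-\ev{1}$ must combine with the normal bundle contribution $\NN_{X/\AAA^1_X}\cong \ev{1}$ from the zero section in such a way that the twist $\ev{n}$ over $\AAA^1_X$ produces precisely $\ev{n-1}$ over $X$. The canonical trivializations afforded by Lemma \ref{LemTrivialization}, together with rule \ref{itm:R4a}, make this identification well defined and independent of the chosen parameter; once in place, naturality in $X$ of pullbacks, residues, and Thom trivializations implies simultaneously that $F^M$ is a presheaf of graded abelian groups on $\Sm$ and that $\omega_n$ is a morphism of Nisnevich sheaves, completing the verification.
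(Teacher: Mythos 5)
Your proof is correct and follows essentially the same route as the paper: the Nisnevich-sheaf property via the direct sum decomposition of the cycle complex over a distinguished square, (strict) homotopy invariance from \cite[Theorem 8.3]{Fel18}, and the desuspension isomorphism from the localization long exact sequence for the zero section of $\AAA^1_X$ combined with Lemma~\ref{LemmIntersNulle} to kill the pushforward term. The extra paragraph on tracking the virtual-bundle twists through the boundary is a reasonable elaboration of what the paper leaves implicit, but it does not change the structure of the argument.
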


\subsection{First isomorphism} \label{FirstIsomArticle2}
In order to prove Theorem \ref{ThmDeg}, we construct two natural transformations and prove that they are isomorphisms. We start with the first isomorphism:
\par Let $M$ be a Milnor-Witt cycle module. Since the category of homotopy modules is equivalent to the heart of the stable homotopy category $\SH(S)$ (see \ref{MorelEquiv}), Theorem \ref{HomotopyModuleFM} implies that there is an object $\MM$ of $\SH(S)^{\heartsuit}$ equipped with isomorphisms 
\begin{center} $\label{EqFirstIso}
\alpha_X:\MM^{-n}(X,\langle n \rangle)\to F_n^M(X)$
\end{center}
for any irreducible smooth scheme $X$ of dimension $d$ and any integer $n$ (we recall that ${F_n^M(X)=A^0(X,M,-\Om_{X/k}+\langle n \rangle)}$). The maps $\alpha$ are compatible with the right-way maps (contravariance) and the desuspension functor $(-)_{-1}$ in the sense that the following diagrams commute
\begin{center}

$\xymatrix{
\MM^{-n}(X,\ev{n}) \ar[r]^{f^*} \ar[d]^{\alpha_X} & \MM^{-n}(Y,\ev{n}) \ar[d]^{\alpha_Y} \\
A^0(X,M,-\Om_{X/k}+\ev{n}) \ar[r]^{f^*} & A^0(Y,M,-\Om_{Y/k}+\ev{n})
}$
\end{center}
for any morphism $f:Y\to X$ of smooth schemes and
\begin{center}

$\xymatrix{
\MM^{-n}(X,\ev{n-1}) \ar[r]^{\omega_n} \ar[d]^{\alpha_X} & (\MM^{-n}(X,\ev{n}))_{-1} \ar[d]^{(\alpha_Y)_{-1}} \\
A^0(X,M,-\Om_{X/k}+\ev{n-1}) \ar[r]^{\omega'_n} & (A^0(X,M,-\Om_{X/k}+\ev{n}))_{-1}
}$
\end{center}
where $\omega_n$ and $\omega'_n$ are the structural desuspension maps associated the two homotopy modules for any integer $n$.
\par Fix $E/k$ a field and $n$ an integer. Using the previous isomorphism $\alpha_X$ with $X=\Spec A$ a smooth model of $E$ and taking the limit over all such $X$, we obtain an isomorphism of abelian groups
\begin{center}

$\alpha_{E}: \MMM(E,\ev{n})\to M(E,\ev{n})$.
\end{center} 
According to \ref{LemTrivialization}, this also defines in a canonical way an isomorphism
\begin{center}

$\alpha_{E}: \MMM(E,\VV_E)\to M(E,\VV_E)$
\end{center} 
for any virtual vector bundles $\VV_E$ over $E$.
\par We want to prove that this defines a morphism of Milnor-Witt cycle modules. It suffices to prove that $\alpha_E$ is natural in the data \ref{itm:D1}, \ref{itm:D2}, \ref{itm:D3} and \ref{itm:D4} (see \cite[Definition 3.5]{Fel18}).
\paragraph{(D1)}
For any morphism $f:Y\to X$ of smooth schemes, the maps $\alpha$ are compatible with right-way (pullbacks) morphisms thus the following diagram is commutative
\begin{center}
$\xymatrix{
\MMM(E,\VV_E) \ar[r]^{\res_{F/E}} \ar[d]^{\alpha_E} & \MMM(F,\VV_F) \ar[d]^{\alpha_F} \\
M(E,\VV_E) \ar[r]^{\res_{F/E}}& M(F,\VV_F)
}$

\end{center}
where $F/E$ is a field extension and $\VV_E$ is a virtual vector bundle over $E$.

\paragraph{(D4)}
 Let $Z$ be a smooth scheme over $S$. Since the maps $\alpha$ commute with the functor $(-)_{-1}$, we have the following commutative diagram (see \cite[Proposition 3.9]{Fel18})
 \begin{center}
 
 $\xymatrix{
 \MM^{-n}(\AAA^1_Z,\ev{n}) \ar[r]^{j^*} \ar[d]^\alpha & \MM^{-n}(\Gm Z,\ev{n}) \ar[d]^{\alpha} \ar[r]^{\partial}& \MM^{-n}(Z,\ev{n}) \ar[d]^\alpha \\
 A^0(\AAA^1_Z,-\Om_{\AAA^1_Z/S}+\ev{n}) \ar[r]^{j^*}  & A^0(\Gm Z,-\Om_{\Gm Z/S}+\ev{n}) \ar[r]^-{\partial} & A^0(Z,-\Om_{Z/S}+\ev{n}) 
 }
 $ 
 \end{center}
 where $j:\Gm Z \to \AAA^1_Z$ is the open immersion complementary to the zero section $i:Z\to \AAA^1_Z$.
 \par By deformation to the normal cone, we have the same commutative diagram when $j:\Gm Z \to \AAA^1_Z$ is replaced by an open immersion $j:X-Z\to X$ associated with a regular immersion $i:Z\to X$ of codimension 1. In particular, when $X=\Spec \mathcal{O}_v$ is the spectrum of a valuation ring and $Z=\Spec \kappa(v)$, we find that the maps $\alpha$ are compatible with the residue maps:
 \begin{center}
 
 $\xymatrix{
 M(E,\VV_E) \ar[r]^-{\partial_v} \ar[d]_-{\alpha_E} & M(\kappa(v),-\NN_v+\VV_{\kappa(v)}) \ar[d]^-{\alpha_{\kappa(v)}} \\
 \MMM(E,\VV_E) \ar[r]_-{\partial_v} & \MMM(\kappa(v),-\NN_v+\VV_{\kappa(v)}) 
 }$
 \end{center}
 is a commutative square.
 
 \paragraph{(D2)} Let $E$ be a field. The homotopy invariance property \ref{itm:H} states that the following sequence is split exact:
   \begin{description}
     \item [\namedlabel{itm:H}{(H)}]
     $
     \xymatrix{
     0 \ar[r] &  \MMM(E,\AAA^1_E+\Om_{E/k}+\VV_E) \ar[r]^{\res_{E(t)/E}}   & \MMM(E(t),\Om_{E(u)/k}+ \VV_{E(u)})  \\
       & \, \, \,\, \, \,\, \, \,\, \, \,\, \, \, \, \, \, \ar[r]^-d
      & \bigoplus_{x\in {(\AAA_E^1)}^{(1)}} \MMM(\kappa(x),\Om_{\kappa(x)/k}+ \VV_{\kappa(x)}) \ar[r] & 0
     }
     $
     \end{description}

  where $d=\sum_{x\in {(\AAA_E^1)}^{(1)}}\partial_x$ and where $\VV_E$ is a virtual vector bundle over $E$ (this is true for any Milnor-Witt cycle module hence in particular for $\MMM$). 
  \par We can use this property \ref{itm:H} and the data \ref{itm:D1} to characterize the data \ref{itm:D2}. Indeed, let $F/E$ be a finite field extension. Assume $F/E$ is monogenous, thus $F=E(x)$ where $x$ corresponds to a point in ${(\AAA_F^1)}^{(1)}$. For any $\beta\in M(F,\Om_{F/k}+\VV_{F})$ there exists ${\gamma\in M(E(t),\Omega_{E(t)/k}+\VV_{E(t)})}$ with the property that $d(\gamma)=\beta$. Now the valuation at $\infty$ yields a morphism
     \begin{center}
     ${\partial_\infty:M(E(t),\Omega_{E(t)/k}+\VV_{E(t)})\to
      M(E,\Omega_{E/k}+\VV_E)}$ 
     \end{center}which vanishes on the image of $\res_{E(t)/E}$. The element $-\partial(\gamma)$ does not depend on the choice of $\gamma$ and is in fact equal to $\cores_{F/E}(\beta)$. Using this characterization, we see that \ref{itm:D2} commutes with the maps $\alpha$ since they commute with \ref{itm:D4}.

\paragraph{(D3)} In order to prove that the maps $\alpha$ commute with the $\KMW$-action on the left, it suffices to do it for any generator $[u]$ (where $u$ is a unit) and the Hopf map $\eeta$. 
\par Let $E$ be a field over $k$ and $u$ be a unit of $E$. Denote by $X$ the essentially smooth scheme $\Spec E$. The unit defines a map $u:X\to \Gm X$ which induces a map
\begin{center}

$u^*:\MM^{-n}(\Gm X,\ev{n}) \to \MM^{-n}(X,\ev{n})$
\end{center} 
for any integer $n$. Moreover, we consider the canonical maps
\begin{center}

$\omega _n:\MM^{-(n-1)}(X,\ev{-1+n})\to (\MM^{-n}(X,\ev{n}))_{-1}$ 
\end{center}
and
\begin{center}

$\nu_n:(\MM^{-n}(X,\ev{n}))_{-1}\subset  \MM^{-n}(\Gm X,\ev{n})$.
\end{center}
Now consider the canonical morphism
\begin{center}

$\eeta^*:\MM^{-n}(X,\ev{n})\to \MM^{-n}(\Gm X,\ev{n})$
\end{center}
induced by the Hopf map and the canonical projection
\begin{center}

$\pi_n: \MM^{-n}(\Gm X,\ev{n})\to (\MM^{-n}(X,\ev{n}))_{-1}$.
\end{center}
One can check that the data D3 satisfies
\begin{center}

$\gamma_{[u]}=u^*\nu_n\omega_n:\MM^{-(n-1)}(X,\ev{-1+n})\to \MM^n(X,\ev{n})$.
\end{center}
and
\begin{center}

$\gamma_{\eeta}=\omega_n\pi_n\eeta^*: \MM^n(X,\ev{n})\to \MM^{-(n-1)}(X,\ev{-1+n})$
\end{center}
We have the same description for the Milnor-Witt cycle module $M$. Since the maps $\alpha$ commute with pullbacks and transition maps $\omega_n$, we see that they also commute with the $\KMW$-action.

\subsection{Second isomorphism}
Let $\MM\in \SH(k)^\heartsuit$. Let $X$ be a smooth scheme over $k$ and let $x$ be a generic point of $X$. For any integer $n$, we have a canonical map $\MM^{-n}(X,\ev{n})\to \MM^{-n}(\kappa(x),\ev{n})=\MMM(\kappa(x),\ev{n})$. Thus we have a map $\beta_X:\MM^{-n}(X,\ev{n})\to C^0(X,\MMM,-\Om_{X/k}+\ev{n})$ which factors through $A^0(X,\MMM,-\Om_{X/k}+\ev{n})$. We want to prove that the arrow
\begin{center}

$\beta:\MM\to A^0(-,\MMM,*)$
\end{center}
is an isomorphism of homotopy modules.
\par We prove that $\beta_X$ are natural in $X$ (with respect to Gysin morphisms). If $p:Y\to X$ is a smooth map of smooth schemes, it is clear by the definition of pullbacks for Chow-Witt groups with coefficients in $M$ (see \cite[§4.5]{Fel18}) that $p^*$ commutes with $b$.
\par Now consider a regular closed immersion $i:Z\to X$ of smooth schemes. Recall that (by \cite[Definition 9.1]{Fel18}) the Gysin morphism $i^*$ (for Chow-Witt groups with coefficients in $M$) makes the following diagram commutative
\begin{center}

$\xymatrix{
A^0(X,M,\ev{n}) \ar[r]^-{q^*} \ar@{=}[d]& A^0(\Gm X,M,-\LL_q+\ev{n}) \ar[r]^-{[t]}& A^0(\Gm X,M,\ev{n}) \ar[d]^{\partial} \\
A^0(X,M,\ev{n})\ar[r]^-{i^*} & A^0(Z,M,-\LL_i+\ev{n}) \ar[r]^{\pi^*}_{\simeq} & A^0(N_ZX,M,\ev{n})
}$
\end{center} 
where $q:\Gm X\to X$ is the canonical projection and $t$ is a parameter such that $\AAA^1_k=\Spec k[t]$ and where $\LL_i=-\NN_ZX$.
\par Similarly, the Gysin morphism $i^*$ (for the cohomology theory $\MM$) makes the following diagram commutative
\begin{center}

$\xymatrix{
\MM^{-n}(X,\ev{n}) \ar[r]^-{q^*} \ar@{=}[d]& \MM^{-n}(\Gm X,\ev{n}) \ar[r]^-{[t]}& \MM^{-n-1}(\Gm X,\ev{n+1}) \ar[d]^{\partial} \\
\MM^{-n}(X,\ev{n}) \ar[r]^-{i^*} & \MM^{-n}(Z,\ev{n}) \ar[r]^{\pi^*}_{\simeq} & \MM^{-n}(N_ZX,\ev{n})
}$
\end{center} 
where $q,\pi$ and $t$ are defined as previously (the proof is the same as \cite[Proposition 2.6.5]{Deg05}). Putting things together, we see that the maps $\beta$ commute with $i^*$ hence with any pullbacks (of lci morphisms).
\par Moreover, we prove that $\beta$ is compatible with the desuspension maps $\omega_n:\MM_{n-1}\simeq (\MM_n)_{-1}$ defining the homotopy modules $\MM$ and $A^0(-,\MMM,*)$. Let $X$ be an irreducible smooth scheme, we have the following diagram:
\begin{center}

$\xymatrix{
0 \ar[r]  & \MM^{-n}(\AAA^1_X,\ev{n}) \ar[r]^{j^*} \ar[d]^{\beta} \ar@{}[rd]|-{(1)} & \MM^{-n}(\Gm X,\ev{n}) \ar[r]^-{\partial_\MM} \ar[d]^{\beta} \ar@{}[rd]|-{(2)} & \MM^{-n+1}(X,\ev{n-1}) \ar[r] \ar[d]^{\beta} & \dots 
\\
0 \ar[r] & A^0(\AAA^1_X,\MMM,*) \ar[r]^{j^*} & A^0(\Gm X,\MMM,*) \ar[r]^{\partial} & A^0(X,\MMM,*-1) \ar[r] & \dots . 
}$
\end{center}
We have already seen that the square (1) commutes. The map $\partial$ is defined in \ref{FiveBasicMapsArticle2} using the data \ref{itm:D4} of the cycle module $\MMM$ which corresponds to the map $\partial_\MM$. Hence the square (2) commutes. According to Definition \ref{DefDesuspension}, the desuspension map $\omega_n:\MM_{n-1}\simeq (\MM_n)_{-1}$ is induced by $\partial_\MM$. Thus ${\beta}$ is a morphism of homotopy modules.
\par Finally, when $X$ is the spectrum of a field, the map ${\beta}_X$ is an isomorphism and so ${\beta}$ is an isomorphism of homotopy modules.

\par 
Putting the second isomorphism $\beta$ with the first isomorphism $\alpha$ of Subsection \ref{FirstIsomArticle2}, we have proved Theorem \ref{ThmDeg}.

\section{Applications} \label{Applications}

\subsection{Hermitian K-theory and Witt groups}

We assume that the characteristic of $k$ is different from $2$.
\par In \cite{Ati66}, Atiyah studied the topological K-theory of $\ZZ/2$-bundles on spaces with involution, expanding what we knew about real topological K-theory. The algebraic analogue is called {\em Hermitian K-theory} and was first introduced by Karoubi (see e.g. \cite{Kar80, Kar80bis}). A natural question was to translate this notion into the work of Morel and Voevodsky.
\par In \cite{Hor05}, Hornbostel proved that hermitian K-theory is representable in the stable homotopy category of Morel and Voevodsky. Precisely, there is a motivic $(8,4)$-periodic spectrum representing hermitian K-theory over the field $k$.  
\par Moreover, the theory of quadratic forms was studied by Balmer. In particular, he introduced a graded $4$-periodic generalization $W^*_B$ of Witt groups (with the classical Witt groups standing in degree $0$, see \cite{Bal00,Bal01} for more details). Similarly, Hornbostel proved that there is a spectrum whose homotopy groups coincide with the groups $W^*_B$.
\par Thus, according to Theorem \ref{ThmDeg},  we have the following theorem.
\begin{The}\label{KHisModule} 
There exist Milnor-Witt cycle modules $\KO$ and $\KW$ respectively associated to Hermitian K-theory and Balmer Witt groups in a canonical way.
\end{The}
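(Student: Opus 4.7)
The strategy is essentially mechanical given what has already been established. The external input needed is Hornbostel's theorem \cite{Hor05} producing motivic ring spectra in $\SH(k)$ whose cohomology recovers hermitian K-theory (the $(8,4)$-periodic spectrum $\mathbf{BO}$) and Balmer's graded Witt groups ($\mathbf{KW}$). With these in hand, the cycle modules $\KO$ and $\KW$ are \emph{defined} as the images of $\mathbf{BO}$ and $\mathbf{KW}$ under the functor $\SH(k) \to \MW_k$ constructed in Theorem \ref{FunctorSHtoMW}, namely $\KO := \widehat{\mathbf{BO}}$ and $\KW := \widehat{\mathbf{KW}}$.

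Concretely, unwinding the definition from Subsection \ref{MWassociated}, for a finitely generated field extension $E/k$ and a virtual vector bundle $\VV_E$ of rank $r$, one sets
\[
\KO(E,\VV_E) = \mathbf{BO}^{-r}(\Spec E,\VV_E) = \Hom_{\SH(E)}(\un_E, \mathbf{BO}_E \otimes \Tho_E(\VV_E)[-r]),
\]
and similarly for $\KW$. By Hornbostel's representability result these groups recover, up to the usual twists by determinants via Lemma \ref{LemTrivialization}, the hermitian K-groups and Balmer Witt groups of $E$. The four pieces of structure \ref{itm:D1}--\ref{itm:D4} and all the rules \ref{itm:R1a}--\ref{itm:R4a}, as well as axioms \ref{itm:FD} and \ref{itm:C}, hold automatically: they were checked in Subsections \ref{MWassociated} for an arbitrary motivic ring spectrum, so no spectrum-specific verification is required.

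The only substantive point would be to identify the structural maps on these cycle modules with the ones known classically on hermitian K-theory and Witt groups: restriction with pullback along field extensions, corestriction with the geometric trace (Scharlau transfer in the Witt case), residues with the classical second residues, and the $\KMW$-action with multiplication by Milnor--Witt symbols. These identifications all follow from the naturality of the six-functor formalism together with the compatibilities stated in Section \ref{BivTheory}; in the Witt case, one further observes that the action factors through the quotient $\KMW \twoheadrightarrow \mathbf{K}^{\mathrm{W}}$, reflecting the vanishing of the hyperbolic form.

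There is no real obstacle here: Theorem \ref{FunctorSHtoMW} has done all of the heavy lifting, and the statement of Theorem \ref{KHisModule} is then simply an instantiation at two specific motivic spectra. Alternatively, invoking Theorem \ref{ThmDeg} one may equivalently describe $\KO$ and $\KW$ as the MW-cycle modules corresponding under the equivalence $\MW_k \simeq \SH(k)^\heartsuit$ to the homotopy modules $\pizero(\mathbf{BO})$ and $\pizero(\mathbf{KW})$, yielding the same objects.
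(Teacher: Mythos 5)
Your proposal is correct and matches the paper's intended argument: the paper simply invokes Hornbostel's representability results together with Theorem~\ref{ThmDeg} (equivalently, the functor of Theorem~\ref{FunctorSHtoMW}), which is exactly the route you take, and as you note the two formulations produce the same cycle modules. Your additional remarks about identifying the structural maps with the classical transfers and second residues, and about the $\KMW$-action on $\KW$ factoring through $\mathbf{K}^{\mathrm{W}}$, go beyond what the theorem statement requires but are consistent with the paper's framework.
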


\subsection{Monoidal structure, adjunction and equivalences of categories}
Recall that a Grothendieck category is an abelian category $\CCC$ with (infinite) coproducts (hence, all colimits) such that filtered colimits of exact sequences are exact and admitting a generator, that is, an object $G\in \CCC$ such that the functor $\Hom_{\CCC}(G,-)$ is faithful.
\par Thanks to Theorem \ref{ThmDeg}, we can transpose known properties from the category of homotopy modules to the category of Milnor-Witt cycle modules:

\begin{The}
	The category $\MW_k$ of Milnor-Witt cycle modules is a Grothendieck category with products. Moreover, there is a canonical symmetric closed monoidal structure on $\MW_k$ such that the unit element is the cycle module $\KMW$. In addition, the monoidal tensor product commutes with the shifting functor defined in \cite[Example 4.7]{Fel18}. 
\end{The}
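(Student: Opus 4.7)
The plan is to transport all the structures from the heart $\SH(k)^\heartsuit$ to $\MW_k$ via the equivalence of categories established in Theorem \ref{ThmDeg}. First I would recall that the stable homotopy category $\SH(k)$ is a presentable stable $\infty$-category equipped with a compactly generated t-structure (the homotopy t-structure), and for any such t-structure the heart is automatically a Grothendieck abelian category: it has all small (co)limits inherited from the truncation functors, filtered colimits are exact because they are so in $\SH(k)$ and $\tau_{\leq 0}$ preserves them, and a set of generators is obtained by applying $\pizero$ to the compact generators $\Sigma^{p,q}\un_k$. Pulling this structure back along the equivalence $\mathfrak{M}^{MW}_k \simeq \SH(k)^\heartsuit$ yields that $\MW_k$ is Grothendieck with all products.

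For the monoidal structure, the strategy is to use the fact that $\SH(k)$ carries a canonical symmetric monoidal structure $\otimes$ whose unit is the sphere spectrum $\un_k$, and that this t-structure is compatible with $\otimes$ in the sense that the tensor product of two connective objects is connective. This is exactly what is needed to define a symmetric monoidal structure on the heart by the formula
\[
\MM \otimes^\heartsuit \NN := \pizero(\MM \otimes \NN),
\]
with associator and symmetry constraints induced from those of $\SH(k)$ (by applying $\pizero$). The unit is $\pizero(\un_k)$, which under the equivalence corresponds to $\KMW$ by Example \ref{MainExampleCoh} (and Theorem \ref{FunctorSHtoMW} applied to $\un_k$). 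For the closedness, one defines an internal Hom on $\MW_k$ by $\underline{\Hom}^\heartsuit(\MM,\NN) := \pizero(\underline{\Hom}(\MM,\NN))$ using the internal Hom of $\SH(k)$ and the fact that the heart inclusion admits a left adjoint; the adjunction $\Hom(\MM\otimes^\heartsuit\NN,\mathcal{P}) \simeq \Hom(\MM,\underline{\Hom}^\heartsuit(\NN,\mathcal{P}))$ is then inherited. Transporting via the equivalence gives the desired closed symmetric monoidal structure on $\MW_k$ with unit $\KMW$.

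For the final assertion about compatibility with the shifting functor $M\mapsto M\{n\}$ of \cite[Example 4.7]{Fel18}, I would identify this shift on $\MW_k$ with the autoequivalence induced by $-\otimes \pizero(\Sigma^{\infty}\Gm^{\wedge n})$ on $\SH(k)^\heartsuit$ (equivalently, the $\Gm$-suspension on homotopy modules under Morel's equivalence $\SH(k)^\heartsuit \simeq \HM(k)$ from \ref{MorelEquiv}). Since tensoring with an invertible object of $\SH(k)^\heartsuit$ obviously commutes with the tensor product by functoriality and associativity of $\otimes$, the corresponding statement on $\MW_k$ follows by transport.

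The main obstacle in this proposal is genuinely verifying that the equivalence of Theorem \ref{ThmDeg}, which was constructed at the level of underlying abelian categories via the explicit constructions $\EE\mapsto \EEE$ and $M\mapsto F^M$, transports the monoidal structure to one with a concrete, workable description at the level of Milnor-Witt cycle data (restriction, corestriction, residue and $\KMW$-action). All the abstract properties follow formally, but one may wish to have an intrinsic description of $\otimes^\heartsuit$ in terms of the Rost-style axioms of \cite{Fel18}; our argument here treats the monoidal structure as defined by transport of structure, which is enough to establish existence and the listed properties.
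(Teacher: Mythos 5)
Your proposal is correct and follows essentially the same route as the paper: the paper gives no independent proof of this theorem, stating only that the properties are transported from the category of homotopy modules (equivalently $\SH(k)^\heartsuit$) via the equivalence of Theorem \ref{ThmDeg}. Your elaboration — Grothendieck structure from the compactly generated t-structure, monoidal structure on the heart via $\pizero(-\otimes-)$ with unit $\pizero(\un_k)\simeq\KMW$, and the shift identified with tensoring by the invertible object $\pizero(\Sigma^\infty\Gm^{\wedge n})$ — is precisely what "transposing known properties" amounts to, and your closing caveat about the absence of an intrinsic description of $\otimes^\heartsuit$ in Rost-style terms is a fair observation but does not affect the validity of the argument.
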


Using the theory of framed correspondences, Ananyevskiy and Neshitov constructed Milnor-Witt transfers on homotopy modules, proving this way that the hearts of the homotopy t-structures on the stable $\AAA^1$-derived category and the category of Milnor-Witt motives are equivalent \cite{Neshitov2018}. Assuming $k$ to be an infinite perfect field of characteristic not two, their proof relies on the work of Garkusha and Panin \cite{GarkushaPanin14,GarkushaPanin18}. Similarly, one could give another proof of this fact:

\begin{The} \label{ThmAnaNeshi}
The category of Milnor-Witt cycle modules is equivalent to the heart of the category of MW-motives (equipped with the homotopy t-structure):
\begin{center}

$ \mathfrak{M}^{MW}_k \simeq \DMt(k)^\heartsuit$.
\end{center}
In particular, the heart of Morel-Voevodsky stable homotopy category is equivalent to the heart of the category of MW-motives \cite{DegFas18} (both equipped with their respective homotopy t-structures):
\begin{center}
$\SH(k)^\heartsuit \simeq \DMt(k)^\heartsuit$.

\end{center}

\end{The}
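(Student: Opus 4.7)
The plan is to exploit Theorem \ref{ThmDeg} as the main input and to produce the equivalence $\mathfrak{M}^{MW}_k \simeq \DMt(k)^{\heartsuit}$ directly; combining the two equivalences will give the corollary $\SH(k)^{\heartsuit} \simeq \DMt(k)^{\heartsuit}$ immediately. Recall that, by construction of Déglise--Fasel, the triangulated category $\DMt(k)$ is obtained from the site $\Cortilde_k$ of smooth $k$-schemes with finite MW-correspondences through the usual Nisnevich localization, $\AAA^1$-localization and $\Gm$-stabilisation, and comes with a canonical adjunction $\gamma^* : \SH(k) \rightleftarrows \DMt(k) : \gamma_*$ whose right adjoint $\gamma_*$ is conservative and t-exact for the homotopy t-structures on both sides. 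Consequently $\gamma_*$ induces $\gamma_*^{\heartsuit} : \DMt(k)^{\heartsuit} \to \SH(k)^{\heartsuit} \simeq \mathfrak{M}^{MW}_k$, and the task reduces to upgrading this functor to an equivalence.

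First I would construct a functor $\Phi : \mathfrak{M}^{MW}_k \to \DMt(k)^{\heartsuit}$ as follows. Given a MW-cycle module $M$, Theorem \ref{HomotopyModuleFM} provides the homotopy module $F^M$ with $F^M_n(X) = A^0(X,M,-\Om_{X/k}+\ev{n})$. I would endow $F^M$ with an action of $\Cortilde_k$ by defining, for any finite MW-correspondence $\alpha \in \Cortilde(X,Y)$ represented by an integral subscheme $Z \subset X \times Y$ finite and surjective over a component of $X$ and a class $q \in \KMW_0$ of the residue extension, the transfer $\alpha^* : F^M(Y) \to F^M(X)$ as the composite
\[
F^M(Y) \xrightarrow{\,p_Y^*\,} F^M(Z) \xrightarrow{\,\gamma_q\,} F^M(Z) \xrightarrow{\,p_{X,*}\,} F^M(X),
\]
using the essentially smooth pullback from \cite[§4.5]{Fel18}, the $\KMW$-action (D3), and the proper pushforward (D2) from the five basic maps in paragraph \ref{FiveBasicMapsArticle2}. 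One then checks, using the rules (R1a)--(R4a), that this assignment is additive in $\alpha$ and respects the composition of MW-correspondences, so that $F^M$ indeed becomes an object of $\DMt(k)^{\heartsuit}$.

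For fully faithfulness of $\Phi$, a morphism of MW-cycle modules $M \to M'$ induces a morphism of the associated homotopy sheaves $F^M \to F^{M'}$ compatible (by naturality of the five basic maps) with all MW-transfers. Conversely, a morphism of the associated sheaves with MW-transfers restricts on generic points to morphisms $M(E,\VV_E) \to M'(E,\VV_E)$ which, because each datum (D1)--(D4) is itself built from the five basic maps (as is explained on the spectrum side in Subsection \ref{FirstIsomArticle2}), must be compatible with the whole cycle module structure. Essential surjectivity will then be automatic: given $\mathcal{F} \in \DMt(k)^{\heartsuit}$, its image $\gamma_*^{\heartsuit}(\mathcal{F})$ corresponds via Theorem \ref{ThmDeg} to a MW-cycle module $M$, and the transfer structure of $\mathcal{F}$ is reflected on residue fields precisely by the corestriction data (D2) of $M$, so that $\mathcal{F} \simeq \Phi(M)$.

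The main obstacle is the verification in the second step that the transfers $\alpha^*$ defined above are compatible with the composition law in $\Cortilde_k$. Concretely, given composable MW-correspondences $\alpha \in \Cortilde(X,Y)$ and $\beta \in \Cortilde(Y,Z)$, the intersection product defining $\beta \circ \alpha$ on $X \times Y \times Z$ must translate, after pullback and pushforward, into the composite of transfers $\alpha^* \circ \beta^*$ on $F^M$. The essential inputs are the base change property (R1c) for the (possibly non-transverse) intersection of the supports of $\alpha$ and $\beta$, the ramification formula (R3a) to absorb the defect of transversality (which matches the content of Theorem \ref{RamificationGeom}), and the projection formulae (R2a)--(R2c) to track the quadratic twists; once these identities are packaged together, the remaining verifications are formal and the proof is complete.
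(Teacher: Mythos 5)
Your plan follows the same architecture as the paper's proof: pass from $M$ to the homotopy module $F^M$ via Theorem~\ref{HomotopyModuleFM}, equip $F^M$ with Milnor--Witt transfers, then propagate the argument from Theorem~\ref{ThmDeg}. The essential difference is that the paper discharges the transfer construction by invoking a structural theorem from \cite{Fel20} (namely that a homotopy module with a $\KMW$-action automatically admits MW-transfers), whereas you attempt to build the transfers by hand from (D1), (D2), (D3). That is a legitimate alternative route, but your construction as written has a genuine gap: the middle term $F^M(Z)$ in your formula $p_{X,*}\circ\gamma_q\circ p_Y^*$ does not make sense when $Z$ is singular, since $F^M$ is a sheaf on $\Sm$. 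Moreover, even when $Z$ is normal, the projection $p_Y:Z\to Y$ need not be (essentially) smooth, so the pullback of \cite[\S4.5]{Fel18} that you cite does not apply to it. The standard fix is to define the transfer at the level of generic points using (D2), and to use the Gersten-type resolution to show this extends uniquely to a map of sheaves --- i.e. to reproduce the content of \cite[\S3.2]{Fel20} rather than appealing to it, and this requires more work than the sketch acknowledges (normalization of $Z$, well-definedness modulo the relations defining $\CHt^{\dim Y}_T(X\times Y,\omega_Y)$, and the composition law verified along the lines you describe with (R1c), (R3a), (R2)). Your essential-surjectivity argument is also compressed: knowing that $\gamma_*^\heartsuit(\mathcal{F})$ and $\gamma_*^\heartsuit(\Phi(M))$ agree as homotopy modules does not by itself give $\mathcal{F}\simeq\Phi(M)$; one must actually verify that the two transfer structures on the common underlying sheaf coincide, which again comes down to checking equality on residue fields. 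None of this is unfixable, but the bulk of the real work lies precisely in the parts you label "formal."
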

\begin{proof} Let $M$ be a Milnor-Witt cycle module. It corresponds to a homotopy module $F^M$ according to the previous section. Since we have an action of the Milnor-Witt K-theory on $F^M$, we can prove (as in \cite[§3.2]{Fel20}) that $F^M$ has in fact MW-transfers (see \cite[§6.1]{Fel20} for the definition of homotopy modules with MW-transfers). We can then proceed as in the proof of Theorem \ref{ThmDeg} to prove the first equivalence of categories. The second equivalence follows from Theorem \ref{ThmDeg}.
\end{proof}

In his thesis \cite{Deg03}, Déglise studied  the category of homotopy modules with transfers which is known to be equivalent to the heart of the category of Voevodsky’s motives $\DM(k,\ZZ)$ (with respect to the homotopy t-structure). Déglise's main theorem was that this category can be described with Rost's theory of cycle modules. This fact could be rediscovered thanks to our previous results:
\begin{The}[Déglise]\label{theseDeglise}
Let $k$ be a perfect field. The category of Rost cycle modules over $k$ is equivalent to the heart of the category of Voevodsky’s motives $\DM(k,\ZZ)$ with respect to the homotopy t-structure:
\begin{center}
$\mathfrak{M}^{\operatorname{M}}_k \simeq\DM(k,\ZZ)^\heartsuit.$
\end{center}
\end{The}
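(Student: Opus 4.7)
The plan is to deduce this from Theorem \ref{ThmDeg} by identifying, on both sides of the equivalence $\mathfrak{M}^{MW}_k \simeq \SH(k)^\heartsuit$, the full subcategories corresponding to $\DM(k,\ZZ)^\heartsuit$ and to Rost's cycle modules, and then showing they match. Recall Morel's result that $\DM(k,\ZZ)^\heartsuit$ is equivalent to the subcategory of \emph{oriented} homotopy modules inside $\HM(k)$, i.e.\ those homotopy modules on which the Hopf element $\eeta$ acts by zero (equivalently, $H\ZZ$-modules in $\SH(k)^\heartsuit$). Under the equivalence $\pizero \colon \SH(k)^\heartsuit \simeq \HM(k)$ and the further equivalence with $\mathfrak{M}^{MW}_k$ established in Theorem \ref{ThmDeg}, I would identify the oriented homotopy modules with those Milnor-Witt cycle modules $M$ for which the action $\gamma_\eeta$ of Section \ref{HMandMW} is identically zero.

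Next I would check that the subcategory of $\mathfrak{M}^{MW}_k$ consisting of modules on which $\eeta$ acts trivially is canonically equivalent to the category $\mathfrak{M}^{M}_k$ of Rost cycle modules. This is essentially a matter of unwinding definitions: killing $\eeta$ in $\KMW$ produces the Milnor K-theory sheaf $\mathbf{K}^M$ (since $\KMW/\eeta \simeq \mathbf{K}^M$), so an $\eeta$-trivial $\KMW$-action becomes a $\mathbf{K}^M$-action, the twist by virtual vector bundles collapses to a $\ZZ$-grading (because $\langle u\rangle = 1$ in $\mathbf{K}^M$, so Lemma \ref{LemTrivialization} reduces the dependence on $\det(\VV_E)$ to a dependence on $\rk(\VV_E)$ only), and Morel's rule \ref{itm:R4a} becomes trivial. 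The five axioms (D1)--(D4) and the rules (R1a)--(R4a), (FD), (C) then specialise term-by-term to Rost's axioms \cite{Rost96}, and conversely every Rost cycle module is obtained this way. This gives an equivalence $\{\,M\in \mathfrak{M}^{MW}_k \mid \gamma_\eeta=0\,\}\simeq \mathfrak{M}^M_k$.

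Combining these two identifications yields the chain
\[
\mathfrak{M}^{M}_k \;\simeq\; \{\,M\in \mathfrak{M}^{MW}_k \mid \gamma_\eeta=0\,\} \;\simeq\; \{\text{oriented homotopy modules}\} \;\simeq\; \DM(k,\ZZ)^\heartsuit,
\]
the last equivalence being Morel's comparison between oriented homotopy modules and the heart of Voevodsky's category. The bulk of the work consists in verifying that the equivalence from Theorem \ref{ThmDeg} restricts correctly and that the passage from $\KMW$ to $\mathbf{K}^M$ really does recover Rost's data and axioms unchanged; this is bookkeeping, easy once one has the dictionary of Section \ref{MWassociated}.

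The main obstacle, and the only non-formal input, is the identification $\DM(k,\ZZ)^\heartsuit \simeq \{\text{oriented homotopy modules}\}$. This relies on Morel's structural theorem on the homotopy t-structure together with the comparison between $\SH(k)$-modules killed by $\eeta$ and $\DM(k,\ZZ)$; I would invoke it as a known result rather than reprove it. Given that, the theorem is essentially a corollary of Theorem \ref{ThmDeg} together with the elementary algebraic observation that Rost cycle modules are exactly the $\eeta$-trivial Milnor-Witt cycle modules.
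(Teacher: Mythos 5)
Your proposal is correct and follows essentially the same strategy as the paper: the decisive observation in both is that Rost cycle modules are exactly the Milnor--Witt cycle modules on which $\eeta$ acts trivially, and the rest is a chain of known equivalences. The only difference is the intermediate category: you pass through $\SH(k)^{\heartsuit}\simeq \HM(k)$ via Theorem~\ref{ThmDeg} and Morel's identification of oriented homotopy modules with $\DM(k,\ZZ)^{\heartsuit}$, whereas the paper passes through $\DMt(k)^{\heartsuit}$ via Theorem~\ref{ThmAnaNeshi} and the identification of $\eeta$-trivial homotopy modules with MW-transfers with $\DM(k,\ZZ)^{\heartsuit}$; since $\SH(k)^{\heartsuit}\simeq\DMt(k)^{\heartsuit}$ is itself Theorem~\ref{ThmAnaNeshi}, the two routes are equivalent and your version is, if anything, slightly more direct because it invokes only the main theorem.
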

\begin{proof}
One can see that the category of Rost cycle modules is equivalent to the full subcategory of $\MW_k$ of Milnor-Witt cycle modules with trivial action of the generator $\eeta$. Thanks to Theorem \ref{ThmAnaNeshi}, this subcategory is equivalent to the full subcategory of $\DMt(k)^\heartsuit$ of homotopy modules with transfers and with trivial action of the Hopf map $\eeta$. This last category is equivalent to $\DM(k,\ZZ)^\heartsuit$.
\end{proof}

\paragraph{Adjunction between MW-cycle modules and Rost cycle modules}
\par  Consider $M$ a classical cycle module (\textit{à la Rost}, see \cite[§1]{Rost96}). Recall that we may define a Milnor-Witt cycle module $\Gamma_*(M)$ as follows. Let $(E,\VV_E)$ be in $\mathfrak{F}_k$ and put
\begin{center}

$\Gamma_*(M)(E,\VV_E)=M(E,\rk \VV_E)$.
\end{center}
We can check that this defines a fully faithful exact functor
\begin{center}

$\Gamma_*:\mathfrak{M}^{\operatorname{M}}_k\to \mathfrak{M}^{\operatorname{MW}}_k$.
\end{center}
where $\mathfrak{M}^{\operatorname{M}}_k$ (resp. $\mathfrak{M}^{\operatorname{MW}}_k$) is the category of Rost cycle modules (resp. Milnor-Witt cycle modules). This definition leads to the following theorem:

\begin{The}[Adjunction Theorem]\label{AdjunctionTheoremBis}

There is an adjunction between the category of Milnor-Witt cycle modules and the category of classical cycle modules:
\begin{center}
$\mathfrak{M}^{\operatorname{MW}}_k \rightleftarrows \mathfrak{M}^{\operatorname{M}}_k$.

\end{center}
\end{The}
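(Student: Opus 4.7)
The plan is to exhibit $\Gamma_*$ as the right adjoint of an explicit left adjoint $\Gamma^\sharp:\mathfrak{M}^{\operatorname{MW}}_k \to \mathfrak{M}^{\operatorname{M}}_k$ obtained by killing the action of the Hopf element $\eeta$. Concretely, for a Milnor-Witt cycle module $M$, I would set
\[
(\Gamma^\sharp M)(E,n) := M(E,\ev{n})\,/\,\gamma_{\eeta}\bigl(M(E,\ev{n+1})\bigr).
\]
By Lemma \ref{LemTrivialization} together with the identity $\langle u\rangle = 1 + \eeta[u]$ in $\KMW$, the canonical action of $E^{\times}$ on $M(E,\ev n)$ becomes trivial after quotienting by $\eeta$; hence any two choices of trivialization of $\det(\VV_E)$ induce the same isomorphism $M(E,\VV_E)/\eeta \simeq M(E,\ev n)/\eeta$ (using rule (R4a)), so $\Gamma^\sharp M$ really is a $\ZZ$-graded theory on $\mathfrak{F}_k$.

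Next I would verify that $\Gamma^\sharp M$ inherits a classical Rost cycle module structure. The data (D1)-(D4) descend to the quotient because rules (R2a)-(R2c) and (R3b)-(R3e) assert that restrictions, corestrictions, residues and multiplication by units all commute with $\gamma_{\eeta}$ up to signs that themselves vanish modulo $\eeta$. The residual $\KMW$-action then factors through the quotient $\KMW/\eeta \simeq \mathbf{K}^{M}$, which is precisely the Milnor K-theory action required on a Rost cycle module. The axioms (FD) and (C) are inherited immediately since they are point-wise statements. For the ramification axiom, one uses that the quadratic coefficient $e_{\epsilon}\in \KMW(\kappa(w))$ appearing in rule (R3a) reduces to the integer $e$ modulo $\eeta$, thereby recovering Rost's classical ramification formula.

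The adjunction isomorphism
\[
\Hom_{\mathfrak{M}^{\operatorname{M}}_k}(\Gamma^\sharp M, N) \simeq \Hom_{\mathfrak{M}^{\operatorname{MW}}_k}(M, \Gamma_* N)
\]
is then formal: since $\eeta$ acts trivially on $\Gamma_* N$ by construction, any morphism $\phi:M\to \Gamma_* N$ of Milnor-Witt cycle modules factors uniquely through the quotient $\Gamma^\sharp M$. The unit is the canonical projection $M\to \Gamma_*\Gamma^\sharp M$, and the counit $\Gamma^\sharp \Gamma_* N\to N$ is an isomorphism since $\eeta$ already kills $\Gamma_* N$. The two triangle identities are then immediate.

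The main obstacle I expect is the compatibility of the quotient with the twist structure: one must check that the collapse from virtual-bundle grading to $\ZZ$-grading is canonical and compatible with every item of data, not just the $\KMW$-action. This uses (R4a) crucially, since automorphisms of a virtual space act by Milnor-Witt units $\langle u\rangle$ that become $1$ modulo $\eeta$; and it uses the explicit identification $\KMW/\eeta \simeq \mathbf{K}^M$ to make sure the residue maps in the quotient match Rost's residues in the classical sense. Once these compatibilities are unpacked, the rest of the verification is a routine diagram chase through the rules (R1a)-(R4a).
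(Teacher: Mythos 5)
Your construction is a correct explicit realization of the adjunction: $\Gamma^\sharp M = M/\eeta M$ (pointwise, after collapsing the twist grading to $\ZZ$ via (R4a) and $\langle u\rangle \equiv 1 \bmod \eeta$) is well defined as a Rost cycle module because $\eeta$ is \emph{central} in $\KMW$ and commutes with each of (D1)--(D4) by (R2a), (R2b), (R3e); hence the pointwise image of $\gamma_\eeta$ is already a sub-cycle-module, the quotient makes sense object by object, and (FD), (C) are inherited by lifting to $M$ and projecting. The reductions $e_\epsilon \equiv e \bmod \eeta$ and $\KMW/\eeta\simeq \mathbf{K}^M$ then recover Rost's axioms, and the adjunction isomorphism is the universal property of the quotient exactly as you say. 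This is in all likelihood the same construction as the ``elementary proof with an explicit description of the adjoint functors'' that the paper defers to \cite[Section 12]{Fel18}, so you are essentially reproducing the cited route; the one thing you use but should state is the centrality of $\eeta$, without which the image of $\gamma_\eeta$ need not be closed under (D3).

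The proof the paper actually displays is the second, more abstract option: combine Theorem \ref{ThmDeg} ($\mathfrak{M}^{\operatorname{MW}}_k\simeq \SH(k)^\heartsuit$) with Theorem \ref{theseDeglise} ($\mathfrak{M}^{\operatorname{M}}_k\simeq \DM(k,\ZZ)^\heartsuit$) and transport the free/forgetful adjunction $\gamma^* \dashv \gamma_*$ between $\SH(k)$ and $\DM(k,\ZZ)$ to the hearts. That route sidesteps all axiom-checking at the cost of invoking both heavy equivalence theorems, whereas your route is self-contained and makes the unit and counit concrete, at the price of the diagram chase through (R1a)--(R4a), (FD), and (C) that you outline.
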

\begin{proof}
We gave an elementary proof of this result in \cite[Section 12]{Fel18} with an explicit description of the adjoint functors. For a second proof, combine Theorem \ref{ThmDeg} and Theorem \ref{theseDeglise}.
\end{proof}

\subsection{Birational invariance}

Studying unramified cohomology groups with $\ZZ/2$-coefficients, one can see that an elliptic curve is not birational to the projective line. More generally, étale cohomology and K-theory are a source of such birational invariants (see \cite{Col92} for more details).

\par Rost proved in \cite[Corollary 12.10]{Rost96} that, if $X$ is a proper smooth variety over $k$ and $M$ a cycle module, then the group $A^0(X,M)$ is a birational invariant of $X$. A natural question is to extend this for Milnor-Witt modules, hoping that the quadratic nature of our theory will lead to more refined (birational) invariants and thus sharper theorems. Rost's proof heavily depends on the existence of pullback maps for flat morphisms. Unfortunately, such pullback maps remain to be constructed in our setting. Nevertheless, a different method yields the expected result:

\begin{The} \label{BirInv}
	Let $X$ be a proper smooth integral scheme over $k$, let $\VV_k$ a virtual vector bundle over $k$ and let $M$ be a Milnor-Witt cycle module. Then the group $A^0(X,M,-\Om_{X/k}+\VV_X)$ is a birational invariant of $X$ in the sense that, if $X\dashrightarrow Y $ is a birational map, then there is an isomorphism of abelian groups
\begin{center}
$A^0(Y,M,-\Om_{Y/k}+\VV_Y) \to A^0(X,M,-\Om_{X/k}+\VV_X)$.
\end{center}
In particular for $M=\KMW$, we obtain the  fact that the Milnor-Witt K-theory groups $\kMW_n$ are birational invariants.
\end{The}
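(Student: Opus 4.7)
The plan is to transport the problem through the equivalence of Theorem \ref{ThmDeg} to a birational invariance statement for homotopy sheaves, and then prove the latter by characterizing $F(X)$ as the subgroup of unramified elements inside $F(\kappa(X))$.

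First, I would reduce to homotopy sheaves. Since $\Spec k$ has trivial Picard group, the line bundle $\det \VV_k$ is free, so $\VV_k \simeq \ev{n}$ (with $n = \rk \VV_k$) after a choice of basis; any two such trivializations differ by a unit $u \in k^\times$, whose action (by rule \ref{itm:R4a}, as in Lemma \ref{LemTrivialization}) identifies the twisted cohomology groups canonically up to isomorphism. Base-changing to $X$, one may assume $\VV_X = \ev{n}$. Theorem \ref{HomotopyModuleFM} then identifies $A^0(X,M,-\Om_{X/k}+\ev{n})$ with $F^M_n(X)$, where $F^M$ is the homotopy module associated to $M$; since each $F^M_n$ lies in $\HI(k)$, the first assertion reduces to the ``moreover'' clause.

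Next, I would prove birational invariance of $F(X)$ for $F \in \HI(k)$ by establishing the following intrinsic characterization: for $X$ smooth proper integral, the Gersten injection $F(X) \hookrightarrow F(\kappa(X))$ (whose image is $\bigcap_{x \in X^{(1)}} \ker \partial_x$) has image equal to
\[
F_{\mathrm{unr}}(\kappa(X)) \;:=\; \bigcap_{v} \ker\bigl(\partial_v \colon F(\kappa(X)) \to F_{-1}(\kappa(v))\bigr),
\]
where $v$ ranges over divisorial valuations of $\kappa(X)/k$ of geometric type. Since the right-hand side depends only on $\kappa(X)$, this yields the conclusion: for any birational map $X \dashrightarrow Y$ between smooth proper integral schemes, $F(X) = F_{\mathrm{unr}}(\kappa(X)) = F_{\mathrm{unr}}(\kappa(Y)) = F(Y)$ inside the common group $F(\kappa(X)) = F(\kappa(Y))$.

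The inclusion $F(X) \supseteq F_{\mathrm{unr}}(\kappa(X))$ is tautological, since codim $1$ points of $X$ provide divisorial valuations. For the reverse, given $f \in F(X)$ and a divisorial valuation $v$ on $\kappa(X)$, the valuative criterion of properness extends $\Spec \OO_v \to \Spec \kappa(X) \to X$ to $\Spec \OO_v \to X$, whose closed point lands at some $x \in X$. If $\codim_X x = 1$, then $v$ is the valuation at $x$ and $\partial_v(f) = 0$ by Gersten. Otherwise, I would invoke local uniformization of the divisorial valuation $v$ on the regular local ring $\OO_{X,x}$: this produces, after finitely many blowups along smooth centers, a proper birational $\pi \colon X' \to X$ with $X'$ smooth and $v$ centered at a codim $1$ point $y \in (X')^{(1)}$. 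The lci pullback $\pi^* f \in F(X')$ (which is the identity on function fields) then satisfies $\partial_v(f) = \partial_y(\pi^* f) = 0$ by Gersten on $X'$.

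The main obstacle is the local uniformization step: realizing a prescribed divisorial valuation as a codim $1$ divisor after finitely many blowups along smooth centers. This is classical, due essentially to Zariski, and applies in any characteristic for divisorial valuations (whose rank and rational rank are both equal to one), so no appeal to full resolution of singularities is needed. The remaining verifications---Gersten injectivity for homotopy sheaves, compatibility of lci pullback with residues, and the trivialization argument---are all available in the Milnor-Witt setting developed in \cite{Fel18}.
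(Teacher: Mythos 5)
Your route diverges substantially from the paper's. The paper's proof is short: it observes that for $F^M(-) = A^0(-,M,-\Om_{-/k}+\VV)$ the restriction $F^M(X)\to F^M(U)$ is injective for any open $U$ and an isomorphism whenever $\codim_X(X\setminus U)\geq 2$ (both from the localization long exact sequence), and that a birational map between smooth proper varieties is defined away from codimension $\geq 2$; composing the two induced maps and using injectivity then gives the isomorphism. No unramified-cohomology characterization, and no blowups.

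Your argument has a genuine gap at the local uniformization step. You assert that a prescribed divisorial valuation $v$ centred at $x\in X$ of codimension $\geq 2$ can be realized as a codimension-$1$ point of a \emph{smooth} proper $X'\to X$ obtained by finitely many blowups along \emph{smooth} centres, and that this is ``classical, due essentially to Zariski, and applies in any characteristic.'' That is not correct as stated: producing a global smooth model on which $v$ is divisorial, via smooth-centre blowups, is a weak form of resolution of singularities and is open in positive characteristic in dimension $\geq 4$. What \emph{is} available in all characteristics (Knaf--Kuhlmann, Temkin, and, for divisorial valuations, much more elementarily) is local uniformization, a statement about a single regular local ring dominated by $v$ — not a global smooth model. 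Since your argument needs $X'$ smooth so that $F(X')$ is defined and the Gersten resolution applies to $X'$, this is a real obstruction.

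Fortunately the step can be repaired in a way that makes the blowups superfluous. Since $v$ is a geometric valuation, $\OO_v$ is a noetherian DVR essentially of finite type over $k$; being regular and $k$ perfect, $\Spec\OO_v$ is essentially smooth over $k$ and hence a legitimate object on which $F$ is evaluated (the paper works throughout with essentially-of-finite-type schemes). The centre condition gives a dominating inclusion $\OO_{X,x}\subset\OO_v$, whence a restriction $F(X)\to F(\Spec\OO_{X,x})\to F(\Spec\OO_v)$; the image of $f$ lies in $F(\Spec\OO_v)=\ker\bigl(\partial_v\colon F(\kappa(X))\to F_{-1}(\kappa(v))\bigr)$, giving $\partial_v(f)=0$ directly. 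With this replacement your unramified characterization $F(X)=F_{\mathrm{unr}}(\kappa(X))$ goes through and yields the birational invariance, but the resulting argument is no shorter than, and structurally quite close to, the paper's codimension-$2$ argument, which also only uses the localization sequence and the valuative criterion of properness. (A minor further remark: the reduction from $\VV_k$ to $\ev{n}$ via Lemma \ref{LemTrivialization} is fine for producing \emph{an} isomorphism of groups, but is not needed for the paper's proof, which treats the twist uniformly.)
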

\begin{proof} (see also \cite[Lemma 1.3]{Voi19}).
	Denote by $F^M(X)=A^0(X,M,-\Om_{X/k}+\VV_X)$. This defines a contravariant functor that satisfies:
	\begin{enumerate}
	\item If $U\subset X$ is a Zariski open set, then the map $F^M(X)\to F^M(U)$ is injective,
\item 	If $U\subset X$ is a Zariski open set such that $\codim_X(X\setminus U)\geq 2$, then the map $F^M(X)\to F^M(U)$ is an isomorphism.\end{enumerate}
Indeed, these properties follow from the localization long exact sequence \cite[§6.4]{Fel18}.
\par Now let $\Phi: X \dashrightarrow Y $ be a birational map between smooth and proper integral schemes over $k$. Then there is an open set $U\subset X$ such that $\codim_X(X\setminus U)\geq 2$ and $\Phi_U$ is an morphism. Then we have $F^M(X)\simeq F^M(U)$ and, by functoriality, a morphism $\Phi_U^*:F^M(Y)\to F^M(U)$, hence a morphism $\Phi_*:F^M(Y)\to F^M(X)$. Replacing $\Phi$ by $\Phi^{-1}$, we get $\Phi_V^{-1}:F^M(X)\to F^M(V)$ for some Zariski open set $V$ of $Y$ such that $F^M(Y)\simeq F^M(V)$. Let $U'\subset U$ be defined as $\Phi_V^{-1}(V)$. Then $\Phi^{-1}\circ \Phi$ is the identity on $U'$, hence $(\Phi^{-1})_*\circ \Phi_* : F^M(X)\to F^M(X)$ is the identity. Since $F^M(X)\to F^M(U')$ is injective, we can conclude that $\Phi_*$ is an isomorphism.
\end{proof}
Recall that, by definition, a {\em homotopy sheaf} is a strictly $\AAA^1$-invariant Nisnevich sheaf of abelian groups over the category of smooth $k$-schemes; we denote by $\HI(k)$ the category of such sheaves. There is a canonical functor
\begin{center}
$\sigma^{\infty}:\HI(k) \to \mathfrak{M}_k^{\operatorname{MW}}$
\end{center}
thanks to Theorem \ref{ThmDeg}. The previous theorem can be generalized as follows:

\begin{The}
	Let $X$ be a proper smooth integral scheme over $k$. Let $F\in \HI(k)$ be a homotopy sheaf, then $F(X)$ is a birational invariant of $X$.
\end{The}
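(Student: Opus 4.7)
The plan is to reduce the claim to the previous theorem by showing that $F$ satisfies the same two structural properties as $F^M = A^0(-,M,-\Om_{-/k}+\VV_{-})$: namely, (i) $F(X) \to F(U)$ is injective for every nonempty open $U \subset X$, and (ii) $F(X) \to F(U)$ is an isomorphism whenever $\codim_X(X \setminus U) \geq 2$. In the proof of Theorem~\ref{BirInv} these two facts came out of the localization long exact sequence for Chow--Witt groups with coefficients; here the natural replacement is Morel's Gersten resolution.

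Concretely, I would invoke Morel's theorem \cite{Mor12} that every strictly $\AAA^1$-invariant Nisnevich sheaf over a perfect field admits, on each smooth $X/k$, a canonical exact Gersten complex
\[
0 \to F(X) \to \bigoplus_{x \in X^{(0)}} F(\kappa(x)) \xrightarrow{d^0_X} \bigoplus_{x \in X^{(1)}} F_{-1}(\kappa(x)) \xrightarrow{d^1_X} \bigoplus_{x \in X^{(2)}} F_{-2}(\kappa(x)) \to \cdots
\]
where $F_{-n}$ is the iterated contraction of Definition~\ref{DefDesuspension}. Property (i) is then immediate: both $F(X)$ and $F(U)$ embed, via the degree-zero term of the complex, into $F(\eta_X)$, so the restriction map is injective. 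For property (ii), removing a closed subscheme of codimension at least two leaves the points of codimension $0$ and $1$ untouched, so the first two terms of the Gersten complex of $X$ and of $U$ coincide; consequently $F(X) = \ker d^0_X = \ker d^0_U = F(U)$.

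With (i) and (ii) at hand, the proof of Theorem~\ref{BirInv} transfers word for word. Given a birational map $\Phi : X \dashrightarrow Y$, one picks open sets $U \subset X$ and $V \subset Y$ with complements of codimension $\geq 2$ on which $\Phi$ and $\Phi^{-1}$ are morphisms, uses (ii) to identify $F(X) \simeq F(U)$ and $F(Y) \simeq F(V)$, and then uses contravariant functoriality to produce maps $\Phi_* : F(Y) \to F(X)$ and $(\Phi^{-1})_* : F(X) \to F(Y)$. Finally, property (i) applied to a suitable open $U' \subset U$ where $\Phi^{-1} \circ \Phi$ restricts to the identity forces the two compositions to be the respective identities.

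The only non-formal ingredient is Morel's Gersten resolution, so I do not expect a serious obstacle; if one wished to avoid invoking it, an alternative would be to note that $F$ is the degree-zero component of a homotopy module, pass through $\sigma^{\infty}$ to the associated Milnor--Witt cycle module and apply Theorem~\ref{BirInv} directly, at the cost of tracking the extra twist by $\det \Om_{X/k}$.
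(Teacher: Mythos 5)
Your proposal is correct and follows essentially the same route as the paper: the paper's proof simply says it is the same as Theorem \ref{BirInv} and cites Morel's Gersten-resolution result (\cite[Cor.\ 6.4.6]{Mor05}), which is exactly what you use to establish injectivity into the generic stalk and codimension-$\geq 2$ purity before repeating the birational-map argument. Your spelled-out derivation of properties (i) and (ii) from the Gersten complex, and the remark about the alternative route via $\sigma^{\infty}$, are fine elaborations on what the paper leaves implicit.
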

\begin{proof}
The proof is the same as Theorem \ref{BirInv} thanks to \cite[Cor. 6.4.6]{Mor05}.
\end{proof}

  \bibliographystyle{alpha}
  \bibliography{exemple_biblio}


\end{document}